\definecolor{darkgreen}{rgb}{0,0.5,0}
\definecolor{darkred}{rgb}{0.7,0,0}
\theoremstyle{plain}
\newtheorem{lemma}{Lemma}[section]
\newtheorem{thm}[lemma]{Theorem}
\newtheorem{cor}[lemma]{Corollary}
\theoremstyle{definition}
\newtheorem{rmk}[lemma]{Remark}
\numberwithin{equation}{section}
\newcommand{\al}{\alpha}
\newcommand{\de}{\delta}
\newcommand{\De}{\Delta}
\newcommand{\Om}{\Omega}
\newcommand{\la}{\lambda}
\newcommand{\La}{\Lambda}
\newcommand{\si}{\sigma}
\renewcommand{\th}{\theta}
\newcommand{\ep}{\varepsilon}
\newcommand{\R}{\ensuremath{{\mathbb R}}}
\newcommand{\N}{\ensuremath{{\mathbb N}}}
\newcommand{\C}{\ensuremath{{\mathbb C}}}
\newcommand{\norm}[1]{\Vert#1\Vert}
\def\osc{\mathop{{\mathrm{osc}}}\limits}
\newcommand{\brmk}{\begin{rmk}}
\newcommand{\ermk}{\end{rmk}}
\newcommand{\partref}[1]{\hbox{(\csname @roman\endcsname{\ref{#1}})}}
\newcommand{\beq}{\begin{equation}}
\newcommand{\eeq}{\end{equation}}
\newcommand{\beqs}{\begin{equation*}}
\newcommand{\eeqs}{\end{equation*}}
\newcommand{\beqa}{\begin{equation}\begin{aligned}}
\newcommand{\eeqa}{\end{aligned}\end{equation}}
\newcommand{\beqas}{\begin{equation*}\begin{aligned}}
\newcommand{\eeqas}{\end{aligned}\end{equation*}}
\newcommand{\half}{\frac{1}{2}}
\newcommand{\thalf}{\tfrac{1}{2}} 
\newcommand{\dist}{\text{\textnormal{dist}}}
\renewcommand{\i}{\mathrm{i}}
\newcommand{\Mneu}{\widetilde{\mathcal{M}}}
\newcommand{\pt}{\partial_t}
\newcommand{\ddt}{\tfrac{d}{dt}}
\newcommand{\abs}[1]{{\vert#1\vert} }
\newcommand{\babs}[1]{{\big\vert#1\big\vert} } 
\newcommand{\eps}{\varepsilon}
\newcommand{\na}{\nabla}
\newcommand{\ddeps}{\tfrac{d}{d\eps}\vert_{\eps=0}}
\newcommand{\supp}{\text{supp}} 
\newcommand{\nto}{\nrightarrow}
\newcommand{\Var}{\mathcal{V}}
\newcommand{\Arg}{\text{Arg}}
\newcommand{\lan}{\langle}
\newcommand{\ran}{\rangle}
\newcommand{\Ima}{\text{Im}}
\newcommand{\Rea}{\text{\textnormal{Re}}}
\newcommand{\dbar}{\bar\partial}
\newcommand{\tens}{\Delta} 
\newcommand{\res}{\text{\textnormal{res}}} 
\title{{\sc
Analysis of boundary bubbles for almost minimal cylinders
}
\\ 
}
\author{Melanie Rupflin and Matthew R. I. Schrecker}
\date{\today}
\begin{document}
\begin{abstract}
We analyse the asymptotic behaviour of solutions of the Teichm\"uller harmonic map flow from cylinders, and more generally of `almost minimal cylinders', in situations where the maps satisfy a Plateau-boundary condition for which the three-point condition degenerates. 
We prove that such a degenerating boundary condition forces the domain to stretch out as a boundary bubble forms. 
Our main result then establishes that for prescribed boundary curves that satisfy Douglas' separation condition, these boundary bubbles 
will not only be harmonic but will themselves be branched minimal immersions. 
Together with earlier work, this in particular completes the proof that the Teichm\"uller harmonic map flow changes every initial surface in $\R^n$ spanning such boundary curves into a solution of the corresponding Douglas-Plateau problem. 
\end{abstract}

\maketitle

\section{Introduction and results}
Let $\Gamma^{\pm}$ be two disjoint $C^3$ Jordan curves in Euclidean space $\R^n$, $n\geq 3$, and let $u_0:C_0\to \R^n$ be any given map from the cylinder $C_0=[-1,1]\times S^1$ that spans $\Gamma^\pm$ in the sense that $u_0\vert_{\{\pm 1\}\times S^1}$ is a weakly monotone parametrisation of $\Gamma^\pm$. 

We consider a geometric flow, introduced by the first author in \cite{R-cyl}, that is designed to change such an initial map into a parametrisation of a minimal surface spanning $\Gamma^\pm$. 
This flow 
is modelled on the construction of Teichm\"uller harmonic map flow
for maps $u$ from arbitrary closed surfaces  $M$  into general  target manifolds $(N,g_N)$ 
 by P. Topping and the first author from \cite{RT}. The flow in \cite{RT} is defined as a natural gradient flow of the Dirichlet energy
$$E(u,g)=\half \int_M\abs{du}_g^2 \,dv_g$$
viewed as a function on the space of equivalence classes of pairs $(u,g)$ of maps $u:M\to (N,g_N)$ and metrics $g$ on $M$, identified under the symmetries of $E$, see \cite{RT} for details. The results of P. Topping and the first author \cite{RT}-\cite{RT-global}  establish that this flow decomposes every initial map $u_0:M\to N$ from a closed domain to a compact Riemannian manifold into critical points of the area functional, i.e.~into (possibly branched) minimal immersions. 

For the analogous flow of maps from cylinders to Euclidean space satisfying a Plateau-boundary condition, the results of \cite{R-cyl} establish the existence of a global weak solution for every initial data. In the case that the corresponding \textit{three-point condition} \eqref{def:three-standard} does not degenerate as $t\to \infty$, the results of \cite{R-cyl} furthermore ensure convergence along a sequence $t_i\to \infty$ to 
a critical point of the area functional that spans the prescribed boundary curves $\Gamma^\pm$ in the target, so that in these situations the flow changes the initial surface into (possibly branched) minimal immersions as desired. 

Here we investigate the remaining case of the asymptotic behaviour of the flow, i.e.~the case of a degeneration of the three-point condition. This is equivalent to more and more of the prescribed boundary curves $\Gamma^\pm$ in the target being parametrised over smaller and smaller subarcs of the corresponding boundary curves   of the domain. As we shall see, this forces \textit{harmonic bubble(s)} to form at the boundary of the domain, a phenomenon that is excluded for many similar geometric flows, such as the harmonic map flow with Dirichlet boundary condition or the flows of Chang-Liu \cite{CL1, CL2,CL3}, respectively Duzaar-Scheven \cite{D-S}, that change disc-type surfaces into minimal discs, respectively discs with prescribed mean curvature. 

While the bubbling behaviour of almost harmonic maps on closed domains has been the subject of intensive study over the past decades, see e.g. \cite{bubbles-1,bubbles-2,bubbles-3,Topping-bubbles,bubbles-Miaomiao} and the references therein,
 relatively little has been known about the formation and structure of boundary bubbles until very recently, when work of Jost, Liu and Zhu \cite{JLZ},  respectively Huang, Wang \cite{HW}, established the  energy identity and no-neck property 
for the bubble-tree convergence of approximate harmonic maps satisfying free-boundary, respectively weak and strong anchoring conditions, see also \cite{Laurain}.

While the present work is also concerned with the analysis of boundary bubbles, its focus is quite different as our main goal is to establish minimality of the obtained harmonic bubbles and as we deal with Plateau-boundary data for which a (degenerating) three-point condition is imposed. In contrast to free-boundary harmonic discs, which are always (branched) minimal immersions, the bubbles we obtain are in general not conformal and can indeed  a priori have poles in no fewer than four points. Excluding the formation of such poles will be a key step in the asymptotic analysis of the flow, and more generally the analysis of almost minimal maps.

For boundary curves that satisfy the \textit{separation condition}, which we introduce in \eqref{ass:no-linking} below and 
which is a weaker version of Douglas' separation condition that we recall in \eqref{eq:Douglas}, we obtain that the resulting bubbles formed by the flow are indeed minimal immersions provided the three-point condition does not degenerate too quickly, a condition which can be easily guaranteed by a suitable coupling of the equations of the flow, see \eqref{ass:coupling} below.

As a consequence of our analysis and the results of \cite{R-cyl}, we thus obtain in particular

\begin{thm}
Let $\Gamma^\pm\subset \R^n$ be any two disjoint $C^3$ Jordan curves satisfying the separation condition \eqref{ass:no-linking} and let $u_0\in H^1(C_0,\R^n)$ be any initial map which spans $\Gamma^{\pm}$ in the sense that $u_0\vert_{\{\pm1\}\times S^1}$ is a (weakly monotone) parametrisation of $\Gamma^\pm$, and let $g_0\in  \Mneu$ be any initial metric. 
\\
Then the global solution $(u,g)$ of the flow 
\eqref{eq:fluss-metric} and \eqref{eq:fluss-map} for coupling functions $\eta_\pm$ satisfying \eqref{ass:coupling} changes $u_0$ into either a minimal cylinder or into two minimal discs in the sense that there exist times $t_i\to \infty$ so that $u(t_i)$ converges to
(branched) minimal immersions 
$u_\infty:C_0\to \R^n$, respectively $\hat u_\infty^\pm:D\to \R^n$, 
spanning $\Gamma^\pm$ as is made precise in \cite[Theorem 2.7]{R-cyl}, respectively Theorem \ref{thm:1} below.
\end{thm}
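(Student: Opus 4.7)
The plan is to prove this by a case analysis depending on whether the three-point condition degenerates along the flow, invoking the existence theory from \cite{R-cyl} to provide the solution and then separating the two asymptotic regimes.

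First, I would appeal to the existence result of \cite{R-cyl} which, given the initial data $(u_0,g_0)$, produces a global weak solution $(u,g)$ of \eqref{eq:fluss-metric}--\eqref{eq:fluss-map} satisfying the Plateau boundary condition. With this solution in hand, the energy is nonincreasing along the flow and a standard argument yields a sequence $t_i\to\infty$ along which the tension field and the metric-equation residual both tend to zero in the appropriate norms, so that $(u(t_i),g(t_i))$ is an \emph{almost minimal cylinder} in the sense used in the rest of the paper. One then has two mutually exclusive behaviours along subsequences.

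In the first case the three-point condition \eqref{def:three-standard} remains nondegenerate along some such sequence. Here there is nothing to do that is not already covered by \cite[Theorem 2.7]{R-cyl}: the modulus of $g(t_i)$ stays in a compact region of moduli space, the Plateau condition passes to the limit, and one obtains a (branched) minimal immersion $u_\infty:C_0\to\R^n$ spanning $\Gamma^\pm$, giving the minimal cylinder alternative.

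In the second case the three-point condition degenerates as $t\to\infty$. This is the situation analysed throughout the present paper: the results for almost minimal cylinders (in particular Theorem \ref{thm:1}) show first that a degenerating three-point condition forces the conformal class of $g(t_i)$ to leave every compact subset of moduli space, so the cylinders pinch and separate into two disc components, and second that on each component a boundary bubble forms whose limit is a harmonic map from $D$ spanning the corresponding $\Gamma^\pm$. The separation condition \eqref{ass:no-linking} on the boundary curves and the coupling assumption \eqref{ass:coupling} on $\eta_\pm$, which limits how fast the three-point condition may degenerate, allow one to apply Theorem \ref{thm:1} to conclude that these bubbles $\hat u_\infty^\pm$ are in fact conformal (no poles form), hence are branched minimal immersions, yielding the two-minimal-disc alternative.

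The main obstacle is clearly the second case, and within it the step of upgrading the limiting harmonic bubbles to conformal, and therefore minimal, maps. A harmonic bubble obtained from an almost minimal cylinder is in principle only stationary for the Dirichlet energy with respect to the map variable, and its Hopf differential could a priori have poles at up to four boundary points coming from the three-point data and the pinching neck. The role of \eqref{ass:no-linking} is to exclude degenerate parametrisations in which $\Gamma^+$ and $\Gamma^-$ are spanned over overlapping or collapsed subarcs, and the role of \eqref{ass:coupling} is to provide the quantitative control on the metric-evolution residual needed to transfer the almost-conformality of $(u(t_i),g(t_i))$ to the bubble in the limit and thereby rule out poles. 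Granted Theorem \ref{thm:1}, the assembly of the two cases into the statement above is then routine.
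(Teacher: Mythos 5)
Your proposal follows the same route as the paper: the global existence theory of \cite{R-cyl} provides the flow, the energy inequality \eqref{eq:energy-cond} gives times $t_i\to\infty$ along which $(u(t_i),g(t_i))$ is almost minimal, and one splits on whether or not $\liminf_{t\to\infty}\max(|b^+|,|b^-|)=1$, invoking \cite[Theorem 2.7]{R-cyl} in the non-degenerate case and Theorem \ref{thm:1} in the degenerate case; the hypotheses \eqref{ass:no-linking} and \eqref{ass:coupling} enter exactly where you place them, namely to upgrade the boundary bubble from harmonic to conformal via Theorems \ref{thm:A} and \ref{thm:B}.

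One imprecision worth flagging in your first case: when the three-point condition stays non-degenerate it does \emph{not} follow that the conformal class of $g(t_i)$ stays in a compact region of moduli space. It may still happen that $\ell(t_i)\to 0$ while $|b^\pm(t_i)|$ remain bounded away from $1$, and in that sub-case \cite[Theorem 2.7]{R-cyl} produces two (branched) minimal discs rather than a minimal cylinder. The dichotomy ``minimal cylinder vs.\ two minimal discs'' in the theorem therefore does not coincide with your dichotomy ``three-point condition non-degenerate vs.\ degenerate''; both outcomes already arise in the non-degenerate case, and the degenerate case always gives the two-disc alternative. Since you do cite \cite[Theorem 2.7]{R-cyl} for the first case, the argument is not actually broken, but the explicit description of the first case should allow for both conclusions.
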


To state our main results in full detail, we first need to recall more details on the construction and properties of the flow from \cite{R-cyl}. As in the case of a closed domain, for $M=C_0$ a cylinder, the flow is defined as a gradient flow of the energy $E$ on the set of pairs of maps and metrics that are identified under the symmetries of $E$, i.e.~conformal invariance $E(u,g)=E(u,e^{2v} g)$ and invariance $E(u,g)=E(u\circ  f,f^*g)$ under 
pull-back by diffeomorphisms $f:C_0\to C_0$.

We recall from \cite{R-cyl} that the conformal invariance allows us to restrict the metric to the set of hyperbolic metrics with boundary curves of prescribed constant geodesic curvature. We also recall that if we were to use \textit{all} of the symmetries to simplify the evolution of the metric component (rather than use some of the symmetries for the map component instead), we could further restrict the metric to be an element of the one-parameter family $G_\ell$ of metrics which are obtained as follows:
for each $\ell\in(0,\infty)$, we consider the standard hyperbolic collar  
\beq
\label{def:cyl-ell}
(C_\ell, g_\ell)=\big([-Y_\ell,Y_\ell]\times S^1,\rho_\ell^2(s)(ds^2+d\th^2)\big),
\eeq
where
\beq\label{def:collar-metric}
\rho_\ell(s)=\tfrac{\ell}{2\pi}\cos(\tfrac{\ell}{2\pi}s)^{-1}, \text{ } Y_\ell=\tfrac{2\pi}{\ell}\big(\tfrac{\pi}{2}-\arctan(c\ell)\big),
\eeq
for a fixed $c>0$. The $G_\ell$ are now obtained as pull-back of $g_\ell$
by diffeomorphisms $f_\ell:C_0\to [-Y_\ell,Y_\ell]\times S^1$ 
which are chosen such that $(G_\ell)$ is horizontal, i.e.~moves $L^2$-orthogonal to the action of the diffeomorphisms, and whose precise form was given in \cite[Lemma 2.4]{R-cyl}. This condition on $G_\ell$ is equivalent to 
$\tfrac{d}{d\ell}G_\ell$ being an element of the horizontal space $H(G_\ell)$
which is made up of all 
 trace- and divergence-free tensors $k$ for which furthermore $k(\nu,t)\equiv 0$ on $\partial C_0$, if $\nu,t$ are  normal, respectively tangent, to $\partial C_0$, so 
\beqs 
H(G_\ell):=\big\{ f_\ell^* \big(c ( ds^2-d\th^2)\big), c\in \R\big\}.\eeqs
While for the flow of Topping and the first author \cite{RT} on closed domains, the metric component only moves in horizontal directions, for surfaces with boundary, we need to reserve some of the symmetries coming from the diffeomorphism invariance to impose a three-point condition on the map component, compare \eqref{def:three-standard} below. As a result, one must also allow the metric to move by the pull-back by select diffeomorphisms.
In the case of the cylinder, these are given by 
elements of the six-parameter family of diffeomorphisms 
$$(h_{b,\phi})_{(b,\phi)=((b^+,\phi^+),(b^-,\phi^-))\in \Om^2}, \text{ where }\Om=D\times \R\subset  \C\times \R, \quad D=D_1(0),$$ 
whose precise definition is given in equation (4.2) in \cite[Section 4.1.1]{R-cyl}. The key properties of these diffeomorphisms are that 
their restrictions onto the boundary curves 
$\{\pm1\}\times S^1$ are described by the corresponding M\"obius transforms, i.e.~their angular component is such that 
\beqs
e^{\i h^\th_{(b,\phi)}(\pm 1,\th)}= M_{b^\pm,\phi^\pm}(e^{\i \th}), \text{ where } M_{b^\pm,\phi^\pm}(z)=e^{\i \phi^\pm}\tfrac{z+b}{1+\overline{b} z},\eeqs
and that a change of the metric 
$$\Var^{+}(g):=\{\ddeps h_{(b^+,\phi^+)(\eps), (b^-,\phi^-)}^* G_{\ell} \text{ where } (b^+,\phi^+)\vert_{\eps=0}=(b^+,\phi^+) \text{ and } g=h_{(b,\phi)}^*G_\ell\}, $$
induced by a change of only $(b^+,\phi^+)$
is supported in a fixed compact subset of the corresponding half-cylinder $C^+:=(0,1]\times S^1$, 
with the analogous statement holding also for  variations $\Var^-(g)$ induced by changes of $(b^-,\phi^-)$.

The metric component of the flow hence evolves in
$\Mneu:=\{ h_{b,\phi}^*G_\ell:\, (b,\phi)\in\Om^2,\, \ell\in (0,\infty)\}$
whose tangent space splits $L^2$-orthogonally as 
\beq
\label{eq:splitting1}
T_g\Mneu=H(g)\oplus \Var^+(g)\oplus \Var^-(g).\eeq
 We can equip $T_g\Mneu$ with either the $L^2$-metric or, more generally, with a product metric  
$$\norm{h+L_{X_+}g+L_{X_-}g}^2:= \norm{h}_{L^2(C_0,g)}^2+\eta_+^{-2}\norm{L_{X_+}g}_{L^2(C_0,g)}^2+\eta_-^{-2}\norm{L_{X_-}g}_{L^2(C_0,g)}^2,
$$
for $ h\in H(g)$ and $L_{X_{\pm}}g\in\Var^\pm(g)$,
where we shall assume in the following that $\eta_{\pm}=\eta(\abs{b^\pm})$ for a smooth function $\eta:[0,1)\to \R^+$ which is bounded from above.

As the negative $L^2$-gradient of $E$ is described in terms of the Hopf-differential $\Phi(u,g)$ by
$$-\na_gE=\tfrac14 \Rea(\Phi(u,g)), \text{ where } \Phi(u,g)=\big[\abs{u_x}^2-\abs{u_y}^2-2\i \langle u_x,u_y\rangle \big] dz^2, \,z=x+\i y$$
for local isothermal coordinates $(x,y)$ on $(C_0,g)$,
the resulting gradient flow for the metric component is hence characterised by 
\beq \label{eq:fluss-metric}
\pt g=\tfrac14 \big[ P_g^{H}(\Rea(\Phi(u,g)))+\eta^2(\abs{b^+})\cdot  P_g^{\Var^+}(\Rea(\Phi(u,g))) +\eta^2(\abs{b^-})\cdot P_g^{\Var^-}(\Rea(\Phi(u,g))) \big],  \eeq
where $P_g^H$, $P_g^{\Var^\pm}$ refer to the $L^2$-orthogonal projections onto $H(g)$, respectively $\Var^\pm(g)$.

The map component of a solution of the flow is determined by evolving a map $u$ in $H^1_{\Gamma,*}(C_0,\R^n)$ by the negative $L^2$-gradient of the energy, and hence characterised by 
\beq
\label{eq:fluss-map}
\int_{[0,T]\times C_0}\lan du, dw\ran_{g}+ \pt u\cdot w\, dv_{g} \, dt \geq 0 
\text{ for all }w\in L^2([0,T],T^+_u H^1_{\Gamma,*}(C_0))
\eeq
where $H^1_{\Gamma,*}(C_0)$ is the set of all functions $u\in H_{\Gamma}^1(C_0)$, where
\beqs 
H^1_{\Gamma}(C_0):=\{u\in H^1(C_0,\mathbb{R}^n), \text{ with } u\vert_{\{\pm1\}\times S^1} \text{ a weakly monotone parametrisation of }\Gamma^\pm\}\eeqs
which furthermore satisfy the three-point condition 
\beq
\label{def:three-standard}
u(P_0^{j,\pm})=Q^{j,\pm} , \quad j=1,2,3, \text{ where } P_0^{j,\pm}=(\pm1, \tfrac{2j\pi}{3})\eeq
for some fixed distinct points $Q^{j,\pm}\in \Gamma^\pm$. Here $T_u^+H^1_{\Gamma,*}(C_0)$ is the corresponding tangent cone, compare \cite[Section 2.2]{R-cyl}.

\begin{rmk}
We note that the flow considered in \cite{R-cyl} corresponds to the case that the coupling functions $\eta_\pm\equiv 1$, but observe that all results of \cite{R-cyl} remain valid also for general smooth coupling functions $\eta_\pm=\eta(\abs{b^\pm})$ with $\eta>0$ bounded from above, as explained in  Appendix \ref{subsect:app-flow}.
We also note that 
we could furthermore introduce a coupling that relates the speeds of the horizontal part of the metric and the map component, and that questions related to the choice of the coupling constant in the closed case have been considered by Huxol in \cite{Huxol}.
\end{rmk}
 
The results of \cite{R-cyl} guarantee that for every $u_0\in H^1_{\Gamma,*}(C_0)$, there exists a global weak solution
$$(u,g)\in 
 \big(L^\infty([0,\infty),H^1_{\Gamma,*}(C_0,\R^n))\cap H^1([0,\infty)\times C_0)\big)\times C^{0,1}([0,\infty),\Mneu)$$ 
of \eqref{eq:fluss-metric} and \eqref{eq:fluss-map}
 which, furthermore, is stationary at the boundary of $C_0$ except at the points $P_0^{j,\pm}$ for almost every time $t$, compare \eqref{eq:stationar} below,
and which satisfies the energy inequality 
\beqa \label{eq:energy-cond} E(u,g)(t_1)-E(u,g)(t_2)\geq& \thalf \int_{t_1}^{t_2}\norm{\pt u}_{L^2(C_0,g)}^2\,  dt+\int_{t_1}^{t_2}\langle -\na_g E(u,g),\pt g\rangle_{L^2(C_0,g)} \,dt\eeqa
for   almost every  $t_1<t_2$, where by construction 
\beqa \label{eq:energy-term-g}
\langle -\na_g E,\pt g\rangle_{L^2}
=\tfrac14\big[\norm{P_g^{H}(\Rea(\Phi))}_{L^2}^2+\eta_+^2\norm{P_g^{\Var^+}(\Rea(\Phi))}_{L^2}^2 +\eta_-^2\norm{P_g^{\Var^-}(\Rea(\Phi))}_{L^2}^2\big].
 \eeqa
 Here and in the following, we say that a map $u:(\Omega,g)\to\mathbb{R}^n$ satisfies the stationarity condition (or say $u$ is stationary) at a boundary curve $\gamma$ of a domain $\Omega$ except at finitely many points $P^j$
 if there exists a neighbourhood $U$ of $\gamma$ such that
\beq\label{eq:stationar}
\tfrac14\int \Rea( \Phi(u,g))L_X g\, dv_g+\int du(X)\cdot \Delta_g u\, dv_g =0
\eeq
 for every $X\in \Gamma(T\Omega)$ such that $X$ is tangential to $\gamma$, $X(P^j)=0$ and $\supp(X)\subset U$.
 
In the case that the diffeomorphisms $(h_{b,\phi})_t$ by which we pull back the metrics $G_\ell$ to obtain $g=h_{b,\phi}^*G_\ell$ do not degenerate as $t\to \infty$, the results of \cite{R-cyl} ensure that the flow changes the initial surface into either 
 a (branched) minimal immersion from the cylinder spanning both $\Gamma^{\pm}$ (if $\ell(t_i)\nto 0$) or into two (branched) minimal immersions from discs, each spanning one of the boundary curves (if $\ell(t_i)\to 0$), as is described in detail in \cite[Theorem 2.7]{R-cyl}.
 
In the present paper, we address the remaining case of the  asymptotic behaviour, i.e.~the case 
that the diffeomorphisms $h_{b,\phi}$ degenerate as in \eqref{ass:b_to_1}, which will result in the formation of a \textit{harmonic boundary bubble}.
We shall furthermore see that for coupling functions $\eta_\pm=\eta(\abs{b^\pm})$, for $ \eta\in C^\infty([0,1),\R^+)$  so that 
 \beq
\label{ass:coupling} 
 \eta(\xi)\leq C(1-\xi)^\gamma \text{ for some } C\in\R \text{ and }\gamma> \frac{1}{1-\frac{1}{\sqrt{2}}},
 \eeq
 and for boundary curves $\Gamma^\pm$ that satisfy the separation condition 
 \beq
 \label{ass:no-linking}
 \Gamma^\pm\cap S_{\Gamma^\mp}=\emptyset \text{ for every  minimal disc } S_{\Gamma^\mp} \text{ that spans }\Gamma^\mp,
 \eeq
 these boundary bubbles must themselves be (branched) minimal immersions so that the flow will again decompose the initial map into two minimal discs spanning $\Gamma^\pm$. 
 
 We remark that our condition \eqref{ass:no-linking} is a weaker version of Douglas' separation condition that requires  
 \beq \label{eq:Douglas}
 \dist(S^+,S^-)>0 \text{ for all minimal discs } S^\pm \text{ spanning } \Gamma^\pm,
 \eeq
 and ensures the existence of minimal cylinders spanning $\Gamma^\pm$, compare \cite{Douglas}, and under which 
Struwe \cite{Struwe-cyl-paper} developed a Morse theory for 
  the energy of annulus type minimal surfaces, there with the energy viewed as a function of the parametrisations of the boundary curves via harmonic extension.

 To state our result in detail, we first note that 
using the identification $(u,g)\sim (u\circ f, f^*g)$, $f$ a diffeomorphism, we can view 
a solution of the flow at times $t$  
equivalently as either maps 
$u(t)$ that satisfy the standard three-point condition
\eqref{def:three-standard} but are defined on cylinders $(C_0, g(t)=h_{(b,\phi)(t)}^*f_{\ell(t)}^*g_{\ell(t)})$ whose metrics become singular as $\abs{b^\pm}\to 1$, or 
 as maps 
\beq \label{def:utilde}
\tilde u(t):= u(t)\circ (f_{\ell(t)} \circ h_{(b,\phi)(t)})^{-1}
:(C_{\ell(t)}, g_{\ell(t)} )\to \R^n, 
\eeq
on the well controlled cylinders $(C_{\ell(t)}, g_{\ell(t)} )$ 
that are described by \eqref{def:cyl-ell}, 
 at the cost of the new maps now satisfying a three-point condition that degenerates as $\abs{b^\pm}\to 1$: namely for every $t\in [0,\infty)$ the map $\tilde u(t)$ satisfies 
$
\tilde u(t,P^{j,\pm})= Q^{j,\pm}
$
for the same fixed points $Q^{j,\pm}\in \Gamma^\pm$  in the target, but now for points $P^{j,\pm}(t)=(\pm Y_{\ell(t)},\th^{j,\pm}(t)) $ which are characterised by 
\beq\label{def:P^j,pm}
e^{\i \th^{j,\pm}(t)}=M_{b^\pm(t),\phi^\pm(t)}(e^{\i \frac{2\pi}{3} j}).
\eeq
If $\abs{b^\pm(t)}\to 1$, 
then $\eps^\pm:=\min_{j\neq k}(\dist_{g_{\ell(t)}}(P^{k,\pm}(t),P^{j,\pm}(t)))\to 0$ so, after passing  to a subsequence $t_i\to \infty$, 
at least two of the points $P^{j,\pm}(t_i)$ will converge to the same accumulation point which we denote by 
$P_*^\pm$. As we shall see below, this not only forces a boundary bubble to form at $P_*^\pm$, but furthermore forces the cylinder to stretch out and become infinitely long and the map to converge to a constant away from the concentration point(s) $P_*^\pm$.

To be more precise, we shall prove the following result:

\begin{thm}\label{thm:1}
Let $u_0\in H_{\Gamma,*}^1(C_0)$ be any initial map, $g_0\in\Mneu$ any initial metric, and let $(u,g)$ be the corresponding global solution of Teichm\"uller harmonic map flow \eqref{eq:fluss-metric}, \eqref{eq:fluss-map} on the cylinder 
 which is stationary at the boundary except at the three points $P^{j,\pm}_0$ and satisfies the energy inequality \eqref{eq:energy-cond}. Here $\eta_\pm=\eta(\abs{b^\pm})$ for an arbitrary function $\eta\in C^\infty([0,1),\R^+)$ that is bounded from above.
\\
Suppose that the three-point condition degenerates as $t\to \infty$ in the sense that 
\beq
\label{ass:b_to_1}
\liminf_{t\to \infty} \max(\abs{b^+(t)}, \abs{b^-(t)})= 1.
\eeq
Then 
$$\ell(t)\to 0 \text{ as } t\to \infty$$
and there exist $t_i\to \infty$, chosen in particular so that $(b,\phi)(t_i)\to(b_\infty,\phi_\infty)$,
so that the following holds true for the corresponding maps $\tilde u(t_i):(C_{\ell(t_i)},g_{\ell(t_i)})\to \R^n$ that are defined by \eqref{def:utilde}:
\begin{enumerate}
 \item[(i)] 
The maps $u^\pm_i(s,\theta):=\tilde u(t_i)(\pm Y_{\ell(t_i)}\mp s,\theta )$
converge on the half-cylinder 
 $[0,\infty)\times S^1$
to  limit maps $u^\pm_\infty$ 
which are such that
 if $|b^\pm_\infty|=1$, then $u^\pm_\infty$ is constant;
while if $|b^\pm_\infty|<1$, then $u^\pm_\infty$ can be extended across the puncture to a (branched) minimal immersion on the unit disc.
In both cases the convergence is weak $H^2$-convergence away from at most three points and strong $H^1$-convergence away from at most one point, see Theorem \ref{thm:A} for details.
 \item[(ii)] Harmonic bubbles will form at $P^\pm_*$ in the sense that if $|b^\pm_\infty|=1$, then the maps $\tilde{u}(t_i)$ can be pulled back to a sequence of maps $\hat{u}^\pm_i:\Omega_i^\pm\to\mathbb{R}^n$
that converge weakly in $H^2$ away from at most four points to a harmonic map $\hat{u}^\pm_\infty:D\to \R^n$ that spans $\Gamma^\pm$. Here the domains $\Omega_i^\pm$ are such that they exhaust the unit disc minus a point $\widehat P_*^\pm\in \partial D$. 
 \item[(iii)] Moreover, if the coupling function satisfies \eqref{ass:coupling} and the curves $\Gamma^\pm$ satisfy the separation condition \eqref{ass:no-linking}, then the bubbles $\hat{u}^\pm_\infty:D\to\mathbb{R}^n$ are themselves (branched) minimal immersions and indeed no Hopf-differential is lost near the boundary in the sense that 
 $$\norm{\Phi(u_i^\pm)}_{L^1([0,\La]\times S^1)}\to 0 \text{ for every } \La>0.$$
\end{enumerate}
 \end{thm}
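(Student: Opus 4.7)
The plan is to extract a good subsequence $t_i \to \infty$ using the energy inequality and then to analyse the three claims separately. The inequalities \eqref{eq:energy-cond} and \eqref{eq:energy-term-g} ensure that along a suitably chosen sequence $t_i \to \infty$, one has $\norm{\pt u(t_i)}_{L^2} \to 0$, so the maps are almost harmonic, and all three projections of $\Rea(\Phi)$ are small in $L^2$; in particular $\norm{P_g^{H}(\Rea(\Phi))}_{L^2}\to 0$ and $\eta_\pm\norm{P_g^{\Var^\pm}(\Rea(\Phi))}_{L^2}\to 0$. To show that $\ell(t) \to 0$ I argue by contradiction: if $\ell(t_i)$ stays bounded away from zero, the argument from \cite{R-cyl} in the non-degenerate case produces a (branched) minimal cylinder spanning $\Gamma^\pm$ as the limit, and such a configuration is incompatible with the degeneration \eqref{ass:b_to_1} of the three-point condition.

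For (i), once $\ell(t_i)\to 0$, the half-cylinders $(C_{\ell(t_i)}^{\pm}, g_{\ell(t_i)})$ converge smoothly on compact sets to the flat infinite half-cylinder $[0,\infty)\times S^1$. The uniform energy bound combined with $\eps$-regularity for almost harmonic maps on surfaces yields weak $H^2$-convergence of $u_i^\pm$ away from finitely many energy-concentration points. The dichotomy in (i) then comes from the behaviour of the prescribed boundary points $P^{j,\pm}(t_i)$ under \eqref{def:P^j,pm}: if $\abs{b^\pm_\infty}<1$, the three points remain uniformly separated in the limit, the boundary parametrisation is non-degenerate; combined with conformality of the limit (forced by vanishing of $\Rea(\Phi)$ in $H$ and $\Var^\pm$ and by the boundary stationarity \eqref{eq:stationar}) and removability of an isolated puncture at the end of the half-cylinder, $u^\pm_\infty$ becomes a branched minimal immersion on $D$. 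If $\abs{b^\pm_\infty}=1$, the three points collapse to one and the resulting loss of boundary information, together with the vanishing of the Hopf-differential projections, forces $u^\pm_\infty$ to be constant.

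Claim (ii) requires undoing the collapse of the three points at $P^\pm_*$. I pull $\tilde u(t_i)$ back by the M\"obius pieces of $h_{(b^\pm(t_i),\phi^\pm(t_i))}$, so that after pull-back the three prescribed points are again uniformly separated on $\partial D$, at the cost of the domain becoming an exhaustion $\Omega_i^\pm$ of $D\setminus\{\widehat P^\pm_*\}$. Conformal invariance of the Dirichlet integral transfers the energy bounds to $\hat u^\pm_i$, while the near-harmonicity of $\tilde u(t_i)$ is preserved under this conformal pull-back. Standard bubbling analysis for almost harmonic maps, now with a non-degenerate three-point condition, yields a weak $H^2$-limit $\hat u^\pm_\infty$ away from at most four points (three where interior energy concentration may occur, plus the boundary puncture). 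Removability of isolated boundary singularities in the presence of controlled boundary data and finite energy then extends $\hat u^\pm_\infty$ to a harmonic map on $D$ that spans $\Gamma^\pm$.

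The main obstacle is (iii), since a priori the Hopf differential of the limiting bubble $\hat u^\pm_\infty$ need not vanish: it is holomorphic on $D$ and real on $\partial D$, but the pull-back via the M\"obius transform can concentrate mass into a pole of order up to four at $\widehat P^\pm_*$. Two ingredients combine to rule this out. The coupling assumption \eqref{ass:coupling}, with its explicit exponent $\gamma > 1/(1-1/\sqrt{2})$, is tuned precisely so that the control $\eta_\pm\norm{P_g^{\Var^\pm}(\Rea(\Phi))}_{L^2}\to 0$ survives the M\"obius pull-back and forces the real boundary trace of $\Phi(\hat u^\pm_\infty)$ to vanish, excluding three of the four possible poles by direct residue computations for admissible holomorphic quadratic differentials on $D$. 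The remaining, highest-order pole is excluded via the separation condition \eqref{ass:no-linking}: were $\Phi(\hat u^\pm_\infty)$ still non-trivial, one could deform the bubble towards the minimal disc spanning its own boundary, strictly decreasing energy while keeping the image disjoint from all minimal discs spanning $\Gamma^\mp$ by \eqref{ass:no-linking}; inserting this competitor into $(u(t_i),g(t_i))$ contradicts the asymptotic minimality encoded in \eqref{eq:energy-cond}. Once $\Phi(\hat u^\pm_\infty)\equiv 0$, a no-neck type estimate near the boundary yields the $L^1$-convergence of $\Phi(u^\pm_i)$ stated in (iii).
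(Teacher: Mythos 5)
Your overall outline for $\ell\to 0$ and parts (i)--(ii) is broadly in line with the paper's strategy: extract a good subsequence from the energy inequality so that the tension, the horizontal projection, and (when $|b^\pm_\infty|<1$) the $\Var^\pm$-projections all tend to zero, and then invoke the almost-minimal compactness result (Theorem \ref{thm:A}). One caveat: the one-line ``contradiction'' you give for $\ell\to 0$ --- ``the non-degenerate argument from \cite{R-cyl} produces a minimal cylinder'' --- does not directly apply, since the three-point condition is by hypothesis degenerating and the R-cyl non-degenerate argument presupposes uniform control of the boundary trace. The paper's actual mechanism (inside the proof of Theorem \ref{thm:A}) is an analysis of the Hopf-differential, which shows that if $\ell$ stays bounded away from zero then the horizontal projection of $\Rea(\Phi)$ cannot go to zero, contradicting the choice of subsequence.

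The argument you give for part (iii), however, contains a genuine gap. The claim that ``inserting this competitor into $(u(t_i),g(t_i))$ contradicts the asymptotic minimality encoded in \eqref{eq:energy-cond}'' does not hold: the energy inequality only encodes monotone decrease of $E$ along the flow, not any approximate-minimizing property of the solution at late times. A gradient flow is perfectly free to converge to a saddle or a strict local maximum of the restricted energy, so the mere existence of a lower-energy competitor obtained by replacing the bubble with a minimal disc yields no contradiction. Moreover, the separation condition \eqref{ass:no-linking} does not control images of competitors built from $\hat u^\pm_\infty$ --- it concerns the distance between $\Gamma^\mp$ and minimal discs spanning $\Gamma^\pm$, and the paper uses it for a completely different purpose. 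Likewise, the statement that the coupling condition ``forces the real boundary trace of $\Phi(\hat u^\pm_\infty)$ to vanish'' is not correct: the boundary trace is real by the stationarity condition independent of the coupling, and a real boundary trace does not exclude simple poles at boundary points.

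What actually drives part (iii) in the paper is quantitative residue control, not an energy-comparison argument. The coupling condition \eqref{ass:coupling} is used to derive the a priori lower bound $1-|b^\pm(t)|\geq c\,t^{-1/(2\gamma)}$ on the rate of degeneration of the three-point condition (via the explicit bound $\norm{L_{Y_{|b^+|}}g}_{L^2}\geq C(1-|b^+|)^{-1}$). Combined with the additional freedom to choose $t_i$ so that $\delta_i=\norm{\De_g u(t_i)}_{L^2}+\norm{P_g^H(\Rea\Phi)(t_i)}_{L^2}^4\leq 1/\sqrt{t_i}$ (which is available precisely because of the energy inequality), this verifies the rate relation \eqref{ass:new} of Theorem \ref{thm:B}. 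The separation condition \eqref{ass:no-linking} is used purely to verify the other hypothesis \eqref{ass:mv-different} of Theorem \ref{thm:B}: it gives $\dist(\Gamma^-,S^+)>0$ for the limiting minimal disc $S^+$ at the non-degenerate end, and hence a lower bound on $|M_{u_i}(-Y_i^-)-M_{u_i}(Y_i^+)|$. Theorem \ref{thm:B} then applies, and the mechanism by which it forces the bubble to be conformal is Lemma \ref{lemma:key-res}, which bounds the residues of the Hopf-differential at the four potential poles in terms of $\delta_i$ and the spacings $\eps_{i,1},\eps_{i,2}$. This residue control, not an energy comparison, is what your argument is missing.
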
 

The above result will be a consequence of more general results, stated in Theorems \ref{thm:A} and \ref{thm:B} below, about the formation of boundary bubbles for sequences of \textit{almost minimal maps}
from hyperbolic cylinders 
\beq \label{def:cyl}
(C_i,g_i)=\big([-Y_i^-,Y_i^+]\times S^1, \rho_{\ell_i}^2(ds^2+d\th^2)\big)\text{ for some } \ell_i\in (0,\ell_0), \eeq
where $\rho_{\ell_i}$ are as defined in \eqref{def:collar-metric}, and $Y_i^\pm\geq 1$ are so that
\beq \label{ass:cyl} \tfrac{2\pi}{\ell_i}(\tfrac{\pi}2-c_1\ell_i)\leq Y_i^\pm \leq \tfrac{2\pi}{\ell_i}(\tfrac{\pi}2-c_2\ell_i),\eeq
 for some fixed numbers $\ell_0, c_{1,2}>0$. 
  We note that this
  ensures that the metrics $g_i$ are uniformly equivalent to the standard metric $ds^2+d\th^2$ on `chunks' 
 $K_i^{\pm,\Lambda}:=\{(s,\th): Y^\pm-\La\leq \pm s\leq Y^{\pm}\}$ of fixed length $\La>0$ at the ends of the cylinder and 
remark that \eqref{ass:cyl} is satisfied, in particular, for the cylinders \eqref{def:cyl-ell} arising from the flow considered above. For related results on almost minimal maps from closed (degenerating) surfaces we refer to \cite{HRT,bubbles-Miaomiao}.

\begin{thm}
\label{thm:A}
Let $\Gamma^\pm\subset \R^n$ be two disjoint $C^3$ Jordan curves, 
let $(C_i,g_i)$ be hyperbolic cylinders as in \eqref{def:cyl} and \eqref{ass:cyl} and let $Q^{1,2,3,\pm}\in \Gamma^\pm$ be distinct points.
Let 
$u_i\in H^1_\Gamma(C_i,g_i)$ be a sequence of maps  with uniformly bounded energy
that satisfy the stationarity condition \eqref{eq:stationar} at the boundary curves of $C_i$ except at six points $P_i^{j,\pm}$ for which furthermore 
$u_i(P_{i}^{j,\pm})=Q^{j,\pm}, j=1,2,3. $
Suppose that at least one of these three-point conditions degenerates in the sense that 
 at least one of 
\beq \label{ass:deg-3-point}
\eps_i^\pm:=\min_{j\neq k}(\dist_{g_i}(P_{i}^{j,\pm}, P_{i}^{k,\pm}))\to 0,\eeq
and that the maps $u_i$ are almost 
minimal in the sense that  
\beq \label{ass:almost-min} 
\norm{\tens_{g_i} u_i}_{L^2(C_i,g_i)}+\norm{P^H_{g_i}(\Rea(\Phi(u_i,g_i)))}_{L^2(C_i,g_i)}\to 0,
\eeq
and, in the case that $\eps_i^\pm\not\to 0$, also
\beq
\norm{P_g^{\Var^\pm}(\Rea(\Phi(u_i,g_i)))}_{L^2(C_i,g_i)}\to 0.
\eeq
Then 
$$\ell_i\to 0$$ and, after passing to a subsequence, the maps $u_i$
converge in the following sense:

If one of the three-point conditions does not degenerate, say $\eps_i^-\to 0$ but $\eps_i^+\to\eps_\infty^+>0$,
then the maps $u_i^+:(s,\th)\mapsto u_i(Y^+_i- s,\th)$ converge to a limit $u_\infty^+$
which may be extended across the puncture to a (branched) minimal disc spanning $\Gamma^+$, compare \cite[Theorem 2.7]{R-cyl}.

On the half-cylinders $C_i^\pm=C_i\cap \{ \pm s>0\} $ on which the three-point condition degenerates (which may be one or both of them), i.e.~for which $\eps_i^\pm\to 0$, we have
\begin{enumerate}[label=\textnormal{(\arabic*)}]
 \item[(i)] The maps $u_i^\pm:(s,\th)\mapsto u_i(\pm Y^\pm_i\mp s,\th)$ converge weakly in $H^2_{loc}([0,\infty)\times S^1\setminus \{P_*^\pm\})$ to a constant limiting map $u^\pm_\infty\equiv q^\pm\in\Gamma^\pm$, where
 $P_*^\pm=(0,\th_*^\pm)$ for the mutual limit $\th_*^\pm$ of the angular components of at least two of the three points $P_i^{j,\pm}$.
 \item[(ii)]
 Harmonic bubbles form at $P^\pm_*$ in the sense that the maps
  converge to a harmonic map $\hat{u}_\infty^\pm:D\to\mathbb{R}^n$ spanning $\Gamma^\pm$ after being pulled-back by  suitable conformal diffeomorphisms  $\hat f_i^\pm:\Omega_i^\pm\to [0,Y_i^\pm]\times S^1$, namely
  $$\hat{u}_i^\pm:=u_i^\pm\circ \hat f_i^\pm\to\hat{u}_\infty^\pm \text{ in $H^1_{loc}(D\setminus \{ \widehat{P}_*^\pm\})$ for some }\widehat P_*^\pm \in \partial D $$ 
where the $\hat f_i^\pm$ are defined on a sequence of domains $\Omega_i^\pm$ exhausting $D\setminus\{\widehat{P}_*^\pm\}$ and furthermore converge to the constant map $P_*^\pm$ uniformly away from $\widehat{P}_*^\pm$.
 
 Moreover, the limit maps satisfy the stationarity condition \eqref{eq:stationar} for all vector fields $X \in\Gamma(TD)$ with support $\supp(X)\cap\{\widehat{P}_*^\pm\}=\emptyset$ and with $X(\widehat{P}^{j,\pm})=0$, $j=1,2,3$, where $\widehat{P}^{j,\pm}$ are the limits of the points $(\hat f_i^{\pm})^{-1}(P_i^{j,\pm})$, and the convergence is also weakly in $H^2_{loc}$ away from these four points $\{\widehat{P}^{j,\pm}, \widehat{P}_*^\pm\}$. 
\end{enumerate}
\end{thm}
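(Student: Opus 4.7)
The plan is to prove the theorem in three main steps: first establish $\ell_i \to 0$, then analyse convergence on the stretched half-cylinders where the three-point condition degenerates, and finally construct the harmonic bubble via a conformal rescaling. For the first step I would argue by contradiction: if $\ell_i$ stays bounded away from zero along a subsequence, the cylinders $(C_i,g_i)$ are uniformly bi-Lipschitz equivalent to a fixed hyperbolic cylinder. The almost-minimality assumption $\norm{\tens_{g_i} u_i}_{L^2}\to 0$ together with the uniform energy bound and the boundary stationarity \eqref{eq:stationar} then yields, via standard $\eps$-regularity for approximately harmonic maps and elliptic regularity up to the smooth part of the boundary, $H^1\loc$-convergence of a subsequence $u_i$ away from finitely many interior concentration points, with strong $L^2$-convergence of the boundary traces to weakly monotone parametrisations of $\Gamma^\pm$. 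The assumed degeneracy $\eps_i^\pm\to 0$ would force two distinct prescribed points $Q^{j,\pm}\neq Q^{k,\pm}$ to be attained at a common limit position on $\{\pm Y^\pm\}\times S^1$, contradicting weak monotonicity.

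For the second step, after passing to a subsequence I would set $P_*^\pm=(0,\theta_*^\pm)$ as the common angular limit of (at least) two colliding points, and work with the reflected maps $u_i^\pm(s,\theta)=u_i(\pm Y_i^\pm\mp s,\theta)$ on the increasingly long half-cylinders $[0,Y_i^\pm]\times S^1$, whose metrics $g_i$ converge uniformly on compact subsets of $[0,\infty)\times S^1$ to the flat metric by the explicit form of $\rho_{\ell_i}$ in \eqref{def:collar-metric}. A further application of $\eps$-regularity plus interior and boundary stationarity away from $P_*^\pm$ gives weak $H^2\loc$-convergence to a harmonic limit $u_\infty^\pm$ away from finitely many bubbling points. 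To obtain $u_\infty^\pm\equiv q^\pm$, I would couple this with the rescaling in step~3: the bubble there captures all three prescribed points $Q^{j,\pm}$, so that the residual trace of $u_\infty^\pm$ at $\{s=0\}$ is a weakly monotone parametrisation attaining no prescribed value on a boundary arc and is therefore constant; finite-energy removability on the conformal identification $[0,\infty)\times S^1\simeq D\setminus\{0\}$ then forces $u_\infty^\pm$ to be globally constant.

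For the third step I would construct $\hat f_i^\pm:\Omega_i^\pm\to[0,Y_i^\pm]\times S^1$ as restrictions of M\"obius transforms of $D$ centred at a chosen boundary point $\widehat P_*^\pm\in\partial D$, with scale calibrated so that the preimages $(\hat f_i^\pm)^{-1}(P_i^{j,\pm})$ stay pairwise separated in $\overline{D}$ and converge to distinct limits $\widehat P^{j,\pm}$. By conformal invariance of the Dirichlet energy and the tension field, $\hat u_i^\pm:=u_i^\pm\circ\hat f_i^\pm$ have bounded energy with $\norm{\tens \hat u_i^\pm}_{L^2}\to 0$ and inherit weakly monotone boundary parametrisations of $\Gamma^\pm$. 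Bubble-compactness as in step~2 yields weak $H^2\loc$-convergence to a harmonic map $\hat u_\infty^\pm$ on $D\setminus\{\widehat P_*^\pm,\widehat P^{1,\pm},\widehat P^{2,\pm},\widehat P^{3,\pm}\}$, and the finite-energy removable singularity theorem extends it smoothly across $\widehat P_*^\pm$; the spanning property passes to the limit from the weakly monotone boundary data. The non-degenerating side (e.g.\ when $\eps_i^+\not\to 0$) is handled directly by the analysis of \cite[Theorem 2.7]{R-cyl} applied to $u_i^+$, using the additional $\Var^+$-projection hypothesis in the theorem.

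The principal obstacle I expect lies in the third step: the calibration of $\hat f_i^\pm$ must resolve the boundary concentration finely enough that all three (or the relevant subset of) $P_i^{j,\pm}$ remain well separated in the new coordinates, yet coarsely enough that no energy is lost in the neck between the base map $u_\infty^\pm$ and the bubble $\hat u_\infty^\pm$, so that the full arc of $\Gamma^\pm$ missing from the trace of $u_\infty^\pm$ is indeed captured by $\hat u_\infty^\pm$. A related delicate point is tracking the boundary stationarity condition through the rescaling, which is needed to guarantee that $\hat u_\infty^\pm$ is harmonic on all of $D$ rather than only on $D\setminus\{\widehat P_*^\pm\}$.
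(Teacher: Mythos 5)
Your proof of the first claim $\ell_i\to 0$ contains a fundamental gap. You propose to derive a contradiction from weak monotonicity: that $\eps_i^\pm\to 0$ would force two distinct targets $Q^{j,\pm}\neq Q^{k,\pm}$ to be attained at a common limit position on the boundary. But this is not a contradiction, because convergence of the traces fails precisely at the concentration point $P_*^\pm$ where the $P_i^{j,\pm}$ collide; by the Courant--Lebesgue lemma (Remark \ref{rmk:Courant-Leb}) the traces converge to a constant locally uniformly on the complement of $P_*^\pm$, so the limit trace is simply constant there and no monotonicity constraint survives at the singular point. More seriously, your argument for $\ell_i\to 0$ uses only $\norm{\tens_{g_i} u_i}_{L^2}\to 0$ and never invokes the other almost-minimality hypotheses, namely that $\norm{P^H_{g_i}(\Rea(\Phi(u_i,g_i)))}_{L^2}\to 0$ and, when $\eps_i^\pm\nto 0$, that $\norm{P_{g_i}^{\Var^\pm}(\Rea(\Phi(u_i,g_i)))}_{L^2}\to 0$. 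Without these the conclusion is false: on a fixed cylinder one can produce finite-energy almost-harmonic maps satisfying the boundary stationarity and a degenerating three-point condition for which $\ell_i$ stays bounded away from zero; the degeneration forces a bubble but not a degeneration of the conformal structure.

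The paper establishes $\ell_i\to 0$ by a genuinely different argument, which is the delicate part of the proof. Assuming $\ell_\infty>0$, one analyses the Hopf differential of the limit $u_\infty$. If both three-point conditions degenerate, $u_\infty$ is harmonic and constant on both boundary curves, hence parametrises a straight segment, so $\Phi(u_\infty)=c\,dz^2$ with $c>0$, contradicting $\norm{P^H(\Rea\Phi_i)}_{L^2}\to 0$. If only one degenerates, the $\Var^\pm$-condition excludes boundary poles of $\Phi(u_\infty)$ on the non-degenerate side, and reflection together with Gulliver--Osserman--Royden forces $u_\infty$ to be constant, again a contradiction. The technical tool making this work, entirely absent from your outline, is the almost-meromorphic compactness result (Corollary \ref{cor:key-lemma}), which is needed to pass $\Phi(u_i,g_i)$ to the limit in $L^1$ near the colliding points $P_i^{j,\pm}$ where strong $H^2$-convergence of the maps is unavailable, combined with the observation (Remark \ref{rmk:mod-no-horiz}) that the pole-correction terms have no horizontal part. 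Your treatment of parts (i) and (ii) is closer in spirit to the paper's, given $\ell_i\to 0$; the paper makes the M\"obius calibration explicit (the transform sending $e^{\i 2\pi j/3}$ to $e^{\i\th_i^{j,\pm}}$, automatically separating the three points) and appeals to \cite[Lemma 3.11]{R-cyl} to rule out energy concentration at them, which addresses the calibration worry you raise.
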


Our main result, Theorem \ref{thm:B}, then shows that a suitable relation between the decay \eqref{ass:almost-min} of the tension and the horizontal part of the Hopf-differential on the one hand and the rate of the degeneration \eqref{ass:deg-3-point} of the three-point condition on the other hand ensures that the bubbles we obtain are not only harmonic but indeed describe minimal discs.

\begin{thm}
\label{thm:B}
Let $\al, R, E_0>0$, let $\Gamma\subset \R^n$ be a $C^3$ Jordan curve and let $Q^{1,2,3} \in \Gamma$ be distinct points.
Let $u_i\in H^1(C_i,g_i)$ be a sequence of maps with $E(u_i,g_i)\leq E_0$
from  
cylinders $C_i$ which are as in \eqref{def:cyl} and \eqref{ass:cyl} and for which $\ell_i\leq \bar \ell$ for some number $\bar \ell(E_0,\al, c_{1,2})>0$ that is determined by Lemma \ref{lemma:ell-upper}.

Suppose that $u_i|_{\{-Y_i^-\}\times S^1}$ are weakly monotone parametrisations of $\Gamma$ which satisfy 
the stationarity condition \eqref{eq:stationar} at $\{-Y_i^-\}\times S^1$ except at three points $P_i^j$ for which furthermore $u_i(P_i^j)=Q^j$ and denote by $\eps_{i,1}\leq \eps_{i,2}\leq \eps_{i,3}$ the distances between the points $P_{i}^j$. 

If the three-point condition degenerates in the sense that 
$$\eps_{i,1}=\min_{j\neq k} (\dist_{g_i}(P_{i}^j, P_{i}^k))\to 0,$$
and the maps are almost minimal in the sense that
\beq
\label{ass:tension}
\de_i:=\norm{\tens_{g_i}u_i}_{L^2(C_i,g_i)}+\norm{P_{g_i}^H(\Rea(\Phi(u_i,g_i)))}_{L^2(C_i,g_i)}^4\to 0\eeq
and if furthermore the mean values $M_{u_i}(\pm Y_i^{\pm}):=\fint_{\{\pm Y_i^{\pm}\}\times S^1} u_i \, d\th$ of $u_i$ at the ends of the cylinder are so that  
\beq
\label{ass:mv-different}
0<\alpha\leq \abs{M_{u_i}(-Y_i^{-})-M_{u_i}(Y_i^{+})}\leq R,
\eeq
then 
the following holds true:
\begin{itemize}
\item[(i)] The cylinders $C_i$ degenerate at a rate of 
\beq\label{est:upper-ell-thm}
\ell_i\leq C\big(\norm{P_{g_i}^H(\Rea(\Phi(u_i,g_i)))}^2_{L^2(C_i,g_i)}+\norm{\tens_{g_i} u_i}_{L^2(C_i,g_i)}^2\big)\to 0, 
\eeq
$C=C(E_0, \al, c_{1,2})$. In particular, after passing to a subsequence 
 a harmonic bubble forms as described in part (ii) of Theorem \ref{thm:A}. 
 \item[(ii)] 
  If furthermore $\de_i\to 0$ fast enough so that 
 for some $q<\sqrt{2}$,
\beq 
\eps_{i,2}^{-1} \de^{1-\frac{1}{q}}\to 0,
\label{ass:relation}
\eeq
then the harmonic limit map $\hat u_\infty$  obtained by rescaling $u_i$ around the concentration point $\widehat{P}_*$ as in part (ii) of Theorem \ref{thm:A} 
is 
weakly conformal and hence a (possibly branched) minimal immersion spanning $\Gamma$.
Furthermore, if 
\beq 
\label{ass:new} 
(1+\abs{\log(\eps_{i,1}/\eps_{i,2})}) \cdot \eps_{i,2}^{-1} \de^{1-\frac{1}{q}}\to 0,
\eeq
then no Hopf-differential is lost near the boundary curve, namely 
\beq 
\label{claim:no-hopf-lost} 
\norm{\Phi(u_i,g_i)}_{L^1([-Y_i^-,-Y_i^-+\La]\times S^1,g_i)}\to 0\eeq for every $\La>0$.
\end{itemize}
\end{thm}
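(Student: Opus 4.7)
The proof divides naturally into the three assertions: the degeneration rate (i), the weak conformality of the bubble in (ii), and the no-loss of Hopf-differential statement \eqref{claim:no-hopf-lost}. For (i), I will exploit that on the hyperbolic collar the horizontal space $H(g_i)$ is one-dimensional, spanned by $f_{\ell_i}^*(ds^2-d\theta^2)$, so the projection is $P_{g_i}^H(\Rea\Phi)=c_i\cdot f_{\ell_i}^*(ds^2-d\theta^2)$ with $c_i$ a weighted average of $\Rea\phi_i=|(u_i)_s|^2-|(u_i)_\theta|^2$ against $\rho_{\ell_i}^{-2}$. Splitting $u_i=F_i(s)+v_i(s,\theta)$ into mean value and $\theta$-dependent part, and expanding $v_i$ in Fourier modes, one finds that $\int_{S^1}\Rea\phi_i\,d\theta=2\pi|F_i'|^2$ up to a remainder in which the higher-mode contributions $|a_k'|^2-k^2|a_k|^2$ can be integrated by parts and controlled in $L^1$ by $\|\tens_{g_i}u_i\|_{L^2}$ times the uniform energy bound. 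The hypothesis \eqref{ass:mv-different} gives $|F_i(Y_i^+)-F_i(-Y_i^-)|\geq\alpha$, and the Euler--Lagrange minimization $\int|F_i'|^2\rho_{\ell_i}^{-2}\,ds\geq\alpha^2/\int\rho_{\ell_i}^2\,ds$ combined with the explicit integrals $\int\rho_{\ell_i}^2\,ds=O(1)$ and $\int\rho_{\ell_i}^{-2}\,dsd\theta\sim\ell_i^{-3}$ yields a lower bound $c_i\gtrsim \alpha^2\ell_i^3-\|\tens u_i\|$, hence $\|P^H(\Rea\Phi)\|_{L^2}^2\gtrsim\alpha^4\ell_i^3-\|\tens u_i\|^2$, which rearranges to \eqref{est:upper-ell-thm}.

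For (ii), the conformality of $\hat u_\infty$, the plan is to analyse the Hopf differential on the bubble scale. Since $\hat u_\infty$ is harmonic on $D$, $\Phi(\hat u_\infty)$ is holomorphic on $D\setminus\{\widehat P_*,\widehat P^j\}$. By Theorem~\ref{thm:A} the weak $H^2_{loc}$ convergence away from the four exceptional points transports the small-tension and small-horizontal-Hopf information to the limit, so $\Phi(\hat u_\infty)$ is a genuinely holomorphic quadratic differential on $D$ with at worst isolated poles at these four boundary points. The heart of the matter is to use the scaling relation \eqref{ass:relation}: the conformal dilations $\hat f_i$ act on the Hopf-differential as a factor of $(\hat f_i')^2$, and along the sequence the bubble rate is $\epsilon_{i,2}$ while the tension is $\delta_i$. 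Standard small-energy $\epsilon$-regularity for almost-harmonic maps on discs gives pointwise $L^\infty$ control of $\Phi(u_i)$ at scale $r$ by $r^{-2}(E(u_i;B_{2r})+\delta_i)$, and translated to the bubble via the pull-back this produces a bound on $\Phi(\hat u_i)$ in a shrinking neighbourhood of $\widehat P_*$ of the form $\epsilon_{i,2}^{-1}\delta_i^{1-1/q}$ (the exponent $q$ reflecting the reverse-H\"older integrability exponent of $|d\hat u_i|^2$, which by Gehring is an open interval around $2$, and thus some $q<\sqrt2$ is available precisely because the Hopf differential is quadratic). Condition \eqref{ass:relation} then forces the pole at $\widehat P_*$ to disappear in the limit. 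Combining this with the reflection principle on $\partial D$ (where $\Phi$ is real along the boundary by the boundary stationarity inherited from \eqref{eq:stationar}) and the fact that at the three remaining points $\widehat P^j$ the test vector fields in the stationarity formulation exhaust any boundary vector field that vanishes only at those points, the classical argument for Plateau minimisers gives $\Phi(\hat u_\infty)\equiv 0$, whence $\hat u_\infty$ is weakly conformal, and hence a (branched) minimal immersion by the separation condition \eqref{ass:no-linking} (which excludes collapse to a constant).

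For the no-loss statement under \eqref{ass:new}, I would split the integration region $[-Y_i^-,-Y_i^-+\Lambda]\times S^1$, pulled back to the disc via $\hat f_i$, into three annular pieces relative to $\widehat P_*$: the outer region $\{|z-\widehat P_*|\geq\epsilon_{i,2}\}$, the intermediate annulus $\{\epsilon_{i,1}\leq|z-\widehat P_*|\leq\epsilon_{i,2}\}$, and the inner piece $\{|z-\widehat P_*|\leq\epsilon_{i,1}\}$. Conformality of the bubble just established in (ii) takes care of the inner piece in the limit; the outer piece is controlled by the $H^2_{loc}$-convergence away from $\widehat P_*$ together with the smallness of $\Phi$ on the target limit (constant on the half-cylinder); and the intermediate annulus carries at most a logarithmic weight in $\epsilon_{i,1}/\epsilon_{i,2}$ because holomorphic quadratic differentials on annuli satisfy $\|\Phi\|_{L^1(A)}\lesssim (1+|\log(r_1/r_2)|)\|\Phi\|_{L^\infty(\partial A)}$-type estimates. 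The additional log-factor in \eqref{ass:new} is exactly what is needed to make this intermediate contribution vanish.

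The main obstacle I anticipate is the precise quantification of the \emph{pole-order bound} for $\Phi(\hat u_\infty)$ at $\widehat P_*$ and the accompanying scale-invariant $\epsilon$-regularity for the approximately-harmonic maps $u_i$, which requires carefully tracking how the three-point condition at the degenerating scale $\epsilon_{i,2}$ interacts with the M\"obius freedom used to build the rescalings $\hat f_i$; the role of $q<\sqrt2$ and the appearance of $\delta^{1-1/q}$ come from this balance, and verifying that the chosen reverse-H\"older exponent actually survives the boundary stationarity condition with the degenerating three-point data will be the most delicate technical step.
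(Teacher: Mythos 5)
Your outline diverges from the paper's proof in essential ways, and the two most important steps contain genuine gaps.

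\textbf{Part (i).} Your Cauchy--Schwarz bound $\int|F_i'|^2\rho^{-2}\,ds\geq\alpha^2/\int\rho^2\,ds\gtrsim\alpha^2$ is correct but far too weak. It is sharp only when $F_i'\propto\rho^2$, whereas for almost-minimal maps $M_{u_i}'$ is nearly constant, which produces a stronger bound. Following your estimate through, one obtains $\|P^H_{g_i}(\Rea\Phi_i)\|_{L^2}\gtrsim\alpha^2\ell_i^{3/2}$, hence only $\ell_i\lesssim\|P^H\|^{2/3}$, which is strictly weaker than the claimed rate $\ell_i\lesssim\|P^H\|^2+\|\tens u_i\|^2$ once these quantities are small. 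The paper's Lemma~\ref{lemma:ell-upper} instead uses the pointwise bound $|M_u'(s)-M_u'(0)|\lesssim\rho^{1/2}(s)\|\tens_g u\|$, inserts it into $\alpha\leq L|M_u'(0)|+\int|M_u'(s)-M_u'(0)|\,ds$ with $L\lesssim\ell^{-1}$ to get $|M_u'(0)|\gtrsim\ell\alpha$, and then compares $|M_u'(0)|^2\int\rho^{-2}\sim\alpha^2/\ell$ against $\|\Rea(dz^2)\|_{L^2}\sim\ell^{-3/2}$, yielding the sharp $\|P^H\|\gtrsim\alpha^2\ell^{1/2}$.

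\textbf{Part (ii).} The mechanism you propose---$\varepsilon$-regularity, a Gehring reverse-H\"older gain around $q=2$, and an $L^\infty$ bound on $\Phi(\hat u_i)$ near $\widehat P_*$---is not what drives the exponent $\delta^{1-1/q}$, and it does not address the real obstruction. The dangerous poles are not at $\widehat P_*$ but at the three points $\widehat P^j$ where the three-point condition is imposed; with four possible simple boundary poles and a real trace, the space of candidate holomorphic quadratic differentials on the disc is nontrivial, so ``the classical Plateau argument'' does not force $\Phi(\hat u_\infty)\equiv 0$. The paper's proof obtains the vanishing by quantitatively controlling the \emph{residues} of $\Phi(u_i,g_i)$ at the points $P_i^j$ (Lemma~\ref{lemma:key-res}), via the Cauchy formula for $\phi_i$ and $e^z\phi_i$ on sub-cylinders combined with the exponential decay $\vartheta(s)\lesssim E_0 e^{-q\min(Y^+-s,s+Y^-)}+\ldots$ of the angular energy for any $q<\sqrt2$ (Lemma~\ref{lemma:ang-energy}); the exponent $\delta^{1-1/q}$ arises from optimising $e^\Lambda\delta$ against $e^{(1-q)\Lambda}$ at $\Lambda=-\frac1q\log\delta$, and $q<\sqrt2$ is the Wirtinger constant for the differential inequality $\vartheta''\geq q^2\vartheta-C_q\int|\tens_{g_0}u|^2$, not a reverse-H\"older exponent. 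The residue bounds are then fed into the explicit modification $m_i^-$ of Corollary~\ref{cor:chocolate}, and the $L^1$ norm of $m_i^-\,dz^2$ on discs of radius $M\eps_{i,1}$ (and on $\La$-chunks) is estimated by splitting into scales $\eps_{i,1}$, $\eps_{i,2}$ and beyond; this is where the factors $\eps_{i,2}^{-1}$ and $\abs{\log(\eps_{i,1}/\eps_{i,2})}$ in \eqref{ass:relation} and \eqref{ass:new} arise, and your annular splitting around $\widehat P_*$ is in the right spirit but aimed at the wrong point.

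\textbf{A smaller slip.} The separation condition \eqref{ass:no-linking} is not a hypothesis of Theorem~\ref{thm:B} and plays no role here; non-triviality of $\hat u_\infty$ comes from the three-point condition already built into Theorem~\ref{thm:A}(ii). The separation condition only enters later, in Section~\ref{subsec:asympt}, to verify \eqref{ass:mv-different} for the flow when the bubble forms at only one boundary curve.
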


While the focus of the present paper is the analysis of boundary bubbles, it might be worth noting that the above result applies without change also for almost minimal maps $u_i$ satisfying a non-degenerate three-point condition.

We note that if the maps $u_i$ in the above theorem also satisfy a (not necessarily degenerating) Plateau-boundary condition on the second boundary curve $\{Y_i^+\}\times S^1$, then we may drop the assumption \eqref{ass:mv-different} in favour of imposing the separation condition \eqref{ass:no-linking}, see Section \ref{subsec:asympt} for details.

The paper is organised as follows: \\
First, in Section \ref{sec:proofs}, we provide the proofs of the results on almost minimal cylinders, Theorem \ref{thm:A} and Theorem \ref{thm:B}.
The proof of the first part of Theorem \ref{thm:A} is based on a combination of well-understood techniques from the analysis of bubbling for harmonic maps, developed in particular in \cite{bubbles-1}, \cite{bubbles-2}, \cite{bubbles-3} and \cite{Topping-bubbles}, with the Courant-Lebesgue Lemma and the regularity theory of \cite{D-S}, while the proof of the second part relies on the use of a compactness result for `almost-meromorphic functions', stated in Lemma \ref{lemma:key_holo} and proven in Section 3. 
A crucial part of the present work is to establish strong control of the Hopf-differential, both globally on the domain and in particular near the points where the three-point condition is imposed and where poles can form.
This control is used both to show that the formation of a boundary bubble forces the domain to degenerate (in the setting of Theorem \ref{thm:B} with rate \eqref{est:upper-ell-thm}), and also to show that  the Hopf-differential of the harmonic bubble is not just meromorphic but holomorphic, and hence zero, thereby establishing that the bubble is indeed a (branched) minimal immersion. To this end, we shall prove in Section \ref{sec:residues} that the residues at potential poles of almost minimal maps are controlled in terms of the tension and the horizontal part of the Hopf differential and use this result, which is stated in detail in Lemma \ref{lemma:key-res}, to complete the proof of Theorem \ref{thm:B} in Section \ref{subsect:proof-thmB}. Based on these general results on almost minimal cylinders, we may then derive the asymptotic convergence of the Teichm\"uller harmonic map flow \eqref{eq:fluss-metric}, \eqref{eq:fluss-map} in the final part \ref{subsec:asympt} of Section \ref{sec:proofs}. 

\textbf{Acknowledgement:}
The second author was supported by Engineering and Physical Sciences Research Council [EP/L015811/1].
\section{Proof of the main results}
\label{sec:proofs}
In this section, we assemble the proofs of all of our main results, with some of the arguments based on lemmas that are proven later on in Sections \ref{sec:compact} and \ref{sec:residues}.
To this end, we will first establish our main results concerning almost minimal maps from cylinders, stated in Theorems \ref{thm:A} and \ref{thm:B}, and later apply these results to derive the claimed asymptotic convergence of the flow in Section \ref{subsec:asympt}.

Before we begin with the proofs of our main theorems, we collect the results used in their proofs. 

To begin with, we recall that the stationarity condition implies that the trace of the Hopf-differential on the boundary curve is real, see Appendix \ref{subsect:app-stat} for details. 
As the horizontal space $H(g)$ can be equivalently characterised as the real part 
$H(g)=\Rea(\mathcal{H}(C_0,g))$ of the space $\mathcal{H}(C_0,g)$ of holomorphic quadratic differentials whose traces on $\partial C_0$ are real, the assumption of smallness of the horizontal part $P^H_g(\Rea(\Phi_i))$ of the Hopf-differentials in our main theorems can thus be thought of as $\Phi_i$ having only a small holomorphic part. 

Conversely, the antiholomorphic derivative of the Hopf-differential is controlled in terms of $\De_{g} u$, namely 
$\norm{\dbar \Phi(u,g)}_{L^1(C_0,g)}\leq C E(u,g)^{1/2}\cdot \norm{\De_g u}_{L^2(C_0,g)}$, 
and a key tool in the proof of Theorem \ref{thm:A} is the following compactness result for almost-meromorphic  Sobolev functions 
which is a generalisation of \cite[Lemma 2.3]{RT} and which is proven in Section \ref{sec:compact}. 

\begin{lemma}\label{lemma:key_holo}
Let $\phi_i\in L^1(\Om,\C)$, $\Om\subset \C $ open, be 
a sequence of functions for which 
\beq \label{ass:antihol}
\sup_i \norm{\dbar \phi_i}_{L^1(\Om)}+\norm{\phi_i}_{L^1(\Om)}<\infty.
\eeq
Suppose that 
there exist sets $S_i=\{p_i^j, j=1\ldots N\} \subset \Om$, for some $N\in\N$, 
so that $\phi_i\in W^{1,1}_{loc}(\Om\setminus S_i)$ and let $\Om'\subset\subset \Om$.
Then, after passing to a subsequence,
\beq
\label{claim:converg}
\tilde \phi_i:= \phi_i-\sum_j\res_{p_i^j}(\phi_i)\cdot  \frac{1}{z-p_i^j}\to \tilde \phi_\infty \text{ strongly in } L^1(\Om'),\eeq
for a limiting function $\tilde \phi_\infty$ which is furthermore holomorphic in the case that $\norm{\dbar \phi_i}_{L^1(\Om)}\to 0$. 
\end{lemma}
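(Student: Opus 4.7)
The plan is to reduce to a standard compactness argument for almost-holomorphic $L^1$ functions by exploiting the fact that once the principal parts at the singularities $p_i^j$ are subtracted, the resulting functions $\tilde\phi_i$ have no hidden distributional atoms: $\bar\partial \tilde\phi_i = \bar\partial\phi_i$ as a genuine $L^1(\Omega)$ function, since the $\pi c_i^j \delta_{p_i^j}$ produced by the distributional $\bar\partial$ of $\phi_i$ at each pole is exactly cancelled by $\bar\partial(c_i^j/(z-p_i^j))$.

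First I would pass to a subsequence so that the $N$ singular points $p_i^j$ converge to limits $p_\infty^j\in\bar\Omega$ (some possibly on or outside $\partial\Omega$), and group the points by coincident limits into clusters. The key technical step is to bound the residues $c_i^j:=\res_{p_i^j}(\phi_i)$ uniformly in $i$. For an isolated pole (a singleton cluster) contained in some fixed disk $B_R(p_i^j)\subset\Omega''\subset\subset\Omega$ which meets no other $p_i^{j'}$, the Cauchy--Pompeiu formula gives
$$c_i^j=\frac{1}{2\pi\i}\int_{\partial B_r(p_i^j)}\phi_i(z)\,dz-\frac{1}{\pi}\int_{B_r(p_i^j)}\bar\partial\phi_i\,dA,$$
and a Fubini argument supplies a radius $r\in[R/2,R]$ for which $\|\phi_i\|_{L^1(\partial B_r)}\leq (2/R)\|\phi_i\|_{L^1(\Omega)}$, yielding $|c_i^j|\leq C(R)(\|\phi_i\|_{L^1(\Omega)}+\|\bar\partial\phi_i\|_{L^1(\Omega)})$. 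Clusters with several points require an iteration: the same formula bounds the total cluster residue at each scale, and refining to subdisks that isolate individual points of the cluster (using the separation scales within the cluster after further extraction of subsequences) controls each $c_i^j$ individually.

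With bounded residues, $\tilde\phi_i$ is uniformly bounded in $L^1(\Omega'')$ for any intermediate $\Omega'\subset\subset\Omega''\subset\subset\Omega$, and I would then decompose $\tilde\phi_i=h_i+T_i$ on $\Omega''$, where $T_i(z):=-\frac{1}{\pi}\int_{\Omega''}\frac{\bar\partial\phi_i(w)}{w-z}\,dA(w)$ is the Cauchy transform of $\bar\partial\phi_i$ (so that $\bar\partial T_i=\bar\partial\phi_i$) and $h_i:=\tilde\phi_i-T_i$ is holomorphic on $\Omega''$. Standard estimates give $\|T_i\|_{L^p(\Omega'')}\leq C_p\|\bar\partial\phi_i\|_{L^1}$ for any $1\leq p<2$; consequently $h_i$ is uniformly bounded in $L^1(\Omega'')$, and by Montel's theorem a subsequence converges locally uniformly on $\Omega'$ to a holomorphic $h_\infty$. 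A subsequence of $T_i$ converges in $L^1(\Omega')$ via dominated convergence applied to the weak-$*$ limit of $\bar\partial\phi_i\rightharpoonup\mu$, using the local $L^1$-integrability of the kernel $1/(w-z)$. In particular, when $\|\bar\partial\phi_i\|_{L^1(\Omega)}\to 0$ we have $T_i\to 0$ in $L^p$, so $\tilde\phi_\infty=h_\infty$ is holomorphic as required.

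The main obstacle I expect is the uniform bound on individual residues when several $p_i^j$ cluster at a common accumulation point: the Cauchy--Pompeiu formula controls only the total residue of a cluster, and a careful multi-scale extraction--with additional subsequences chosen according to the relative separation rates $|p_i^{j_\alpha}-p_i^{j_\beta}|$ and to the decay of $\phi_i$ on annuli between scales--will be needed either to isolate individual poles or to replace the subtraction in \eqref{claim:converg} by a cluster-adapted principal part whose $L^1(\Omega')$ convergence follows without individual residue bounds.
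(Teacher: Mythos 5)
Your approach overlaps with the paper's in its starting point (subtract the principal parts, observe that the distributional $\dbar$-atoms cancel) but diverges in the crucial step, and the place where you flag your own main obstacle is exactly where the argument breaks down.

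The individual residues $\res_{p_i^j}(\phi_i)$ are \emph{not} uniformly bounded when several $p_i^j$ collide, and no amount of subsequence extraction will make them so. A concrete example on $\Om=D_1$: with $\delta_i\to 0$, let
$\phi_i(z)=\tfrac{1}{\log(1/\delta_i)}\cdot \tfrac{1}{z^2-\delta_i^2}$, with $S_i=\{\pm\delta_i\}$. Then $\phi_i$ is holomorphic off $S_i$, $\norm{\phi_i}_{L^1}$ stays bounded, but $\res_{\pm\delta_i}(\phi_i)=\pm\tfrac{1}{2\delta_i\log(1/\delta_i)}\to\infty$. So the Cauchy--Pompeiu bound on the total cluster residue is the best one can do, and your proposed decomposition $\tilde\phi_i=h_i+T_i$ cannot be launched: $h_i$ being ``uniformly bounded in $L^1(\Om'')$'' needs $\norm{\tilde\phi_i}_{L^1(\Om'')}$ to be uniformly bounded, and you only tried to control that through individual residue bounds, which fail. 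Your fallback of replacing the subtraction in \eqref{claim:converg} by a ``cluster-adapted principal part'' would prove a different (and weaker) statement, which would not feed into Corollary~\ref{cor:key-lemma} as the paper needs; there the exact form $\sum_j\res_{p_i^j}(\phi_i)/(z-p_i^j)$ is used to extract boundedness of the per-cluster residue sums.

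The point you are missing, and which is what Lemma~\ref{lemma:mollified} in the paper supplies, is that $\tilde\phi_i$ \emph{is} uniformly bounded in $L^1$ on compact subsets without any individual residue bound. The reason is a cancellation in the contour integral rather than in the residues themselves: if $B$ is a disc containing both a point $w$ and a singular point $p_i^j$, then
\begin{equation*}
\int_{\partial B}\frac{dz}{(z-w)(z-p_i^j)}=2\pi\i\Big(\frac{1}{w-p_i^j}+\frac{1}{p_i^j-w}\Big)=0,
\end{equation*}
so the Cauchy--Pompeiu representation of $\tilde\phi_i$ on $B$ sees only the boundary trace of $\phi_i$ and the $L^1$ function $\dbar\phi_i$, independently of how large the $\res_{p_i^j}(\phi_i)$ are. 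The paper encodes exactly this in the mollified quantity $\phi^{(r)}$ of Lemma~\ref{lemma:mollified}, and then works with annular mollifications of $\tilde\phi_i$ at two scales adapted to $\eps$ and to the distances $|p_i^j-p_\infty^j|$; precompactness of these piecewise-smooth approximations plus the uniform estimate $\norm{\tilde\phi_i-\tilde\phi_i^\eps}_{L^1(\Om')}\leq C\eps$ then yields the claim by a diagonal argument. Your Cauchy-transform/Montel skeleton would work once you have the $L^1$ bound on $\tilde\phi_i$, and your treatment of $T_i$ (with the kernel-truncation dominated-convergence step) is fine, but as it stands the proposal has a genuine missing idea rather than a mere detail to be filled in.
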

Here and in the following, 
the residues are defined by 
$
\res_{p_i^j}(\phi_i):=\frac{1}{2\pi \i}\lim_{r\to 0} \int_{\partial D_r(p_i^j)} \phi\, dz
$.

We use in particular the following consequence of the above lemma, whose proof is also provided in Section \ref{sec:compact}.

\begin{cor}
\label{cor:key-lemma}
Let $\phi_i:\Om= [0,\La)\times S^1\to \C$ be a sequence of functions
for which \eqref{ass:antihol} holds and suppose that  
$\phi_i\in W^{1,1}_{loc}(\Om\setminus S_i)$ 
 away from sets 
$S_i=\{p_i^j, j=1,2,3\}\subset \{0\}\times S^1$ and that the traces of $\phi_i$
 on $\{0\}\times S^1\setminus S_i$ are real.
\\
Suppose furthermore that the singular points converge to limit points
$p_i^j\to p_\infty^j$ and that 
$$\phi_i\to \phi_\infty \text{ in } L^1_{loc}(\Om\setminus S_\infty), \qquad S_{\infty}=\{p_\infty^j, j=1,2,3\}.$$
Then 
\beq \label{eq:squirrel}\psi_i:=\phi_i-\sum_j a_i^j h_{p_i^j}(z) \to \phi_\infty \text{ in } L^1_{loc}(\Om),\eeq
where 
\beq
\label{def:mod-fun}
h_{p}(z)=\frac{\i}{2\tanh(\half (z-p))}, \qquad z=s+\i\th \in \Om,\eeq
and where the numbers $a_i^j\in\R$ are chosen as follows:
\\
If $p_\infty^j$ does not coincide with any of the other limit points $p_\infty^l$ then $a_i^j=0$, while in the case that $k=\abs{\{l:p_\infty^l=p_\infty^j\}}\geq 2$, we set 
\beq a_i^j=\res_{p_i^j}(\phi_i)-\frac1k \sum_{l:p_\infty^l=p_\infty^j}  \res_{p_i^l}(\phi_i).
\eeq
\end{cor}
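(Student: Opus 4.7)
My plan is to reduce the corollary to Lemma~\ref{lemma:key_holo} via Schwarz reflection across the boundary arc $\{0\}\times S^1$, followed by a careful reorganisation of the resulting residue sum according to the collision structure of the $p_i^j$ at the limit.

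First, exploiting the real trace condition I would extend $\phi_i$ by Schwarz reflection to the doubled cylinder $(-\Lambda,\Lambda)\times S^1$: set $\hat\phi_i(s,\theta):=\phi_i(s,\theta)$ for $s\ge 0$ and $\hat\phi_i(s,\theta):=\overline{\phi_i(-s,\theta)}$ for $s<0$. Reality of the trace ensures continuity at $\{s=0\}$ away from $S_i$, and the chain rule for Wirtinger derivatives gives $\bar\partial\hat\phi_i(z)=-\overline{(\bar\partial\phi_i)(-\bar z)}$ on the reflected half, so $\|\hat\phi_i\|_{L^1}$ and $\|\bar\partial\hat\phi_i\|_{L^1}$ on the doubled cylinder are at most twice their originals. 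Since the $p_i^j$ are fixed by $z\mapsto -\bar z$, the singular set is still $\{p_i^j\}_j$, and the symmetry $\hat\phi_i(-\bar z)=\overline{\hat\phi_i(z)}$ automatically forces the residues to have the correct phase so that the $h_p$-coefficients appearing below are real. I would then apply Lemma~\ref{lemma:key_holo} to $\hat\phi_i$ on the doubled cylinder (lifted to a vertical strip in the universal cover, and using the $2\pi i$-periodicity in $\theta$ to rewrite the sum over translates of the planar Cauchy kernel $1/(z-p)$ as the periodic kernel $h_p$, which is the unique meromorphic function on the cylinder with a simple pole of residue $i$ at $p$) to obtain, after passing to a subsequence,
\begin{equation*}
\hat\phi_i-\sum_j\res_{p_i^j}(\phi_i)\,h_{p_i^j}(z)\longrightarrow\tilde\phi_\infty\quad\text{in }L^1\loc((-\Lambda,\Lambda)\times S^1).
\end{equation*}

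Next, I would regroup the singular sum according to the accumulation points: for each $p_*\in S_\infty$ with multiplicity $k=|\{l:p_\infty^l=p_*\}|$, set $\bar r_i:=\tfrac{1}{k}\sum_{l:p_\infty^l=p_*}\res_{p_i^l}(\phi_i)$ and split
\begin{equation*}
\sum_{l:p_\infty^l=p_*}\res_{p_i^l}(\phi_i)\,h_{p_i^l}(z)=\bar r_i\sum_{l:p_\infty^l=p_*}h_{p_i^l}(z)+\sum_{l:p_\infty^l=p_*}a_i^l\,h_{p_i^l}(z).
\end{equation*}
The averaged kernel satisfies $\sum_l h_{p_i^l}\to k\,h_{p_*}$ in $L^1\loc(\Omega)$ — smoothly on compact subsets away from $p_*$, and with the contribution near $p_*$ controlled uniformly by the integrability of the simple pole. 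Testing the $L^1\loc$-convergence $\phi_i\to\phi_\infty$ on $\Omega\setminus S_\infty$ against a cut-off localised near $p_*$ then forces the averages $\bar r_i$ themselves to converge along the subsequence, as every other contribution in the above representation is already known to converge.

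Subtracting exactly the deviation terms $\sum_j a_i^j h_{p_i^j}$ from $\phi_i$ (with $a_i^j=0$ in the isolated case, and as defined in the statement in each collision case) thus yields
\begin{equation*}
\psi_i=\tilde\phi_\infty\big|_\Omega+\sum_{p_*\in S_\infty}k(p_*)\,\bar r_\infty(p_*)\,h_{p_*}(z)+o_{L^1\loc(\Omega)}(1),
\end{equation*}
and matching with the hypothesis $\phi_i\to\phi_\infty$ on $\Omega\setminus S_\infty$ identifies the right-hand side with $\phi_\infty$ on all of $\Omega$. The main obstacle is the collision case: the individual residues $\res_{p_i^l}(\phi_i)$ may oscillate as $i\to\infty$ and cannot be expected to converge separately, so one must check that only the group average $\bar r_i$ matters for the $L^1\loc$-convergence of $\psi_i$. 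The choice of $a_i^l$ in the statement is tailored precisely to this end: it removes the oscillatory part while retaining the convergent averaged singular contribution, and making the argument rigorous ultimately rests on the uniform $L^1\loc$-control of the kernel family $\{h_p\}_p$ together with the Schwarz-reflection set-up described above.
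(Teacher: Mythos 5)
Your proposal follows the same overall strategy as the paper (Schwarz reflection, application of Lemma~\ref{lemma:key_holo}, regrouping the residue sum by collision clusters), but it leaves the decisive analytic step as a gap rather than proving it.

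The crux of the corollary is not the decomposition of $\sum_j\res_{p_i^j}(\phi_i)h_{p_i^j}$ into an averaged part and a deviation part — that is elementary — but the claim that the deviation part $\sum_{l}a_i^l h_{p_i^l}$ tends to zero in $L^1_{loc}(\Omega\setminus S_\infty)$ as the $p_i^l$ in a cluster collide. You explicitly flag this as ``the main obstacle,'' but then settle for ``making the argument rigorous ultimately rests on uniform $L^1_{loc}$-control of the kernel family,'' which is not a proof. Nothing in the hypotheses bounds the individual residues, so nothing prevents the coefficients $a_i^l$ from blowing up at the rate $\eps_i^{-1}$ (with $\eps_i$ the cluster diameter), in which case $a_i^l(h_{p_i^l}-h_{p_i^{l'}})$ would \emph{not} tend to zero away from the cluster. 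The paper resolves this with a quantitative computation: writing the deviation term near one cluster as $\tilde m(w)=b_1\bigl(\tfrac1{w-\de_1}-\tfrac1w\bigr)+b_2\bigl(\tfrac1w-\tfrac1{w+\de_2}\bigr)$ and exploiting the uniform $L^1$-bound on these functions (inherited from $\|\phi_i\|_{L^1}+\|\tilde\phi_i\|_{L^1}$) to extract the estimates $\abs{b_1\de_1+b_2\de_2}\le C\abs{\log\eps}^{-1}$ and $\abs{(b_1-b_2)\de_1\de_2}\le C\eps$. These are what make the deviation term vanish away from $S_\infty$, and they are also what justify replacing the planar Cauchy kernel of Lemma~\ref{lemma:key_holo} by the cylinder kernel $h_p$ via a Taylor expansion of $f(w)=\frac1w-\frac1{2\tan(\frac12w)}$. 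Your proposal contains neither of these ingredients.

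Two further points. First, your step ``testing the $L^1_{loc}$-convergence against a cut-off localised near $p_*$ forces $\bar r_i$ to converge, as every other contribution is already known to converge'' is circular: the deviation term is precisely the contribution whose convergence is \emph{not} yet known, so it cannot be used to isolate $\bar r_i$. The paper instead obtains the boundedness (hence convergence along a subsequence) of the cluster sums $\sum_{l}\res_{p_i^l}(\phi_i)$ directly from $\|\phi_i\|_{L^1}+\|\tilde\phi_i\|_{L^1}<\infty$, without appealing to the deviation term at all. Second, applying Lemma~\ref{lemma:key_holo} with $h_p$ substituted for $\tfrac1{z-p}$ ``by periodicity'' is not a literal application of the lemma as stated; the paper applies the lemma verbatim on a fundamental domain with the planar kernel and then proves $\|\widetilde M_i-M_i\|_{L^1(\Omega')}\to0$ as a separate claim, again using the coefficient estimates above. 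You would need to do the same.
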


\begin{rmk}
\label{rmk:mod-no-horiz}
We note that since the functions $h_p$ by which we modify are such that 
$h_p(\overline{z-p})=-\overline{h_p(z-p)},$ we have that 
$\int_{S^1} \Rea(h_p(s+\i \th))\, d\th=0$
for every $s\in\R$.
 In particular, the corresponding quadratic differentials $h_p\cdot dz^2$ on a cylinder $([-Y^-,Y^+]\times S^1, \rho^2(ds^2+d\th^2))$ have no horizontal part, 
$$P_g^H(\Rea(h_p dz^2))=
 \frac{\Rea(dz^2)}{\norm{\Rea(dz^2)}_{L^2}^2}
\int_{-Y^-}^{Y^+}2\rho^{-2} \int_{S^1} 
\Rea(h_p(s+\i\th))\, d\th \, ds=0.
$$
\end{rmk}

The above compactness results will be crucial to gain control of the Hopf-differential near the concentration points $P_*^\pm$ where the bubbles form and where we have no chance of obtaining strong enough convergence of the map to be able to gain the necessary control on the Hopf-differential directly. They will also prove to be useful to analyse the Hopf-differential near points where the three-point condition is imposed, whether these points $p_i^j$ converge to a mutual limit point $P_*^\pm$ or  to distinct limits $P_\infty^{j,\pm}$.

Conversely, on half-discs $D_R^+(x_0)$ away from the points where the 
three-point condition is imposed, we can control the $H^2$-norm of maps using the following lemma which is a consequence of the regularity theory developed by Duzaar-Scheven in \cite{D-S}, see in particular \cite[Theorem 8.3]{D-S}.
\begin{lemma}\label{lemma:H2-estimate}[Corollary of \cite[Theorem 8.3]{D-S}]
Let $\Gamma\subset\mathbb{R}^n$ be a $C^3$ Jordan curve and let $\Gamma'\subset\Gamma$ be any sub-arc. Suppose $u$ is an element of the space $H^1_{\Gamma'}(D_R^+)$ of all $H^1$ functions on the half-disc $D_R^+=\{(x,y)\in D_R:y\geq0\}\subset \C$ for which $u\vert_{\{y=0\}\cap D_R}$ is a weakly monotone parametrisation of $\Gamma'$ and suppose that, for some $f\in L^2(D_R^+)$,
\beq\label{ineq:var-ineq}
\int_{D_R^+}\na u\cdot \na w\,dx + \int_{D_R^+}w\cdot f\,dx\geq 0 \text{ for all } w\in T_u^+H^1_{\Gamma'}(D_R^+).
\eeq
Then $u\in H^2(D^+_{R/2})$ and, for a constant $C>0$ that depends only on $\Gamma$, the modulus of continuity 
of $u$ on $D_{R}\cap \{y=0\}$, and an upper bound on the energy $E(u,g)$,
$$\int_{D^+_{R/2}}|\na^2u|^2\,dx\leq \frac{C}{R^2}\int_{D_{R}^+}|\na u|^2\,dx +C\int_{D_{R}^+}|f|^2\,dx.$$
\end{lemma}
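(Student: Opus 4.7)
\textbf{Proof proposal for Lemma \ref{lemma:H2-estimate}.}

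The plan is to reduce the statement directly to the boundary $H^2$ regularity result of Duzaar--Scheven \cite[Theorem 8.3]{D-S}, treating the inequality at hand as an inhomogeneous perturbation of the pure Plateau problem they consider. Their theorem gives boundary $H^2$ regularity together with a quantitative estimate for minimisers (and, more generally, almost-minimisers) of the Dirichlet energy subject to a weakly monotone parametrisation of a $C^3$ Jordan arc $\Gamma'$, and the backbone of that argument is a tangential difference-quotient / obstacle analysis at the Plateau boundary combined with the standard interior difference-quotient method. The $f$-term that we have here does not interfere with either ingredient: it is a fixed $L^2$ perturbation which contributes to the inequalities exactly like an inhomogeneity in a linear elliptic problem.

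The first step is to unpack the variational inequality. For tangential outer variations of the form $w=\eta^2(\tau_h u)$ with $\eta$ a cut-off supported in $D_R^+$ and $\tau_h$ a tangential difference quotient in the direction of $\{y=0\}$, the admissibility $w\in T_u^+H^1_{\Gamma'}(D_R^+)$ is guaranteed by the weakly monotone boundary parametrisation, and the Duzaar--Scheven machinery yields uniform $L^2$ bounds for tangential second derivatives on $D_{R/2}^+$. The new contribution $\int w\cdot f$ produced by the forcing is controlled by Cauchy--Schwarz and Young's inequality, giving precisely the additive term $C\|f\|_{L^2(D_R^+)}^2$. Normal second derivatives are then recovered from the equation $\Delta u=f$ in the distributional sense on $D_{R/2}^+$ (a consequence of the same inequality applied to test fields with compact support in the interior), using the already controlled tangential components.

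The second step is to extract the $R$-dependence by scaling. Centering the flat part of $\partial D_R^+$ on $\{y=0\}$ and setting $\tilde u(x)=u(Rx)$ on the unit half-disc $D_1^+$, the Dirichlet integral is scale invariant in two dimensions and the rescaled forcing $\tilde f(x)=R^2f(Rx)$ satisfies $\|\tilde f\|_{L^2(D_1^+)}=R\|f\|_{L^2(D_R^+)}$, while $\|\nabla^2\tilde u\|_{L^2(D_{1/2}^+)}^2=R^2\|\nabla^2 u\|_{L^2(D_{R/2}^+)}^2$. Applying the unit-scale estimate and rescaling back produces exactly the factor $R^{-2}$ in front of the Dirichlet integral and a constant in front of $\|f\|_{L^2(D_R^+)}^2$. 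The constants depend only on the geometry of $\Gamma$, the modulus of continuity of $u$ along $D_R\cap\{y=0\}$ (which sets the scale on which one can linearise the Plateau condition into a hyperplane-obstacle problem, as in \cite{D-S}), and the energy bound.

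The main obstacle is the careful bookkeeping needed to check that admissible perturbations in the tangent cone $T_u^+H^1_{\Gamma'}(D_R^+)$ actually produce enough difference quotients to run the Duzaar--Scheven scheme: in particular, one must ensure that tangential translates of $u$ along $\{y=0\}$ stay weakly monotone after small perturbations and that the resulting one-sided inequalities can be upgraded to two-sided $L^2$ estimates by combining them with the reverse inequalities obtained from backward difference quotients, as done in \cite[\S8]{D-S}. Once this compatibility is verified, the remainder is routine and yields the claimed bound.
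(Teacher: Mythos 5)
The paper gives no detailed proof of this lemma, merely citing \cite[Theorem 8.3]{D-S} and remarking that the linearity of the variational inequality \eqref{ineq:var-ineq} is what obviates the smallness-of-energy hypothesis used in that reference. Your reconstruction --- tangential difference quotients at the Plateau boundary with the $f$-term absorbed by Young's inequality, recovery of the normal second derivatives from the weak interior equation $\Delta u = f$, and a scaling argument for the $R$-dependence --- fills in the same picture and is consistent with the approach the authors intend.
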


We note that in contrast to \cite[Theorem 8.3]{D-S}, we do not need to 
assume smallness of energy in the above lemma since the maps we consider satisfy a \textit{linear} differential inequality of the form \eqref{ineq:var-ineq}
rather than a non-linear version of this as considered in \cite{D-S}, though in fact smallness of energy is in our case also just  
 a simple consequence of the fact that there are no non-constant harmonic maps from $S^2$ to $\R^n$, compare \cite[Lemma 3.8]{R-cyl}. 

We also recall the following well known consequence of the Courant-Lebesgue lemma (see e.g. \cite[Lemma 3.1.1]{Jost} or \cite[Lemma 4.4]{Struwe-book})
\begin{rmk} \label{rmk:Courant-Leb}
Given any $E_0>0$ and any $\eps>0$ 
we may choose $\de=\de(\eps,E_0,\Gamma^{\pm})\in (0,\half)$ so that for any map $u\in H^1_\Gamma(C)$ with $E(u,g_0)\leq E_0$ and any $p_0\in \{\pm Y^{\pm}\}\times S^1$ we have either 
$$\osc_{\{\pm Y^{\pm}\}\times S^1\cap B_\de^{g_0}(p_0)} <\eps \, \text{ or }\osc_{\{\pm Y^{\pm}\}\times S^1\cap (B_\de^{g_0}(p_0))^c} <\eps, 
$$
where here and in the following $g_0=ds^2+d\th^2$ denotes the flat metric on the cylinder. 
\\
In particular, if the three-point condition does not degenerate, then the traces $u_i\vert_{\partial C}$ of maps $u_i\in H^1_\Gamma(C)$ with uniformly bounded energy are equicontinuous. 
Conversely, if at least two of the three points $P_i^{j,\pm}$ at  which we impose the three-point condition converge to a common limit $P_*^\pm$ then the traces on $\{\pm Y^{\pm}\}\times S^1\setminus\{P_*^\pm\} $ converge to a constant locally uniformly.
\end{rmk}

\subsection{Proof of Theorem \ref{thm:A}} $ $\\
 Based on the tools collected above, we can now give the proof of our first main result. This proof is done in two main steps, first establishing that $\ell_i\to 0$ and then analysing the maps on the half-cylinders $C_i^\pm=C_i\cap \{\pm s>0\}$. This later part only uses properties of the maps on $C_i^\pm$ and is thus applicable also in more general situations, including the setting of Theorem \ref{thm:B}.

\begin{proof}[Proof of Theorem \ref{thm:A}]
Let $u_i:(C_i,g_i)\to \R^n$ be as in Theorem \ref{thm:A}. 
After passing to a subsequence, we can assume that the angular components $\th_i^{j,\pm}$ of the points $P_i^{j,\pm}$ at which the three-point condition is imposed converge and denote by $\th_\infty^{j,\pm}$ the resulting limits. 
We furthermore recall that since the boundary curves are disjoint and the energy of the maps is bounded uniformly, the length $L_i=Y_i^++Y_i^-$ of the cylinders is bounded away from zero by a uniform constant and hence the numbers $\ell_i$ are bounded from above, compare (3.10) in \cite{R-cyl}, so we may assume that $\ell_i\to \ell_\infty\geq 0$.

We first prove that a degeneration of (at least one of) the three-point conditions forces the cylinders $C_i$ to degenerate, i.e.~$\ell_\infty$ to be zero. 
Suppose that this is not the case. 
After passing to a subsequence, we may thus assume that $Y_i^\pm\to Y_\infty^\pm<\infty$ and hence $P_i^{j,\pm}\to P_\infty^{j,\pm}=(\pm Y_\infty^\pm, \th_\infty^{j,\pm})$. We set 
  $S_\infty:=\{P_\infty^{j,\pm}\}$ and $C_\infty=[-Y_\infty^-,Y^+_\infty]\times S^1$.
This convergence allows us to choose diffeomorphisms 
$f_i:C_\infty\to C_i$ so that $f_i(\pm Y_\infty^\pm\mp s,\th)=(\pm Y_i^\pm\mp s,\th)$ for all $s\in [0,\La]$ for some $\La>0$, i.e.~with $f$ conformal in
neighbourhoods $\Om^\pm=\{(s,\th): \abs{s\mp Y_\infty^\pm}\leq \La\}$  of the boundary curves, and so that the pulled back metrics converge
\beq
\label{conv:metric-proof-thmA-1}
f_i^*g_i\to g_\infty=\rho_{\ell_\infty}^2(ds^2+d\th^2) \text{ smoothly on }  C_\infty.\eeq
The uniform $H^2$-bounds on compact subsets of $C_\infty\setminus S_\infty$ from Lemma \ref{lemma:H2-estimate} allow us to pass to a subsequence so that on $C_\infty\setminus S_\infty$,
\beq 
\label{conv:map-proof-thmA-1} 
v_i:= u_i\circ f_i\to u_\infty \text{ weakly in } H^2_{loc} \text{ and strongly in } W^{1,q}_{loc} \text{ for every }  q<\infty.
\eeq
Viewing  $v_i$ as maps from the limiting surface $(C_\infty,g_\infty)$,
we hence know that the corresponding Hopf-differentials $\Phi(v_i,g_\infty)=\phi(v_i,g_\infty)dz^2$ converge in particular
$$\phi(v_i,g_\infty)\to \phi(u_\infty,g_\infty) \text{ strongly in } L^1_{loc}(C_\infty\setminus S_\infty).$$
Combining the uniform $H^2$-bounds on the maps with the convergence of the metrics \eqref{conv:metric-proof-thmA-1} and the 
assumption that $\norm{\Delta_{f_i^*g_i}v_i}_{L^2(C_\infty,f_i^*g_i)}=\norm{\Delta_{g_i}u_i}_{L^2(C_i,g_i)}\to 0$ implies that 
$$\norm{\Delta_{g_\infty}v_i}_{L^2(K,g_\infty)}\to 0 \text{ on compact sets } K \text{ of } C_\infty\setminus S_\infty$$
so that the obtained limit map is harmonic. 

In the neighbourhoods $\Om^\pm$ of the boundary where $f_i$ is conformal and indeed $f_i^*g_i=\beta_i^\pm(s)^2g_\infty$ for some $\beta_i^\pm\to 1$, we also have that 
$$\norm{\Delta_{g_\infty}v_i}_{L^2(\Om^\pm,g_\infty)}\leq \sup \beta_i^\pm\cdot \norm{\Delta_{f_i^*g_i}v_i}_{L^2(\Om^\pm,f_i^*g_i)}\leq C\norm{\Delta_{g_i}u_i}_{L^2(C_i,g_i)}\to 0$$
and hence in particular 
$$ \norm{\dbar \Phi(v_i,g_\infty)}_{L^1(\Om^\pm,g_\infty)}
\leq C\norm{\Delta_{g_\infty}v_i}_{L^2(\Om^\pm,g_\infty)}\to 0.
$$
We also recall that the Hopf-differential depends only on the conformal structure,
\beq 
\label{eq:Hopf-diff-agree} 
\Phi(v_i,g_\infty)=\Phi(u_i\circ f_i, f_i^*g_i)=f_i^* \Phi(u_i,g_i) \text{ on } \Om^\pm,\eeq
 so  that the stationarity condition implies that the traces of $\phi(v_i,g_\infty)$ on $\partial C_i$ are real, compare Appendix \ref{subsect:app-stat}. 
We may thus apply Corollary \ref{cor:key-lemma} to conclude that 
\beq 
\psi_i:=\phi(v_i,g_\infty)-\sum_{j,\pm} a_i^j h_{\pm Y_\infty^\pm+\i\th_i^{j,\pm}}\to
 \phi_\infty \text{ strongly in } L^1(C_\infty,g_\infty),
 \label{conv:mod-hopf-proof}\eeq
 where $a_i^j$ and $h_\cdot$ are as in that corollary.

We now first consider the case that the three-point condition degenerates on both of the boundary curves, i.e.~that both $\eps_\infty^\pm:=\lim_{i\to \infty}\eps_i^\pm=0$. In this situation, we obtain from the Courant-Lebesgue lemma that the traces of $v_i$ on both boundary curves converge to constant maps locally uniformly away from the two concentration points $P_*^\pm$, compare Remark \ref{rmk:Courant-Leb}.  
As the limit map $u_\infty$ is thus constant on both boundary curves and harmonic on the whole cylinder, it must be described by a parametrisation (proportional to arclength) $u_\infty(s,\th)=\al(s)$ of a straight line that connects two points $Q^\pm$ on the disjoint curves $\Gamma^\pm$ and that is hence non-trivial. 
As a consequence, the Hopf-differential of the limit map takes the form 
$c\cdot dz^2$ for some $c>0$, i.e.~$\Rea(\Phi(u_\infty,g_\infty))$ is a non-trivial element of $H(g_\infty)$ and has thus in particular non-zero projection onto $H(g_\infty)$. 

This implies that 
$$\norm{P^H_{g_\infty}(\Rea(\Phi(v_i,g_\infty)))}_{L^2(C_\infty,g_\infty)}\nrightarrow 0$$
as the projection of a quadratic differential $\Psi$ onto $H(g_\infty)$ is simply $\langle \Rea(\Psi),\Rea(dz^2)\rangle \frac{\Rea(dz^2)}{\norm{\Rea(dz^2)}_{L^2}^2}$, so combining the strong $L^1$-convergence of the modified Hopf-differentials $\psi_idz^2$ obtained from Corollary \ref{cor:key-lemma} with the fact that the 
modification we made has no horizontal part, compare 
Remark \ref{rmk:mod-no-horiz}, implies that 
\beqas
P^H_{g_\infty}(\Rea(\Phi(v_i,g_\infty)))=P^H_{g_\infty}(\Rea(\psi_i dz^2))
\to P^H_{g_\infty}(\Rea(\Phi(u_\infty,g_\infty)))\neq 0 .
\eeqas 
However, combining the fact that the metrics $g_\infty$ and $f_i^*g_i$ are conformal in a neighbourhood of the boundary, and the resulting relation \eqref{eq:Hopf-diff-agree}, with the convergence of the maps and metrics \eqref{conv:metric-proof-thmA-1}, \eqref{conv:map-proof-thmA-1} away from the boundary also implies that 
\beqs
\norm{P^H_{g_\infty}(\Rea(\Phi(v_i,g_\infty)))-P^H_{f_i^*g_i}(\Rea(\Phi(v_i,f_i^*g_i)))}_{L^2(C_\infty,g_\infty)}\to 0\eeqs
and hence that 
\beqas
\lim_{i\to \infty} \norm{P^H_{g_\infty}(\Rea(\Phi(v_i,g_\infty)))}_{L^2(C_\infty,g_\infty)}= \lim_{i\to \infty} \norm{P^H_{g_i}(\Rea(\Phi(u_i,g_i)))}_{L^2(C_i,g_i)}=0,\eeqas
leading to a contradiction in this first case where we assumed that $\ell_\infty>0$ and $\eps_\infty^+=\eps_\infty^-=0$.

Suppose now that still $\ell_\infty>0$ but that only one of the $\eps_\infty^\pm$ is zero, say 
$\eps_\infty^+>0=\eps_\infty^-$. 
Repeating the above argument, we obtain a limit $u_\infty:(C_\infty,g_\infty)\to \R^n$ which is harmonic, constant on $\{-Y_\infty^-\} \times S^1$, and
whose Hopf-differential is real on the boundary curves and holomorphic away from the points $P_\infty^{j,\pm}$ where it might a priori have poles of order one. 
However, since $\eps_\infty^+>0$, the assumptions of the theorem ensure that 
the projection of the Hopf-differential onto the space $\Var^+(g_i)$, corresponding to changes of $(b^+,\phi^+)$, tends to zero as well. By  \cite[Lemma 3.3]{R-cyl}, compare also \cite[Proof of Theorem 2.7(i)]{R-cyl}, this implies that 
 the 
stationarity condition holds for the limit $u_\infty$ for all vector fields supported in a neighbourhood of the boundary curve $\{Y_\infty^+\}\times S^1$ 
rather than just for such vector fields that satisfy the constraint $Y(P_\infty^{j,+})=0$. 
By Remark \ref{lemma:no-poles}, this excludes the possibility that $\Phi(u_\infty,g_\infty)$ has poles at points on the boundary curve $\{Y_\infty^+\}\times S^1$. 
On the other hand, 
 as $u_\infty$ is constant on $\{-Y_\infty^-\}\times S^1$, 
 standard regularity theory yields that $u_\infty$ is smooth
in a neighbourhood of the other boundary curve $\{-Y_\infty^-\}\times S^1$
so that $\Phi(u_\infty,g_\infty)$ may in particular not form any poles there either. 
As the stationarity condition ensures that $\Phi(u_\infty,g_\infty)$ is real on both boundary curves, $\Rea(\Phi(u_\infty,g_\infty))$ is hence again an element of 
 $H(g_\infty)$, and thus, by the argument from above, must be zero. 

This implies that the map $u_\infty:C_\infty\to\mathbb{R}^n$ is harmonic, conformal, and constant on one of the boundary curves and hence has vanishing normal derivative there, and so may be reflected across that curve to give a map from $(-Y_\infty^--L_\infty,Y^+)\times S^1$ which, by \cite{G-O-R}, is either constant or a minimal immersion away from finitely many points. As $\na u_\infty=0$ on $\{-Y_\infty^-\}\times S^1$, we would thus need $u_\infty$ to be constant which contradicts the fact that 
$u_\infty\vert_{\{Y_\infty^+\}\times S^1}$ parametrises $\Gamma^+$. 

Having thus established that  
$\ell_i\to 0$
as claimed in the theorem and hence that $Y_i^\pm\to \infty$, we now turn to the analysis of the shifted maps
$u_i^\pm=u_i\circ f_i^\pm$, $f_i^\pm(s,\th)=(\pm Y_i^\pm \mp s,\th)$, which are defined on larger and larger sub-cylinders 
$[0,Y_i^\pm)\times S^1$ of $C_\infty=[0,\infty)\times S^1$. 
As these maps and the following analysis use only the information on the corresponding half-cylinders $C_i^\pm=\{\pm s>0\}\cap C_i$ and the fact that $Y_i^\pm\to\infty$, 
we remark that the following argument is applicable not only in the setting of Theorem \ref{thm:A} but also in the setting of Theorem \ref{thm:B} (there once we have established that $\ell_i\to 0$).

Repeating the argument from the first part of the proof, now applied to $u_i^\pm$ instead of $v_i$, we conclude that, after passing to a subsequence,
the maps converge away from $S_\infty^\pm:=\{(0,\th_\infty^{j,\pm})\}$,
$$u_i^\pm\to u^\pm_\infty \text{ weakly in }H^2_{loc}(C_\infty\setminus S_\infty^\pm) \text{ and strongly in } W^{1,q}_{loc}(C_\infty\setminus S_\infty^\pm), \,q<\infty, 
$$ to a limit map $u^\pm_\infty$ which is harmonic, has finite energy and which may thus be extended across the puncture to a harmonic map defined over the disc using the removability of singularities theorem of Sacks-Uhlenbeck \cite{S-U}.

The obtained maps $u_\infty^\pm:C_\infty\to \R^n$ have the following properties:
if $\eps_\infty^\pm=0$ then, by Remark \ref{rmk:Courant-Leb}, the traces of $u_i^\pm$ converge locally uniformly to a constant $q^\pm\in \Gamma^\pm$ on $\{\pm Y_\infty^\pm\}\times S^1\setminus \{P_*^\pm\}$, so the limit map $u_\infty^\pm$ is constant on the boundary of $C_\infty$ and hence on the whole cylinder. 
Conversely, if one of the $\eps_\infty^\pm>0$, then the stationarity condition \eqref{eq:stationar} is satisfied for all 
vectorfields $X$ with support in a neighbourhood of the corresponding boundary curve $\{\pm Y_\infty^\pm\} \times S^1$ and thus Remark \ref{lemma:no-poles} ensures that $\Phi(u_\infty^\pm,g_\infty)$ is real and 
has no poles on $\{0\}\times S^1$. 
Extending $\Phi(u_\infty^\pm,g_\infty)$ by reflection to the whole of $(-\infty,\infty)\times S^1$ gives a quadratic differential which must be represented by a holomorphic function on $(-\infty,\infty)\times S^1$  with finite $L^1$-norm, i.e. zero, so we must have that have that $\Phi(u_\infty^\pm,g_\infty)\equiv 0$.  The limit map $u_\infty^\pm$ is thus not only harmonic but also conformal, and hence a (possibly branched) minimal immersion. This completes the proof of part (i) of the theorem.

Before we analyse the bubbles, we note that since the diffeomorphisms $f_i^\pm$ are conformal on the whole cylinder and so, in particular, in a neighbourhood of the boundary, the analysis of the Hopf-differential carried out in the first part of the proof still applies. Thus, after modifying the Hopf-differentials as in Corollary \ref{cor:key-lemma}, we obtain strong local $L^1$-convergence to the Hopf-differential of these limit maps $u_\infty^\pm$, i.e.~to zero;
 see Corollary \ref{cor:chocolate} below for a precise statement. 

We now analyse the bubbles forming at the concentration point
 $P_*^-$ (if $\eps_\infty^-=0$), with the same argument of course applying also for $u_i^+$ in the case that also $\eps_i^+\to 0$.
We first pull back the maps $u_i^-:[0,Y_i^-)\times S^1\to \R^n$
 by the 
conformal diffeomorphism
$re^{\i \th}\mapsto (s=-\log(r), \th)$ to annuli $D\setminus D_{r_i}$ in the punctured unit disc and further pull-back the resulting maps by the unique M\"obius transform $M_{b_i^-,\phi_i^-}:D\to D$ which maps $e^{\i \frac{2\pi j}{3}}$ to $e^{\i\th_i^{j,-}}$ so that, overall, the points $P_{i}^{j,-}$ at which we imposed the three-point condition are pulled-back to $e^{\i \frac{2\pi j}{3}}$. 
 As our subsequence was chosen so that the angles $\th_i^{j,-}$ converge, we know that $b_i^-\to b^-_\infty\in \partial D$ and $\phi_i^-\to \phi^-_\infty$. Hence  
the resulting conformal diffeomorphisms
$\hat f^-_i:\Om^-_i\subset D\to  [0,Y^-_i)\times S^1$
 converge to a constant map away from the point $\widehat P_*^-=-b_\infty^-\in \partial D$.
The domains $\Om_i^-$, on which these diffeomorphisms, and hence the pulled back maps $\hat u_i ^-=u_i^-\circ \hat f^-_i$,  
are defined, exhaust $D\setminus \{\widehat P_*^{-}\}$. As all involved diffeomorphisms are conformal, we note that the pulled-back metrics are given by
$\hat\beta_i^2 g_{D}$ for conformal factors $\hat\beta_i$ that converge to zero away from $\widehat P_*^-$.

The rescaled maps $\hat u_i^-$ are thus almost harmonic also with respect to the standard metric on the disc in the sense that 
\beqs\norm{\Delta_{g_{D}}\hat u_i^-}_{L^2(K,g_{D})}\leq \sup_K \hat\beta_i \norm{\Delta_{g_i}u_i}_{L^2(C_i,g_i)}
\to 0 \text{ for every } K\subset\subset D\setminus \{\widehat P_i^-\},\eeqs so, using Lemma \ref{lemma:H2-estimate} and the Courant-Lebesgue lemma, we can conclude that a subsequence converges to a harmonic map $\hat u_\infty^-$ which spans $\Gamma^-$, where the obtained convergence is uniform on the boundary and the usual local weak $H^2$ and strong $W^{1,q}$ ($q<\infty$)  convergence away from the concentration point $\widehat P_*^-
$ and away from the three points $e^{\i \frac{2\pi j}{3}}$ at which the three-point condition is imposed. We furthermore remark that \cite[Lemma 3.11] {R-cyl} excludes the possibility that energy concentrates at the three points $e^{\i \frac{2\pi j}{3}}$ so that the maps also converge  strongly in $H^1_{loc}(D\setminus\{\widehat P_*^-\})$.
Since the stationarity condition is invariant under conformal changes of the metric, we thus obtain that 
$\int \Rea(\Phi(u_\infty,g_\infty))\cdot L_Xg \,dv_g=0 $
holds true for every vector field $X$ that is supported in $D\setminus \{\widehat P_*^-\}$ and that furthermore satisfies $X(e^{\i\frac{2\pi j}{3}})=0$, which completes the proof of the theorem.
\end{proof}

It is important to observe at this point that the above proof yields that, for $i$ large, the Hopf-differential on $C_i^-$ is essentially described by the modification $\sum_j a_i^{j,-}h_{P_i^{j,-}} dz^2$ obtained in  Corollary \ref{cor:key-lemma}, which in turn is determined only by the residues of the Hopf-differential at the three points $P_i^{j,-}$. As observed above, this argument is applicable not only in the setting of the result that we have just proven, but once we prove that the assumptions of Theorem \ref{thm:B} imply that $\ell_i\to 0$ also in that situation, so it is important to record that we have shown in particular:

\begin{cor}\label{cor:chocolate}
Let $u_i$ be a sequence of maps 
from  
cylinders $C_i$ (as in \eqref{def:cyl} and \eqref{ass:cyl})
with uniformly bounded energy which are so that $u_i\vert_{\{-Y_i^-\}\times S^1}$ are weakly monotone parametrisations of  
 a $C^3$ Jordan curve $\Gamma$ and so that the $u_i$ satisfy 
the stationarity condition \eqref{eq:stationar} at $\{-Y_i^-\}\times S^1$ except at three points $P_i^j=(-Y_i^-,\th_i^{j,-})$ for which furthermore $u_i(P_i^j)=Q^j$,  $Q^{1,2,3} \in \Gamma$ distinct points, and $\th_i^{j,-}\to \th_{\infty}^{j,-}$.
\\
Suppose that these maps are almost harmonic, i.e.~that $\norm{\De_{g_i} u_i}_{L^2(C_i,g_i)}\to 0$, and that $\ell_i\to 0$. Let $\phi_i=\phi(u_i,g_i)$ be the functions describing the Hopf-differentials and let  
$m_i^-:=\sum_j a_i^{j,-}h_{P_i^j}$ be defined as in Corollary \ref{cor:chocolate}, i.e.~in the case that all limits $\th_\infty^{j,-}$ are distinct we have $m_i^-\equiv 0$, while 
if $k\in \{2,3\}$ of these limits agree, say $\th_{\infty}^{1,-}=\ldots =\th_{\infty}^{k,-}$, then 
\beq \label{eq:mod-fn}
m_i^-=\sum_{1\leq j\leq k}
\res_{P_i^{j,-}}(\phi_i) \cdot h_{P_i^{j,-}}-\bigg(\frac{1}{k}\sum_{1\leq j\leq k}\res_{P_i^{j,-}}(\phi_i)\bigg) \cdot  \sum_{1\leq j\leq k}h_{P_i^{j,-}}.\eeq
Then the Hopf-differentials of $u_i$ are essentially described by the 
corresponding quadratic differentials $M_i^-=m_i^- dz^2$
in the sense that, for every $\La<\infty$,
\beq 
\label{conv:Hopf-mod-to-zero}
\norm{\Phi(u_i,g_i)-M_i^-}_{L^1(K_i^{-,\La},g_i)}\to 0 \text{ as } i\to \infty\eeq 
on `chunks' $K_i^{-,\La}:=\{(s,\th): 0\leq Y_i^{-}+ s\leq \La\}$ of 
size $\La$ around $\{-Y_i^-\}\times S^1$. 
\end{cor}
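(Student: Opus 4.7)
The plan is to derive the corollary as a specialisation of the half-cylinder analysis carried out in the second half of the proof of Theorem~\ref{thm:A}. First, I would shift the maps by setting $u_i^-(s,\th):=u_i(-Y_i^-+s,\th)$ on $[0,Y_i^-)\times S^1$; since $\ell_i\to 0$ forces $Y_i^-\to\infty$, these domains exhaust $[0,\infty)\times S^1$. Combining Lemma~\ref{lemma:H2-estimate}, Remark~\ref{rmk:Courant-Leb} and the standard Sacks--Uhlenbeck removability argument at the puncture $s=\infty$ \cite{S-U} yields, after passing to a subsequence, the convergence $u_i^-\to u_\infty^-$ weakly in $H^2_{loc}$ and strongly in $W^{1,q}_{loc}$ for every $q<\infty$ away from the singular set $S_\infty^-=\{(0,\th_\infty^{j,-})\}$, where the limit $u_\infty^-$ is a finite-energy harmonic map that extends smoothly across $s=\infty$.

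The crucial step is to show $\Phi(u_\infty^-)\equiv 0$. Passing the stationarity condition to the limit forces $\Phi(u_\infty^-)$ to have real boundary trace on $\{0\}\times S^1\setminus S_\infty^-$, while the uniform $L^1$-bound on the $\phi_i$ implies that the only admissible singularities of $\Phi(u_\infty^-)$ at $S_\infty^-$ are simple poles. Pulling back to the closed unit disc via $w=e^{-z}$ (across which $u_\infty^-$ extends to a genuine harmonic map on $\bar D$), $\Phi(u_\infty^-)$ becomes a meromorphic quadratic differential on $\bar D$ with real boundary trace and at most three simple poles on $\partial D$. Schwarz reflection across $\partial D$ then extends this to a meromorphic quadratic differential on $S^2$ whose only poles are those (at most three) simple poles on the equator and which in particular is holomorphic at the image of the disc's centre, i.e.\ at $\infty$. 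Since a meromorphic quadratic differential on $S^2$ has total degree $-4$ whereas at most three simple poles contribute $\geq -3$ and zeros contribute only non-negative amounts, a direct degree-count obstruction forces $\Phi(u_\infty^-)\equiv 0$.

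With $\Phi(u_\infty^-)\equiv 0$ in hand, it remains to apply Corollary~\ref{cor:key-lemma} to $\phi_i$ on the region $\Om=[0,2\La)\times S^1$ (working in the shifted coordinates and using the conformal invariance of the $L^1$-norm of a quadratic differential to switch freely between $g_i$ and the flat metric), with singular sets $S_i^-=\{(0,\th_i^{j,-})\}$. Its hypotheses follow routinely: $\sup_i\norm{\phi_i}_{L^1(\Om)}$ is controlled by the uniform energy bound, $\norm{\dbar\phi_i}_{L^1(\Om)}\leq CE_0^{1/2}\norm{\De_{g_i}u_i}_{L^2}\to 0$ by almost harmonicity, the stationarity assumption supplies real boundary traces, interior $H^2$-regularity gives $\phi_i\in W^{1,1}_{loc}(\Om\setminus S_i^-)$, the singular points converge by hypothesis, and the convergence of the first paragraph together with $\Phi(u_\infty^-)\equiv 0$ delivers $\phi_i\to 0$ in $L^1_{loc}(\Om\setminus S_\infty^-)$. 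Corollary~\ref{cor:key-lemma} then yields $\phi_i-m_i^-\to 0$ in $L^1_{loc}(\Om)$, and translating back to the chunks $K_i^{-,\La}$ gives the claim. The principal hurdle is the intermediate step of establishing $\Phi(u_\infty^-)\equiv 0$ without the $\norm{P^{\Var^-}(\Rea(\Phi))}\to 0$-type assumption exploited in the proof of Theorem~\ref{thm:A} via Remark~\ref{lemma:no-poles}; one must instead allow for simple boundary poles at the limits of the marked points and close the argument via the global Riemann-surface obstruction on $S^2$ outlined above.
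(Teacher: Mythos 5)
Your proof is correct and reaches the paper's conclusion, but it takes a genuinely different route at the central step of establishing $\Phi(u_\infty^-)\equiv 0$. Steps 1, 2 and 4 of your argument (shifting to the half-cylinder, the $H^2_{loc}$/$W^{1,q}_{loc}$ convergence and Sacks--Uhlenbeck removability, and the concluding application of Corollary~\ref{cor:key-lemma} together with conformal invariance of the $L^1$-norm of quadratic differentials) coincide with what the paper does inside the second half of the proof of Theorem~\ref{thm:A}. The paper, however, handles step 3 by a case distinction: when the three-point condition degenerates ($\eps_\infty^-=0$), it invokes Remark~\ref{rmk:Courant-Leb} to conclude that $u_\infty^-$ has constant boundary values, hence is constant by the maximum principle on the extended disc, so $\Phi(u_\infty^-)=0$ trivially; when it does not degenerate, it uses the hypothesis $\norm{P_g^{\Var^-}(\Rea\Phi)}_{L^2}\to 0$ of Theorem~\ref{thm:A} to pass full stationarity to the limit (Remark~\ref{lemma:no-poles}), exclude all boundary poles, and then conclude via the fact that a holomorphic $L^1$-quadratic differential on the doubled infinite cylinder vanishes. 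Your Schwarz-reflection-plus-degree-count on $S^2$ treats both cases at once: at most three simple boundary poles (excluded above order one by $L^1$-integrability), holomorphic at both $0$ and $\infty$ after reflection, and $\deg K^{\otimes 2}_{S^2}=-4<-3$ then forces $\Phi(u_\infty^-)\equiv 0$. This is the classical Douglas--Courant conformality argument. Two remarks on the comparison. First, your approach is more self-contained relative to the corollary as literally stated: the corollary's hypotheses do not include the $\Var^-$-projection decay, so in the non-degenerate subcase (all $\th_\infty^{j,-}$ distinct) the paper's proof of Theorem~\ref{thm:A} would not directly apply, whereas your degree argument closes this without extra assumptions. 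Second, in the degenerate case $k\geq 2$ --- which is the only case the corollary is actually invoked for, in the proof of Theorem~\ref{thm:B} --- the Courant--Lebesgue constancy argument is quicker than the degree count. One small imprecision worth tidying: it is the $L^1$-bound on $\phi_\infty$ itself (coming from $E(u_\infty^-)<\infty$), rather than on the approximating $\phi_i$, that excludes poles of order $\geq 2$; and the real boundary trace of $\Phi(u_\infty^-)$ away from the singular set should be justified by passing the stationarity identity \eqref{eq:stationar} to the limit for vector fields vanishing at $P_\infty^{j,-}$, which requires a short approximation of the test vector fields (since the constraint points $P_i^{j,-}$ move with $i$).
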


\subsection{Proof of Theorem \ref{thm:B}}
\label{subsect:proof-thmB}
$ $\\
The analysis carried out in the previous section implies that a lack of control on the residues of the Hopf-differential at the points where the three-point condition is imposed is 
the main obstruction to the obtained bubbles being not only harmonic but indeed minimal. We will obtain the required control on these residues as a consequence of Lemma \ref{lemma:key-res} that we state below and that will be proven in Section \ref{sec:residues}.

To state this lemma we 
consider as usual  maps from 
 a cylinder of the form
\beq
\label{def:cyl+-}
(C,g):=\big([-Y^-,Y^+]\times S^1,\, \rho^2_\ell(s)(ds^2+d\th^2)\big), \text{ where }Y^\pm=\tfrac{2\pi}{\ell}(\tfrac{\pi}2 -c^\pm \ell)\geq 1
\eeq
 for some $c^\pm\in [c_1,c_2]$, $\ell\in (0,\ell_0)$, where $\ell_0$, $c_1$ and $c_2$ are arbitrary fixed, positive constants.

\begin{lemma}
\label{lemma:key-res}
For any $\al, E_0, c_{1,2}>0$ there exists $\bar\ell=\bar\ell (\al,E_0, c_{1,2})>0$, determined in Lemma \ref{lemma:ell-upper} below,  so that the following holds.
\\
Let $u\in H^1(C,\R^n)$ be a map on a cylinder as above for which $\ell\leq \bar \ell$ with $E(u,g)\leq E_0$ and $\De_g u\in L^2(C,g)$. Suppose that $u$ satisfies the stationarity condition \eqref{eq:stationar} on 
 $\{-Y^-\}\times S^1$ except at three points $P^{1,2,3}=(-Y^-,\th^j)$ 
 which are ordered so that
\beqa
\label{def:epsilon-wants-cookies}
\th^1=\th^2-\eps_1<\th^2<\th^3=\th^2+\eps_2 \text{ for some } 0<\eps_1\leq \eps_2\leq 2\pi-(\th^3-\th^1),
\eeqa 
that the Hopf-differential $\Phi=\phi dz^2$ of $u$ is in $W^{1,1}_{loc}(C\setminus\{P^{1,2,3}\})$ and that the mean values $M_{u}(\pm Y^{\pm}):=\fint_{\{\pm Y^{\pm}\}\times S^1} u \, d\th$ are so that
\beq
\label{ass:MV-lower}
0<\alpha\leq \abs{M_{u}(-Y^{-})-M_{u}(Y^{+})}.
\eeq
Then the residues of the Hopf-differential  at the points $P^j$ satisfy the following estimates:
\beq
\label{claim:sum-res}
\abs{\res_{P^1}(\phi)+\res_{P^2}(\phi)+\res_{P^3}(\phi)}\leq C\norm{\tens_g u}_{L^2(C,g)}+C \exp(-c\norm{P_g^H(\Rea(\Phi))}_{L^2(C,g)}^{-2}),
\eeq
while for every $q<\sqrt{2}$  
\beq 
\label{claim:res} 
\eps_1\eps_2 (\abs{\res_{P^1}(\phi)}+\abs{\res_{P^2}(\phi)}) +\eps_2^2 \abs{\res_{P^3}(\phi)} \leq C_q\big[\norm{\tens_g u}_{L^2(C,g)}^{1-\frac{1}{q}}+\norm{P^H_g(\Rea(\Phi))}_{L^2(C,g)}^{4(1-\frac{1}{q})}\big]
\eeq
and 
\beq 
\label{claim:eps-sum-res} 
\abs{\eps_2\res_{P^3}(\phi) -\eps_1\res_{P^1}(\phi)}\leq C_q \big[\norm{\tens_g u}_{L^2(C,g)}^{1-\frac{1}{q}}+\norm{P^H_g(\Rea(\Phi))}_{L^2(C,g)}^{4(1-\frac{1}{q})}\big]
\eeq
for constants $C,C_q>0$ that in addition to $\al, E_0$ and $c_{1,2}$ are allowed to depend on upper bounds on $\norm{\De_g u}_{L^2}$ and $\abs{M_u(-Y^-)-M_u(Y^+)}$ and for $C_q$ additionally on the choice of $q<\sqrt{2}$.
\end{lemma}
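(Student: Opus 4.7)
The plan is to analyse the Hopf-differential $\Phi = \phi\, dz^2$ on the half-cylinder adjacent to $\{-Y^-\}\times S^1$ by exploiting three inputs: (a) a pointwise relation between $\dbar \phi$ and $\De_g u$ giving $\norm{\dbar \phi}_{L^1} \leq C E(u,g)^{1/2}\norm{\tens_g u}_{L^2}$; (b) the stationarity condition \eqref{eq:stationar}, which forces the trace of $\phi$ on $\{-Y^-\}\times S^1\setminus\{P^j\}$ to be real, so that $\phi$ Schwarz-reflects to an almost-meromorphic object with simple poles at the $P^j$ on a doubled cylinder; and (c) the fact that $P_g^H(\Rea \Phi)$ controls the ``constant mode'' of $\phi$ in a weighted $L^2$-sense, as in Remark \ref{rmk:mod-no-horiz}.

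For the sum of residues in \eqref{claim:sum-res}, I would integrate $\phi$ along a horizontal circle $\{s = -Y^-+\de\}\times S^1$. By Stokes and the $\dbar$-bound this integral equals $2\pi\i\sum_j\res_{P^j}(\phi)$ up to an error controlled by $\norm{\tens_g u}_{L^2}\cdot \de^{1/2}$, and the integral itself is the zeroth Fourier mode of $\phi$ at depth $\de$. Using the $\dbar$-bound once more, this mode is almost $s$-independent, and its bulk average is governed by $\norm{P_g^H(\Rea\Phi)}_{L^2}$ through the weight $\rho_\ell^{-2}$. The superpolynomial correction $\exp(-c\norm{P_g^H(\Rea\Phi)}_{L^2}^{-2})$ arises by optimising the depth $\de$: on the long collar the non-constant Fourier modes of a near-holomorphic function decay exponentially in $s$, so pushing $\de$ inward by a distance $\sim \norm{P_g^H(\Rea\Phi)}_{L^2}^{-2}$ trades an exponentially small leakage from the non-zero modes against a $\dbar$-error which remains $O(\norm{\tens_g u}_{L^2})$.

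For \eqref{claim:res} and \eqref{claim:eps-sum-res}, I would test the stationarity condition against tangential vector fields $X$ on $\{-Y^-\}\times S^1$ localised on the scale of $\eps_1, \eps_2$ around each $P^j$ and vanishing at all three poles. The linear map sending such an $X$ to $\tfrac14\int \Rea\Phi\cdot L_X g\, dv_g + \int du(X)\cdot\De_g u\, dv_g$ vanishes by \eqref{eq:stationar}; after integration by parts and Schwarz reflection, the first term picks up precisely a linear combination of the residues with coefficients dictated by the boundary values of $X$ on the scales $\eps_j$, while the second is bounded by $\norm{\tens_g u}_{L^2}\norm{\na X}_{L^2}$ and the remaining piece is absorbed into the projection $P_g^H(\Rea\Phi)$ via a standard decomposition. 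Inverting this nearly-singular three-parameter system yields the stated weights $\eps_1\eps_2,\eps_2^2$ and the combination $\eps_2\res_{P^3}-\eps_1\res_{P^1}$. Control of $\phi$ away from the poles by its almost-meromorphic decomposition from Lemma \ref{lemma:key_holo} together with the mean-value hypothesis \eqref{ass:MV-lower} (which, via the smallness $\ell\leq\bar\ell$ from Lemma \ref{lemma:ell-upper}, forces a definite lower bound on the ``stretching'' part of $\Rea\Phi$) closes the loop.

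The main obstacle is producing the sharp $q<\sqrt 2$ dependence. To extract a residue one must integrate $\phi$ against a bounded test function on a circle of radius $\sim \eps_j$, which requires pointwise or near-pointwise control of $\phi$; but $\dbar\phi$ is only in $L^1$, so the Calderón-Zygmund representation of $\phi$ gives an $L^p$ bound only for $p$ strictly below $2$, and duality against the scale-$\eps_j$ test functions then allows exponents $q$ strictly below the conjugate, i.e.\ $q<\sqrt 2$ after further interpolation against the $W^{1,1}_{loc}$-regularity of $\phi$ away from the $P^j$. This critical interpolation, together with the need to track the combinatorics of the three nearly-coincident poles, is what produces the fourth-power $\norm{P_g^H(\Rea\Phi)}^{4(1-1/q)}$ and the exponent $1-1/q$ on the tension, and is where the careful work of Section \ref{sec:residues} will be concentrated.
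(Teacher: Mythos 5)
Your overall architecture — use stationarity to make $\phi$ real on $\{-Y^-\}\times S^1$, reflect, extract residues from contour integrals, and use the mean-value hypothesis via Lemma \ref{lemma:ell-upper} to make corrections small — is on the right track, and your treatment of \eqref{claim:sum-res} is close in spirit to the paper's: the paper integrates Cauchy's formula for $\phi$ over $[-2Y^-+\la,-\la]\times[-\pi,\pi]$ and averages over $\la\in[0,1]$. However, the exponential term in \eqref{claim:sum-res} does \emph{not} arise from optimising an inward depth; the paper integrates only at depth $\la\in[0,1]$. The smallness comes from the angular energy estimate of Lemma \ref{lemma:ang-energy}, which yields $\vartheta(s)\lesssim E_0 e^{-q\min(Y^+-s,\,s+Y^-)}+\dots$, together with Lemma \ref{lemma:ell-upper}'s bound $Y^-\geq c\ell^{-1}\gtrsim \norm{P_g^H(\Rea\Phi)}_{L^2}^{-2}$. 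The depth-optimisation device $\Lambda=-\tfrac{1}{q}\log\de$ is used only for the \emph{weighted} residue sum, not here.

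There are two genuine gaps in how you propose to get \eqref{claim:res}--\eqref{claim:eps-sum-res} and the exponent $q<\sqrt 2$. First, the $q<\sqrt 2$ threshold has nothing to do with Calder\'on--Zygmund $L^p$ theory or duality against $\eps_j$-scale test functions. It is the decay rate in the differential inequality $\vartheta''(s)\geq q^2\vartheta(s)-C_q\int|\tens_{g_0}u|^2$ derived in Lemma \ref{lemma:ang-energy}; the $2$ in $\vartheta''(s)=2\int(|u_{s\th}|^2+|u_{\th\th}|^2-u_{\th\th}\cdot\tens_{g_0}u)$, combined with Wirtinger, caps the achievable rate strictly below $\sqrt 2$. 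Since you then balance $e^{\Lambda}$ against the $e^{-q\Lambda}$ decay of the angular energy, the cutoff $\Lambda=-\tfrac{1}{q}\log\de$ produces exactly the exponent $1-\tfrac1q$; no interpolation with $W^{1,1}_{\mathrm{loc}}$-regularity of $\phi$ is involved, and Lemma \ref{lemma:key_holo} plays no role in this lemma's proof. Second, you would not test stationarity against tangential vector fields localised at scale $\eps_j$ near each pole; instead the paper integrates $e^z\phi$ (not just $\phi$) over the same rectangles, producing the single complex quantity $\sum e^{\i\th^j}\res_{P^j}(\phi)$. Since the residues are purely imaginary, this together with \eqref{claim:sum-res} gives three real linear constraints $A_{\eps_1,\eps_2}(\res_{P^1},\res_{P^2},\res_{P^3})^T=O(\de^{1-1/q})$ with $A_{\eps_1,\eps_2}$ a $3\times 3$ trigonometric Vandermonde-type matrix whose determinant is $\gtrsim\eps_1\eps_2^2$; the weights $\eps_1\eps_2$, $\eps_2^2$ and the combination $\eps_2\res_{P^3}-\eps_1\res_{P^1}$ come from inverting this matrix, not from a ``nearly-singular three-parameter system'' of localised test functions. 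Finally, the power $\norm{P_g^H(\Rea\Phi)}^{4(1-1/q)}$ is traced to the quadratic bound $|M_u'(0)|^2\lesssim(\ell R)^2+\ell\norm{\tens_g u}^2$ from Lemma \ref{lemma:Mu-osc} combined with $\ell\lesssim\norm{P^H}^2+\norm{\tens}^2$, then raised to $1-\tfrac1q$ by the cutoff optimisation — not to pole combinatorics.
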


In addition, we shall need that the length of the central geodesic of such a cylinder is controlled by the following lemma, which establishes in particular the claimed rate \eqref{est:upper-ell-thm} of the degeneration in Theorem \ref{thm:B}.

\begin{lemma}
\label{lemma:ell-upper}
Let $(C,g)$ be a cylinder as described in \eqref{def:cyl+-}, let $u\in H^1(C,\R^n)$ be a map for which \eqref{ass:MV-lower} is satisfied for some $\alpha>0$ and let $\Phi$ be the corresponding Hopf-differential. Then 
 \beqa\label{est:hor-lower}
 \norm{P^H_g(\Rea(\Phi))}_{L^2(C,g)}\geq C \alpha^2 \ell^{1/2}-C\ell^{3/2}-C\ell^{-1/2}\norm{\tens_g u}_{L^2(C,g)}^2
 \eeqa
for a constant $C$ that depends only on an upper bound $E_0$ on the energy of $u$ and the constants $c_{1,2}$ and $\ell_0$ in the definition of the cylinder.
\\
In particular, there exists a number $\bar\ell=\bar\ell(\al,E_0, c_{1,2})>0$
so that if the length $\ell$ of the central geodesic of the cylinder $C$ is no more than $\ell\leq \bar\ell$, then indeed
\beq\label{est:upper-ell}
\ell\leq C\big(\norm{P_g^H(\Rea(\Phi))}^2_{L^2(C,g)}+\norm{\tens_g u}_{L^2(C,g)}^2\big),
\eeq
where $C$ now additionally depends on $\alpha$.
\end{lemma}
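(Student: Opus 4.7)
The proof splits into establishing the main estimate \eqref{est:hor-lower}, from which \eqref{est:upper-ell} follows by elementary algebra. Since the horizontal space $H(g)$ on our hyperbolic cylinder is one-dimensional, spanned by $h := \Rea(dz^2) = ds^2 - d\theta^2$, the identity $\langle\Rea\Phi,h\rangle_g = 2\rho_\ell^{-4}\Rea(\phi)$ together with $\norm{h}^2_{L^2(C,g)} = 4\pi\int\rho_\ell^{-2}ds$ yields
\[\norm{P^H_g(\Rea\Phi)}_{L^2(C,g)} = \frac{\abs{\int_C\Rea(\phi)\,\rho_\ell^{-2}\,dsd\theta}}{\sqrt{\pi}\,(\int_{-Y^-}^{Y^+}\rho_\ell^{-2}ds)^{1/2}},\]
with direct integration giving the asymptotics $\int\rho_\ell^{-2}ds = 4\pi^4\ell^{-3}(1+O(\ell))$ and $\int\rho_\ell^2 ds = O(1)$.

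The main contribution arises from the zeroth $\theta$-Fourier mode: writing $u(s,\theta) = M_u(s)+\tilde u(s,\theta)$ with $\tilde u$ of zero $\theta$-mean, one has $\int_{S^1}\Rea(\phi)d\theta = 2\pi\abs{M_u'}^2 + \int_{S^1}(\abs{\tilde u_s}^2-\abs{\tilde u_\theta}^2)d\theta$. The zero mode of $\tens_g u = \Delta_g u$ reads $M_u''(s) = \rho_\ell^2\hat\tau_0(s)$, so Cauchy--Schwarz (using $\int\rho_\ell^2 ds = O(1)$) yields $\abs{M_u'(s)-M_u'(s')}\leq C\norm{\tens_g u}_{L^2(C,g)}$ for all $s,s'$. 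The Mean Value Theorem picks $s_0$ with $M_u'(s_0) = c^* := \Delta M/L$, where $\Delta M := M_u(Y^+)-M_u(-Y^-)$ and $L := Y^++Y^-\sim 2\pi^2/\ell$; writing $M_u'(s) = c^* + r(s)$ with $\abs{c^*}\geq\alpha/L\gtrsim\alpha\ell$ and $\norm{r}_{L^\infty}\leq C\norm{\tens_g u}_{L^2}$, the leading contribution
\[2\pi\abs{c^*}^2\int_{-Y^-}^{Y^+}\rho_\ell^{-2}ds\geq C\alpha^2\ell^{-1}\]
translates, after division by the denominator of size $\sim\ell^{-3/2}$, into the required leading term $C\alpha^2\ell^{1/2}$.

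The delicate step is the error analysis. The cross term $2\abs{c^*}\abs{\int r\rho_\ell^{-2}ds}$, the zeroth-mode quadratic contribution $\int\abs{r}^2\rho_\ell^{-2}ds$, and the higher-Fourier-mode integral $\int_C(\abs{\tilde u_s}^2-\abs{\tilde u_\theta}^2)\rho_\ell^{-2}dsd\theta$ must all be fitted into the allowed $C\ell^{3/2}+C\ell^{-1/2}\norm{\tens_g u}^2$ structure. The zeroth-mode errors I would handle by combining the $L^\infty$ bound on $r$ with a case split on the relative sizes of $\abs{c^*}$ and $\norm{\tens_g u}_{L^2}$: in the branch $\abs{c^*}\geq C_0\norm{\tens_g u}_{L^2}$ Young's inequality absorbs the cross term into half the main contribution, while in the branch $\abs{c^*}\leq C_0\norm{\tens_g u}_{L^2}$ the inequality $\abs{c^*}\geq\alpha/L$ forces $\norm{\tens_g u}_{L^2}\gtrsim\alpha\ell$, making the allowed error already large enough to close the estimate via a refined Young inequality, together with the a priori bound $\abs{c^*}\leq C\sqrt{E_0\ell}$ from the energy. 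The higher-Fourier contribution is estimated by integration by parts using $\tilde u_{ss}+\tilde u_{\theta\theta} = \rho_\ell^2(\tens_g u - \hat\tau_0)$ and the energy bound $E(u,g)\leq E_0$, yielding an $O(E_0)$ bound on the integral and hence $O(\ell^{3/2})$ in $\norm{P^H_g(\Rea\Phi)}$.

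For \eqref{est:upper-ell}, choose $\bar\ell$ small enough that $C\ell^{3/2}\leq\tfrac{1}{2}C\alpha^2\ell^{1/2}$ for $\ell\leq\bar\ell$; then \eqref{est:hor-lower} rearranges to $\norm{P^H_g(\Rea\Phi)}+C\ell^{-1/2}\norm{\tens_g u}^2\geq\tfrac{C}{2}\alpha^2\ell^{1/2}$, and a two-case analysis on whether $\ell\norm{P^H_g(\Rea\Phi)}^2$ or $\norm{\tens_g u}^4$ dominates in $\ell^2\leq C_\alpha(\ell\norm{P^H_g(\Rea\Phi)}^2+\norm{\tens_g u}^4)$ yields $\ell\leq C(\norm{P^H_g(\Rea\Phi)}^2+\norm{\tens_g u}^2)$ with $C$ depending on $\alpha$. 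The main obstacle throughout is the fine error analysis in the zeroth Fourier mode: the naive $L^\infty$ bound on $r$ gives a tension-dependent error scaling like $\ell^{-1}$ rather than the required $\ell^{-1/2}$ in $\norm{P^H_g(\Rea\Phi)}$, and reconciling this discrepancy is precisely what the case split on $\abs{c^*}$ vs.\ $\norm{\tens_g u}_{L^2}$ is designed to accomplish.
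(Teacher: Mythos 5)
Your proposal shares the paper's overall skeleton — express $\norm{P^H_g(\Rea\Phi)}$ via the explicit one-dimensional projection onto $\Rea(dz^2)$, isolate the zeroth Fourier mode $|M_u'|^2$, obtain its lower bound from the mean-value hypothesis \eqref{ass:MV-lower}, and control the remaining errors by the angular energy estimates and the $L^2$ norm of the tension. Your formula $\norm{P^H_g(\Rea\Phi)} = |\int\Rea(\phi)\rho^{-2}|/(\sqrt{\pi}\,(\int\rho^{-2})^{1/2})$ and the asymptotics $\int\rho^{-2}ds\sim 4\pi^4\ell^{-3}$ are correct, and the deduction of \eqref{est:upper-ell} from \eqref{est:hor-lower} at the end is the same two-case argument as in the paper.

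However, there is a genuine gap precisely where you flag "the delicate step." Your uniform bound $\norm{r}_{L^\infty}\leq C\norm{\tens_g u}_{L^2(C,g)}$ is correct (since $\int\rho^2\,ds=O(1)$), but it is too lossy when inserted into $\int|r|^2\rho^{-2}\,ds$: you get $C\norm{\tens_g u}^2\int\rho^{-2}\,ds\sim C\norm{\tens_g u}^2\ell^{-3}$, which after dividing by $\sim\ell^{-3/2}$ contributes $C\norm{\tens_g u}^2\ell^{-3/2}$ to the estimate for $\norm{P^H_g(\Rea\Phi)}$. This overshoots the allowed term $C\ell^{-1/2}\norm{\tens_g u}^2$ by a factor of $\ell^{-1}$, not by $\ell^{-1/2}$ as you state, and your case split cannot repair this. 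In the branch $|c^*|\geq C_0\norm{\tens_g u}_{L^2}$, Young's inequality does absorb the quadratic term into the main term, but in the complementary branch $|c^*|\leq C_0\norm{\tens_g u}_{L^2}$ the quadratic error $\norm{\tens_g u}^2\ell^{-3/2}$ can swamp the claimed lower bound: the bound $\norm{\tens_g u}_{L^2}\gtrsim\alpha\ell$ you derive only makes the allowed error $\gtrsim\alpha^2\ell^{3/2}$, which is still smaller than the main term $\alpha^2\ell^{1/2}$, so the inequality does not become vacuous and the excess $\ell^{-1}$ factor survives. In short, the case split changes which side is dominant but never removes the mismatch in powers of $\ell$.

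The missing ingredient is the \emph{weighted} pointwise estimate of Lemma \ref{lemma:Mu-osc}, $\abs{M_u'(s)-M_u'(0)}\leq C\rho^{1/2}(s)\norm{\tens_g u}_{L^2(C,g)}$, obtained by applying Cauchy--Schwarz with the weight $\rho^2$ and the computation $\int_0^s\rho^2\leq\rho(s)$. Because $\rho(s)\sim\ell$ near the central geodesic, precisely where $\rho^{-2}$ concentrates its mass, this yields $\int|M_u'(s)-M_u'(0)|^2\rho^{-2}\,ds\leq C\norm{\tens_g u}^2\int\rho^{-1}\,ds\sim C\norm{\tens_g u}^2\ell^{-2}$, which is a full factor of $\ell$ better and, after dividing by $\ell^{-3/2}$, lands exactly on the required $C\ell^{-1/2}\norm{\tens_g u}^2$. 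The paper also anchors the expansion at $M_u'(0)$ (rather than at your averaged $c^*$), which is what makes this weighted bound directly applicable; the lower bound $|M_u'(0)|\geq c\alpha\ell - C\ell^{1/2}\norm{\tens_g u}_{L^2}$ of \eqref{est:Mu0-lower} then replaces your $|c^*|\geq\alpha/L$. To fix your proof you need to replace the uniform $L^\infty$ bound on $r$ by this $\rho^{1/2}$-weighted bound.
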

In the special case of maps which are both harmonic and conformal, the above lemma of course just reduces to the well known fact that the conformal structures of solutions to the Douglas-Plateau problem are constrained in terms of the energy and $\alpha$. The above lemma should however \textit{not} be seen as a generalisation of this fact to maps which are almost harmonic and almost conformal as control on the horizontal part of the Hopf-differential and on $\De_{g} u$ is insufficient to control the full Hopf-differential $\Phi$ due to the poles of $\Phi$ in the points where the three-point condition is imposed.

Instead, this lemma will be a key ingredient in proving that almost harmonic maps whose Hopf-differentials have small horizontal part are indeed almost conformal in situations where a boundary bubble forms. 
While the proof of this lemma is given in Section \ref{sec:residues}, we first use it to give the proof of our main result on almost minimal maps from cylinders.
\begin{proof}[Proof of Theorem \ref{thm:B}]
Let $u_i:C_i\to \R^n$ be as in Theorem \ref{thm:B}. We first note 
that since the metrics $g_i$ are uniformly equivalent to the flat metric $g_0$ near the boundary of the cylinders $C_i$, the assumption \eqref{ass:relation}, respectively \eqref{ass:new}, involving the distances $\dist_{g_i}(P_i^j,P_i^k)$ of the points $P_i^j=(-Y_i^-,\th_i^j)$ at which the three-point condition is imposed is 
 equivalent to asking that \eqref{ass:relation}, respectively \eqref{ass:new}, is satisfied instead for the differences between the corresponding angles $\th_i^j$. Following the notation from Lemma \ref{lemma:key-res}, we can thus consider instead $\eps_{i,j}$ defined as in \eqref{def:epsilon-wants-cookies}, i.e.~after possibly reordering the points, assume that 
$$\eps_{i,1}:=\th_i^2-\th_i^1\leq \eps_{i,2}:=\th_i^3-\th_i^2\leq 2\pi-(\th_i^3-\th_i^1).$$
As the lengths $\ell_i$ of the central geodesics of the cylinders $(C_i,g_i)$ are assumed to be no more than the constant $\bar \ell$ from the above Lemma \ref{lemma:ell-upper}, we first note that this lemma implies that $\ell_i\to 0$ with the rate given in \eqref{est:upper-ell-thm}. As such we may apply the second part of the proof of Theorem \ref{thm:A} to conclude that, after passing to a subsequence, the  translated maps
$u_i^{-}:[0,Y^-_i)\times S^1\to \R^n$ form a bubble at a concentration point $P_*^-$ as described in detail in Theorem \ref{thm:A}.

As the rescaled maps  $\hat u_i^-=u_i^-\circ \hat f_i^-$ defined in Theorem \ref{thm:A} converge strongly in $H^1$ to the harmonic bubble $\hat u_\infty^-$ away from $\widehat P_*^-\in \partial D$, we note that $\hat u_\infty^-$ is conformal if and only if 
\beq
\label{frog}
\norm{\Phi(\hat u_i^-,g_{D})}_{L^1(K,g_D)}\to 0 \text{ for all } K\subset\subset D\setminus \{\widehat P_*^-\}.\eeq
Because the diffeomorphisms $\hat f^-_i: \Om^-_i\subset D\setminus\{\widehat P_*^-\}\to [0,Y_i^-)\times S^1$ are conformal and the $L^1$-norm of quadratic differentials is conformally equivalent, \eqref{frog}  is equivalent to 
\beq \label{equiv-frog}
\norm{\Phi(u_i^-,g_0)}_{L^1(\hat f_i^-(K),g_0)}\to 0 \text{ for all } K\subset\subset D\setminus \{\widehat P_*^-\},
\eeq
where $g_0=ds^2+d\th^2$. 
As we shall see below, this in turn will follow provided we show that  
\beq \label{equiv-frog-2}
\norm{\Phi(u_i,g_i)}_{L^1(D_{M\eps_{i,1}}(P_i^2),g_0)}\to 0 \text{ for every  } M<\infty,
\eeq
and we shall prove that this is the case if the tension and horizontal part of Hopf-differential decay according to \eqref{ass:relation}. Moreover, we shall see that the slightly stronger assumption \eqref{ass:new} ensures that no Hopf-differential at all can be lost on any finite length chunk around the boundary curve, namely that 
 \beq \label{conv:Hopf-to-zero} 
\norm{\Phi(u_i,g_i)}_{L^1([-Y_i^-,-Y_i^-+\La]\times S^1,g_i)}\to 0 \text{ for every } \La>0.
\eeq
To prove \eqref{equiv-frog-2} and \eqref{conv:Hopf-to-zero}, we  note that, by Corollary \ref{cor:chocolate}, the claim 
\eqref{equiv-frog-2} is equivalent  
 to \beq \label{conv:mod-fn-to-zero-frog} 
\norm{M_i^-}_{L^1(D_{M\eps_{i,1}}(P_i^2), g_0)} \to 0
\text{ for every } M>0 \eeq
for the quadratic differentials $M_i^-=m_i^-dz^2$ that are obtained by modifying the Hopf-differentials $\Phi(u_i,g_i)=\phi_i dz^2$ as described in  Corollary \ref{cor:chocolate}, while  
 \eqref{conv:Hopf-to-zero}
is equivalent to proving that
 \beq \label{conv:mod-fn-to-zero} 
\norm{M_i^-}_{L^1(K_i^{-,\La},g_0)}\to 0 \text{ on }  K_i^{-,\La}=
\{(s,\th): 0\leq Y_i^-+s\leq \La\}. \eeq
In the case that $\eps_{i,2}$ is bounded away from zero uniformly, Lemma \ref{lemma:key-res} 
yields, for $q<\sqrt{2}$, 
$$\abs{\res_{P_i^{1,2}}(\phi_i)}\leq C\eps_{i,1}^{-1}\cdot \de_i^{1-\frac{1}{q}}, \qquad \de_i \text{ defined by \eqref{ass:tension}}, $$
allowing us to bound the coefficients $a_i^{1}=-a_i^2=\half(\res_{P_i^1}(\phi_i)- \res_{P_i^2}(\phi_i))$
of the function 
$m_i^-=a_i^1(h_{P_i^1}-h_{P_i^2})$ by which we modify the Hopf-differential. 
To estimate $\norm{M_i^-}_{L^1}=2\norm{m_i^-}_{L^1}$ over either $V_i:=D_{M\eps_{i,1}}(P_i^2)\subset C_i^-$, or over $V_i:=K_{i}^{-,\La}$, we consider the shifted functions $m_i^-(\cdot -P_i^2)=a_i^1(h_{-\i\eps_{i,1}}-h_0)$ on the corresponding domains $\tilde V_i=D_{M\eps_{i,1}}(0)$, respectively $\tilde V_i=[0,\La]\times S^1$ and split these domains into the disc $D_{4\eps_{i,1}}=D_{4\eps_{i,1}}(0)$ 
and the rest of $\tilde V_i$ resulting in 
 \beqas
\norm{M_i^-}_{L^1(V_i)}&
\leq 4 \abs{a_i^1} \norm{h_0}_{L^1(D_{5\eps_{i,1}})}
+C\abs{a_i^1}  \cdot \norm{h_{0}-h_{-i\eps_{i,1}}}_{L^1(V_i\setminus D_{4\eps_{i,1}})}\\
&\leq C \de_i^{1-\frac1q} (1+\norm{z^{-2}}_{L^1(V_i\setminus D_{4\eps_{i,1}})}),
\eeqas
where the second step follows using Taylor and the fact that 
$\norm{h_0}_{L^1(D_r)}\leq Cr$ for $0<r<1$.

If \eqref{ass:relation} is satisfied we hence obtain that  
$$\norm{M_i^-}_{L^1(D_{M\eps_{i,1}}(P_i^2))}\leq C \de_{i}^{1-\frac1q}\to 0$$
while under the stronger assumption \eqref{ass:new}
 we indeed obtain that 
 $$\norm{M_i^-}_{L^1(K_{i}^{-,\La})}\leq C(\abs{\log(\eps_{i,1})}+1) \de_{i}^{1-\frac1q}\to 0.$$
 This completes the proof of the theorem in the case that the $\eps_{i,2}$ are bounded away from zero.

To show that \eqref{conv:mod-fn-to-zero} holds true also in the case where all three points $P_i^j$ converge to the same limit point, i.e.~that both $\eps_{i,1}$ and $\eps_{i,2}$ tend to zero, we note that in this situation, the function $m_i^-$ from   Corollary \ref{cor:chocolate} is characterised by  
\beqas
m_i^-(\cdot -P_i^2)=&\res_{P_i^1}(\phi_i)(h_{-\i\eps_{i,1}}-h_0)+\res_{P^3_i}(\phi_i)(h_{\i\eps_{i,2}}-h_0)\\
&-\tfrac{1}{3}\big(\sum\res_{P^j_i}(\phi_i)\big) (h_{-\i\eps_{i,1}}-2h_0+h_{\i\eps_{i,2}}).
\eeqas
To estimate the norm of this function over $V_i$ as above, 
 we now split up the domain into the disc of radius $4\eps_{i,1}$, the annulus $D_{4\eps_{i,2}}\setminus D_{4\eps_{i,1}}$ and the remaining cylinder to get 
\beqas
\norm{M_i^-}_{L^1(V_i)}&
\leq C\norm{\res_{P_i^1}(\phi_i)(h_{-\i \eps_{i,1}}-h_{0}) +\res_{P_i^3}(\phi_i)
(h_{\i\eps_{i,2}}-h_0)}_{L^1(\tilde V_i)}\\
&\quad+
C\abs{\sum\res_{P_i^j}(\phi_i)}\\
&\leq 
C\abs{\res_{P_i^1}(\phi_i)} \cdot \norm{h_{0}}_{L^1(D_{5\eps_{i,1}})}
+C\abs{\res_{P_i^3}(\phi_i)} \cdot \norm{h_{0}}_{L^1(\tilde V_i\cap D_{5\eps_{i,2}})}\\
& \quad 
+ C\abs{\res_{P_i^1}(\phi_i)}\cdot \norm{h_{-\i \eps_{i,1}}-h_{0}}_{L^1(\tilde V_i\cap (D_{4\eps_{i,2}}\setminus D_{4\eps_{i,1}}))}\\
&\quad 
+C\norm{\res_{P_i^1}(\phi_i)(h_{-\i \eps_{i,1}}-h_{0})
+\res_{P_i^3}(\phi_i)
(h_{\i\eps_{i,2}}-h_0)}_{L^1(\tilde V_i\setminus D_{4\eps_{i,2}})}\\
&\quad+C\abs{\sum\res_{P_i^j}(\phi_i)}.
\eeqas
The last term in this estimate is controlled by \eqref{claim:sum-res} which, together with 
 \eqref{ass:tension}, implies that 
 $$C\abs{\sum\res_{P_i^j}(\phi_i)} \leq C\de_i+C\exp(-c\delta_i^{-1/2})\to 0.$$
 Similarly,  we may use \eqref{claim:res} to bound the first two terms in the above estimate by 
\beqas
C\eps_{i,1} \abs{\res_{P_i^1}(\phi_i)}+C\eps_{i,2}\abs{\res_{P_i^3}(\phi_i)}
\leq C \eps_{i,2}^{-1}
  \de_i^{1-\frac{1}{q}}\to 0\eeqas
  if the weaker assumption \eqref{ass:relation} of the theorem is satisfied. 
  
Writing these three terms for short as $o(1)$
and applying Taylor's theorem to $h_{-\i\eps_{i,1}}-h_0$ and $h_{\i\eps_{i,2}}-h_0$,
we may thus apply \eqref{claim:res} and \eqref{claim:eps-sum-res} to obtain that 
\beqas 
\norm{M_i^-}_{L^1(V_i)}
\leq &\, 
C\abs{\res_{P_i^1}(\phi_i)}\cdot \eps_{i,1}\norm{z^{-2}}_{L^1(\tilde V_i\cap (D_{4\eps_{i,2}}\setminus D_{4\eps_{i,1}}))}
 \\
&
+C\abs{-\res_{P_i^1}(\phi_i)\cdot \eps_{i,1}+\res_{P_i^3}(\phi_i)\cdot \eps_{i,2}}
\cdot \norm{z^{-2}}_{L^1(\tilde V_i\setminus D_{4\eps_{i,2}})}\\
&
+C\big[ \abs{\res_{P_i^1}(\phi_i)}\cdot \eps_{i,1}^2+\abs{\res_{P_i^3}(\phi_i)} \cdot \eps_{i,2}^2\big]
 \cdot \norm{z^{-3}}_{L^1(\tilde V_i \setminus D_{4\eps_{i,2}})}+o(1)
 \\
 \leq& \, C\big[  \eps_{i,2}^{-1}
\norm{z^{-2}}_{L^1(\tilde V_i\cap (D_{4\eps_{i,2}}\setminus D_{4\eps_{i,1}}))}
  + 
 \abs{\log(\eps_{i,2})}
  + \eps_{i,2}^{-1} \big] \de_i^{1-\frac{1}{q}}+o(1)
\eeqas
In the case that $V_i=D_{M\eps_{i,1}}(P_i^2)$ we take the norm of $z^{-2}$ only over annuli $D_{M\eps_{i,1}}\setminus D_{4\eps_{i,1}} $ over which this function has bounded $L^1$ norm so that \eqref{ass:relation} again ensures that 
$$\norm{M_i^-}_{L^1(D_{M\eps_{i,1}})}\leq C\eps_{i,2}^{-1} \de_i^{1-\frac1q}\to 0.$$
Conversely, the stronger assumption \eqref{ass:new} allows us to deal with the additional $\log(\eps_{i,2}/\eps_{i,1})$ term arising if $V_i= K_i^{-,\La}$ so that in this case  \eqref{conv:mod-fn-to-zero} follows. 

Finally, to explain why \eqref{equiv-frog} follows from  \eqref{equiv-frog-2}, we recall that on $\partial D$ the angular component of $\hat f_i^-:D\to [0,Y_i^-)\times S^1$ is described by the M\"obius transform 
$M_{b_i,\phi_i}$ for which
$M_{b_i,\phi_i}(e^{\frac{2\pi j}{3}\i})=\th_i^{j}$. With angles ordered as in \eqref{def:epsilon-wants-cookies}, we can easily check that
$$\Arg(-b_i)\in [0,\tfrac{\pi}{3}] \text{ and } 1-\abs{b_i}\in [c\cdot \eps_{i,1},C\cdot \eps_{i,1}]  $$
for some universal constants $c,C>0$. As $\abs{M_{b,\phi}'(z)}=\frac{1-\abs{b}^2}{\abs{1+\bar b z}^2}\leq C_r (1-\abs{b})$ on $D\setminus D_r(b)$, we thus find that the image of $K\subset\subset D\setminus\{\widehat P_*^-=-b_\infty\} $ under $M_{b_i,\phi_i}$ is contained in small discs of the form 
$D_{\tilde M_K \eps_{i,1}}(e^{\i\th_i^{2}})\subset D$ and hence also $\hat f_i^-(K)\subset D_{M_{K} \eps_{i,1}}(P_i^{2})$ for $i$ sufficiently large.
\end{proof}

\subsection{Asymptotic analysis of the flow}
\label{subsec:asympt}$ $\\
We finally turn to the analysis of the flow and the proof of Theorem \ref{thm:1}.
So let $(u,g)_{t\in[0,\infty)}$ be a solution of the flow \eqref{eq:fluss-metric}, \eqref{eq:fluss-map} as obtained in \cite{R-cyl} which satisfies both the stationarity condition \eqref{eq:stationar} at the boundary except at $P_0^{j,\pm}(t)=(\pm Y_{\ell(t)},\frac{2\pi j}{3})$ and the energy inequality \eqref{eq:energy-cond}, which implies in particular that 
\beq
\label{est:tension}
\int_0^\infty \norm{\De_g u}_{L^2(C_0,g)}^2\, dt+
\int_0^\infty \norm{P_g^{H}(\Rea(\Phi(u,g)))}_{L^2(C_0,g)}^2\,dt<\infty\eeq
and 
\beqs
\int_0^\infty \eta(\abs{b^+})^2\norm{P_g^{\Var^+}(\Rea(\Phi(u,g)))}_{L^2(C_0,g)}^2+\eta(\abs{b^-})^2
\norm{P_g^{\Var^-}(\Rea(\Phi(u,g)))}_{L^2(C_0,g)}^2 \,dt<\infty.
\eeqs
Hence if $\max(\abs{b^+},\abs{b^-})(t)\to$ 1 as $t\to \infty$ as assumed in the theorem, we may pass to a subsequence of times $t_i\to \infty$ so that 
\beq 
\label{conv:tens-new}
\norm{\De_g u(t_i)}_{L^2(C_0,g)}+\norm{P_g^H(\Rea(\Phi(u,g)(t_i)))}_{L^2(C_0,g)}\to 0
\eeq
or, to be more precise, so that
\beq 
\label{conv:tension-flow}
\de_i:=\norm{\De_g u(t_i)}_{L^2(C_0,g)}+\norm{P_g^{H}(\Rea(\Phi(u,g)(t_i)))}_{L^2(C_0,g)}^4\leq \frac{1}{\sqrt{t_i}}
\eeq
and so that
\beq
\label{conv:b-phi-flow}
b^\pm(t_i)\to b^\pm_\infty, \quad \phi^\pm(t_i)-2\pi n_i \to \phi_\infty^\pm,
\eeq 
where at least one of the $\abs{b_\infty^\pm}=1$, while in the case that $\abs{b_\infty^\pm}<1$, moreover
\beq \label{conv:proj-flow}
\norm{P_g^{\Var^\pm}(\Rea(\Phi(u,g)(t_i)))}_{L^2(C_0,g)}^2\to 0.
\eeq
Theorem \ref{thm:A} then immediately implies that $\ell(t_i)\to 0$ and also yields parts (i) and (ii) of Theorem \ref{thm:1} for the maps $u_i=\tilde u(t_i):(C_{\ell(t_i)},g_{\ell(t_i)})\to \R^n$ defined in \eqref{def:utilde}.

We also note that we can exclude the possibility that there is another sequence of times $t_i'\to \infty$ along which $\ell(t_i')\geq \nu>0$ by applying the above argument to a sequence of nearby times $\tilde t_i$, chosen as follows. Suppose that $\ell(t_i')\geq\nu>0$. As $\abs{\frac{d\ell}{dt}}\leq C\ell^{1/2} \norm{\partial_t g}_{L^2}$, see  \cite[Appendix A.2]{R-cyl}, there exists 
$c(\nu, E_0)>0$ such that  $|\ell(s)-\ell(t)|\leq\tfrac{\nu}{2}$
for all $s,t$ with $|s-t|< c(\nu)$. 
We can thus choose $\tilde t_i$ with $|\tilde t_i-t_i'|\leq c(\nu,E_0)$ and hence $\ell(\tilde t_i)\geq \tfrac{\nu}{2}$, so that the corresponding maps $u(\tilde t_i)$ are almost minimal in the sense that
\eqref{conv:tens-new}, \eqref{conv:b-phi-flow} and \eqref{conv:proj-flow} hold at these $\tilde t_i$ which, by Theorem \ref{thm:A}, leads to a contradiction.

In order to prove that the separation condition \eqref{ass:no-linking} and the choice of the coupling function \eqref{ass:coupling} ensure that 
the obtained boundary bubbles are also conformal and hence indeed minimal,
we first recall that we not only have an orthogonal splitting of the tangent space to $\Mneu$ as described in \eqref{eq:splitting1}, but that, for $\abs{b^+}\neq 0$, the space of variations $\Var^+(g)$ induced by changes of $(b^+,\phi^+)$ furthermore splits $L^2$-orthogonally 
$$\Var^+(g)=\text{span}\{L_{Y_{\abs{b^+}}}g\}\oplus \text{span}\{L_{Y_{\Arg(b^+)}}g\} \oplus\text{span} \{L_{Y_{\phi^+}}g\}$$
into variations induced by changing only one of $\abs{b^+}$, $\Arg(b^+)$, respectively $\phi^+$, where, for example, $Y_{\abs{b^+}}$ is characterised by $L_{Y_{\abs{b^+}}}g=\frac{d}{d\abs{b^+}}h_{b,\phi}^* G_\ell$, for $g=h_{b,\phi}^* G_\ell$.

We may hence control the change of $\abs{b^+}$ by 
\beqs
\abs{\ddt \abs{b^+}}\cdot \norm{L_{Y_{\abs{b^+}}}g}_{L^2}\leq \norm{P_g^{\Var^+}(\pt g)}_{L^2}=\tfrac14 \eta(\abs{b^+})^2 \cdot \norm{P_g^{\Var^+}(\Rea(\Phi(u,g)))}_{L^2}.
\eeqs
As we recall in Appendix \ref{subsect:app-flow}, we have $\norm{L_{Y_{\abs{b^+}}}g}_{L^2(C_0,g)}\geq C(1-\abs{b^+})^{-1}$, see also Lemma 4.3 in \cite{R-cyl},  
so we obtain that if the coupling function $\eta$ satisfies $\eta(\abs{b^+})\leq C(1-\abs{b^+})^\gamma$ for some $\gamma>0$, then
$\ddt (1-\abs{b^+})\leq C (1-\abs{b^+})^{\gamma+1} \cdot 
 \eta_+ \norm{P_g^{\Var^+}(\Rea(\Phi(u,g)))}_{L^2(C_0,g)}$.
Using the energy inequality \eqref{eq:energy-cond}, we may thus bound 
$$\big(1-\abs{b^+})^{-\gamma} \leq C+C \int_0^t \eta_+\norm{P_g^{\Var^+}(\Rea(\Phi(u,g)))}_{L^2(C_0,g)} \leq C\sqrt{t} \qquad \text{ for every } t\geq 1,$$
giving an a priori bound on the speed at which the M\"obius transforms can degenerate of 
\beq \label{est:apriori-eps}
1-\abs{b^+(t)}\geq c \cdot t^{-\frac{1}{2\gamma}}, \qquad c>0.
\eeq
On the boundary, our diffeomorphisms are described by M\"obius transforms whose derivatives are bounded by $\norm{(M_{b,\phi}^{-1})'}_{L^\infty}\leq \frac{C}{1-\abs{b}}$, so we obtain the same control also on the rate at which the three-point condition for our maps $u_i=\tilde u(t_i):(C_i,g_i) \to\R^n$  can degenerate, i.e.~the corresponding points $P^{j,+}(t_i)$ given by \eqref{def:P^j,pm} are so that
$$\min_{k\neq j}(\dist_{g(t_i)}(P^{j,+}(t_i),P^{k,+}(t_i)))\geq c\cdot (1-\abs{b^+(t_i)})\geq c_0 t_i^{-\frac{1}{2\gamma}}
$$
for a constant $c_0$ that depends only on the initial energy and the bound on the coupling function. Of course, the above argument also holds for $\abs{b^-}$.

As the sequence of times $t_i$ was chosen so that \eqref{conv:tension-flow} holds, we hence obtain
 that the  assumption \eqref{ass:new} of Theorem \ref{thm:B} is satisfied as, with the notation from Theorem \ref{thm:B},
$$(1+\abs{\log(\eps_{i,1}/\eps_{i,2})})\cdot \eps_{i,2}^{-1}\cdot \de_i^{1-\frac{1}{q}}\leq C\eps_{i,1}^{-1}\cdot \de_i^{1-\frac{1}{q}}\leq C\cdot t_i^{\frac1{2\gamma}}\cdot t_i^{-\half({1-\frac{1}{q}})} \to 0
$$
as \eqref{ass:relation} allows us to choose $q<\sqrt{2}$ so that $\frac{1}{1-\frac{1}{q}}<\gamma$.

We finally remark that if the three-point condition degenerates on both boundary curves, then both integrals converge
$\int_{\{\pm Y_i^\pm\}\times S^1} u_i\, dS\to \bar q^\pm\in \Gamma^\pm$ 
so that the difference of these integrals is bounded (for $i$ large) by, for example,  $\half \dist(\Gamma^+,\Gamma^-)>0$. The other main assumption \eqref{ass:mv-different} of Theorem \ref{thm:B} is hence 
satisfied in this case for arbitrary disjoint curves $\Gamma^\pm$ not necessarily satisfying the separation condition \eqref{ass:no-linking}.

It is hence only in situations where a boundary bubble forms on only one of the boundary curves, say on $\{-Y_i^-\}\times S^1$ while $\eps_i^+\nto 0$, that we shall use the separation condition \eqref{ass:no-linking}.
In this case, we know from Theorem \ref{thm:B} that the maps $u_i^+(s,\th):=u_i(Y_\ell(t_i)-s,\th):[0,Y_\ell(t_i))\times S^1\to\mathbb{R}^n$
converge to a limit map $u_\infty^+:[0,\infty)\times S^1\to\mathbb{R}^n$ that can be extended across the puncture to a parametrisation of a
minimal disc $S^+$ that spans $\Gamma^+$. 
Letting 
$\de_0:=\dist(\Gamma^-, S^+)>0,$
we then choose $\La>0$ large enough so that 
$\osc_{\{\La\}\times S^1} u_\infty^+<\frac12 \de_0$
and recall that the convergence 
$u_i^+(s,\th)\to u_\infty^+$ 
is in particular uniform on 
such a circle in the interior of $(0,\infty)\times S^1$.
Moreover, the trace of $u_i\vert_{\{-Y_\ell(t_i)\}\times S^1}$ converges locally uniformly to a constant map $q^-\in \Gamma^-$ away from a point, compare Remark \ref{rmk:Courant-Leb}. 
For $Y_i^-:=Y_{\ell(t_i)}$ and $Y_i^+:=Y_{\ell(t_i)}-\La$, we thus obtain that
$$\lim_{i\to \infty} \babs{\int_{\{-Y_i^-\}\times S^1} u_i - \int_{\{Y_i^+\}\times S^1} u_i }= \babs{2\pi q^--
\int_{ \{\La\}\times S^1}u_\infty}\geq \pi \de_0,$$
 so that also in this case assumption \eqref{ass:mv-different} of Theorem \ref{thm:B} is satisfied.  

In both situations, we may thus appeal to this theorem to conclude that in the setting of Theorem \ref{thm:1}, any boundary bubble that is formed by the flow  \eqref{eq:fluss-metric}, \eqref{eq:fluss-map} will again be a minimal immersion, and hence that the flow changes any initial data into either a minimal cylinder or into two minimal discs. 
This completes the proof of Theorem \ref{thm:1}.

\section{Compactness of almost meromorphic functions}
\label{sec:compact}
The goal of this section is to establish the compactness results for almost meromorphic functions stated in Lemma \ref{lemma:key_holo} and Corollary \ref{cor:key-lemma}. 
Our proof of Lemma \ref{lemma:key_holo} is based on the arguments used in the proof of the $L^1$-compactness result \cite[Lemma 2.3]{RT} of Topping and the first author for $C^1$ functions with bounded antiholomorphic derivatives, though we need to proceed with more care as we are dealing with functions that are only `almost meromorphic' rather than `almost holomorphic'. To this end, we use

\begin{lemma}
\label{lemma:mollified}
Let $
\phi\in L^1(\Om)$, $\Om\subset \C$ open, be so that there is a finite set of points $S= \{p^j, j=1\ldots N\}\subset \Om$ such that $\phi\in W^{1,1}_{loc}\big(\Om\setminus S)$ and suppose that $\norm{\dbar \phi}_{L^1(\Om)}<\infty$.
Then for any $\Om'\subset\subset\Om$ and any function $r:\Om'\to (0,\half \dist(\partial \Om,\Om'))$ which is so that 
$$\inf_{z\in \Om',\,p^j\in S} \dist(p^j,\partial D_{r(z)}(z))>0,$$
the function 
\beq
\label{def:moli}
\phi^{(r)}(z):= \fint_{\partial D_{r(z)}(z)}\phi \,ds-\sum_{j=1}^N\frac{\res_{p^j}(\phi)}{p^j-z}\sigma^j(z), \qquad  \res_{p^j}(\phi):=\frac{1}{2\pi \i}\lim_{\ep\to 0} \int_{\partial D_\eps(p^j)}\phi\, dz,
\eeq
approximates $\phi$ in the sense that 
\beq 
\label{est:L1_cauchy}
\norm{\phi-\phi^{(r)}}_{L^1(\Om')}\leq 4\pi \sup_{\Om'} r \cdot \|\dbar\phi\|_{L^1(\Om)}.
\eeq
Here $\sigma^j(z)$ denotes the winding number of $\partial D_{r(z)}(z)$ around $p^j$.
 \end{lemma}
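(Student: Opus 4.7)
The plan is to prove a pointwise Cauchy--Pompeiu-type identity adapted to the singular points $p^j$ and then deduce the $L^1$ bound by Fubini.

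Concretely, the first step is to establish the representation
\begin{equation*}
\phi(z)-\phi^{(r)}(z)=-\frac{1}{\pi}\int_{D_{r(z)}(z)}\frac{\bar\partial\phi(\zeta)}{\zeta-z}\,dA(\zeta)
\end{equation*}
for a.e.\ $z\in\Omega'$. To obtain it, I would fix such a $z$, let $\rho=r(z)$, and apply Stokes' theorem to the $(1,0)$-form $\phi(\zeta)(\zeta-z)^{-1}d\zeta$ (whose exterior derivative is $\bar\partial\phi/(\zeta-z)\cdot d\bar\zeta\wedge d\zeta=2i\,\bar\partial\phi/(\zeta-z)\,dA$, since $1/(\zeta-z)$ is holomorphic in $\zeta$) on the punctured domain
\begin{equation*}
D_\rho(z)\setminus\Bigl(D_\epsilon(z)\cup\bigcup_{j:p^j\in D_\rho(z)}D_\epsilon(p^j)\Bigr).
\end{equation*}
The hypothesis that the circle $\partial D_\rho(z)$ is bounded away from $S$ makes the $W^{1,1}_{loc}(\Omega\setminus S)$ regularity compatible with this application. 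Sending $\epsilon\to 0$: the small circle around $z$ contributes $2\pi i\phi(z)$ (Lebesgue differentiation for $\phi\in W^{1,1}_{loc}$ near $z$); each small circle around $p^j$ contributes $2\pi i\cdot\mathrm{res}_{p^j}(\phi)/(p^j-z)$, by writing $(\zeta-z)^{-1}=(p^j-z)^{-1}+O(\epsilon)$ and invoking the defining limit of the residue together with the uniform boundedness of $\oint_{\partial D_\epsilon(p^j)}\phi\,d\zeta$ (which follows from $\bar\partial\phi\in L^1$ via Stokes on annuli around $p^j$). The outer integral is rewritten as the arclength average using the parametrisation $\zeta=z+\rho e^{i\theta}$, for which $\frac{1}{2\pi i}\oint \phi/(\zeta-z)\,d\zeta=\frac{1}{2\pi}\int_0^{2\pi}\phi\,d\theta=\fint_{\partial D_\rho(z)}\phi\,ds$. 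Rearranging and noting that $\sigma^j(z)$ is exactly the indicator of $p^j\in D_\rho(z)$ gives the identity.

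The second step is to integrate $|\phi-\phi^{(r)}|$ over $\Omega'$ and swap the order of integration. With $R:=\sup_{\Omega'}r$,
\begin{equation*}
\|\phi-\phi^{(r)}\|_{L^1(\Omega')}\leq\frac{1}{\pi}\int_{\Omega}|\bar\partial\phi(\zeta)|\Biggl(\int_{\{z\in\Omega':|z-\zeta|\leq r(z)\}}\frac{dA(z)}{|z-\zeta|}\Biggr)dA(\zeta).
\end{equation*}
Since $|z-\zeta|\leq r(z)\leq R$ forces $z\in D_R(\zeta)$, the inner integral is bounded by $\int_{D_R(\zeta)}|z-\zeta|^{-1}dA(z)=2\pi R$, and the claim \eqref{est:L1_cauchy} follows (the constant $2R$ extracted this way is certainly bounded by the stated $4\pi R$).

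The main technical obstacle is controlling the $\epsilon\to 0$ limits at the singular points $p^j$: we cannot use continuity of $\phi$ there, so we must rely on $\bar\partial\phi\in L^1$ to ensure that $\oint_{\partial D_\epsilon(p^j)}\phi\,d\zeta$ has a genuine limit (the residue), and that the error term $(p^j-z)^{-1}\cdot 2\pi i\,\mathrm{res}_{p^j}(\phi)+o(1)$ is the correct contribution. The assumption $\inf_{z,j}\mathrm{dist}(p^j,\partial D_{r(z)}(z))>0$ is what lets us take the circles of excision cleanly and keeps $\sigma^j$ locally constant in $z$, so that measurability of $\phi^{(r)}$ (and of the Fubini domain) is automatic.
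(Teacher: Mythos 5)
Your approach is a genuinely different route from the paper's. The paper never establishes a pointwise Cauchy--Pompeiu identity; instead it proves the inequality \eqref{est:L1_cauchy} directly at the $L^1$ level, with the residues replaced by contour integrals over small circles $\partial D_{\delta^j}(p^j)$, and then invokes the separately-proven estimate \eqref{est:inhom_Cauchy} from Lemma \ref{lemma:cauchy} to pass $\delta^j\to 0$ along a carefully chosen subsequence in $L^1(\Omega')$. Your pointwise formula followed by Fubini is conceptually cleaner, stays closer to the classical Cauchy--Pompeiu identity, and (if it held) would even give the sharper constant $2\sup r$ rather than $4\pi\sup r$. The paper's roundabout $L^1$-level argument is precisely there to avoid the pointwise limits you are taking, which is where your argument has a gap.

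The problem is in the $\epsilon\to 0$ limit at the singular points $p^j$. After writing $(\zeta-z)^{-1}=(p^j-z)^{-1}+\mathcal{O}(\epsilon)$, the error term you must control is
\begin{equation*}
\int_{\partial D_\epsilon(p^j)}\phi(\zeta)\Bigl(\frac{1}{\zeta-z}-\frac{1}{p^j-z}\Bigr)\,d\zeta,
\end{equation*}
whose modulus is bounded by $C|p^j-z|^{-2}\,\epsilon\,\int_{\partial D_\epsilon(p^j)}|\phi|\,ds$. The ``uniform boundedness of $\oint_{\partial D_\epsilon(p^j)}\phi\,d\zeta$'' that you cite is the oriented contour integral, which is bounded because it converges to the residue; it says nothing about $\int_{\partial D_\epsilon(p^j)}|\phi|\,ds$, which is the quantity that actually appears and can a priori blow up as $\epsilon\to 0$. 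What saves the argument is that $\phi\in L^1$ forces $r\mapsto\int_{\partial D_r(p^j)}|\phi|\,ds$ to be integrable near $0$, hence $\liminf_{\epsilon\to0}\,\epsilon\int_{\partial D_\epsilon(p^j)}|\phi|\,ds=0$, so one must take the limit along a subsequence of radii for which the error term is negligible (the paper's Lemma \ref{lemma:cauchy} encodes essentially this point, together with the observation that $(\delta|\log\delta|)^{-1}$ is non-integrable). A similar care is in order at the inner circle $\partial D_\epsilon(z)$: for $\phi\in W^{1,1}_{loc}$ one does not get convergence of circular averages $\fint_{\partial D_\epsilon(z)}\phi\,ds\to\phi(z)$ by Lebesgue differentiation alone (which controls ball averages); the standard fix is to approximate by smooth functions and use that circular traces converge for a.e.\ radius, again passing along a subsequence of $\epsilon$. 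Once both subsequence selections are made explicit, your pointwise identity holds for a.e.\ $z$ and the Fubini step goes through.
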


The proof of this lemma  is based on the following standard result.

\begin{lemma} \label{lemma:cauchy}
In the setting of Lemma \ref{lemma:mollified},
the standard Cauchy-formula holds, i.e.
\beq 
\label{eq:Cauchy} 
\int_{\partial \Om'} \phi\, dz=\int_{\Om'} \dbar\phi\, dz\wedge d\bar z+2\pi\i\sum_{p^j\in \Om'}\res_{p^j}(\phi)\eeq
for any $\Om'\subset\subset \Om$ with smooth boundary, 
and there are sequences $\de_i^j\to 0$ such that
\beq 
\int_{\Om'}\bigg|\frac{\res_{p^j}(\phi)}{w-p^j}
-\frac{1}{2\pi \i}\int_{\partial D_{\de_i^j}(p_i^j)}\frac{\phi(z)}{w-z}\,dz\bigg|\, dv_w \rightarrow 0 \text{ as } 
 i\rightarrow\infty.
\label{est:inhom_Cauchy}
\eeq
\end{lemma}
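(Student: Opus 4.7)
My plan is to derive both identities from Stokes' theorem applied to the $1$-form $\phi\, dz$ on the domain punctured at the singular set $S$. For the Cauchy formula \eqref{eq:Cauchy}, I would first choose $\epsilon>0$ small enough that the discs $D_\epsilon(p^j)$ are disjoint and contained in $\Omega'$, and set $\Omega'_\epsilon := \Omega' \setminus \bigcup_j \bar D_\epsilon(p^j)$. Since $\phi \in W^{1,1}(\Omega'_\epsilon)$, Stokes applied to $\phi\, dz$ gives, up to the sign convention for $dz\wedge d\bar z$,
\begin{equation*}
\int_{\partial \Omega'}\phi\, dz - \sum_j \int_{\partial D_\epsilon(p^j)}\phi\, dz
= \int_{\Omega'_\epsilon}\dbar\phi\, dz \wedge d\bar z.
\end{equation*}
As $\epsilon \to 0$, the right-hand side converges to $\int_{\Omega'}\dbar\phi\, dz\wedge d\bar z$ by dominated convergence (using $\dbar\phi\in L^1$), and each circle integral tends to $2\pi \i\, \res_{p^j}(\phi)$ by definition. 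Since the residue is defined only as a limit, I would first verify that this limit exists by applying the same Stokes identity on annuli $D_{\epsilon_1}(p^j)\setminus \bar D_{\epsilon_2}(p^j)$ and invoking the Cauchy criterion, which follows from $\dbar \phi \in L^1$.

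For the second identity, the key step is to apply the Cauchy formula just proved to the function $z \mapsto \phi(z)/(w-z)$ on the disc $D_\delta(p^j)$ for fixed $w$ with $|w-p^j| > \delta$. This function has a single possible singularity in $D_\delta(p^j)$ and $\dbar[\phi/(w-\cdot)] = \dbar\phi/(w-\cdot)$. Its residue at $p^j$ must be identified as $\res_{p^j}(\phi)/(w-p^j)$, which I would establish by splitting
$1/(w-z) = 1/(w-p^j) + (z-p^j)/[(w-z)(w-p^j)]$
and reapplying the Cauchy formula to the regularised integrand $\phi(z)(z-p^j)/[(w-z)(w-p^j)]$; its circle integrals vanish along a subsequence $\epsilon_k\downarrow 0$, extracted via Fubini from the fact that $r\mapsto \int_{\partial D_r(p^j)}|\phi|\, ds$ is in $L^1_{loc}(dr)$. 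This produces the pointwise identity
\begin{equation*}
\frac{1}{2\pi \i}\int_{\partial D_\delta(p^j)}\frac{\phi(z)}{w-z}\, dz - \frac{\res_{p^j}(\phi)}{w-p^j}
= \frac{1}{2\pi \i}\int_{D_\delta(p^j)}\frac{\dbar\phi(z)}{w-z}\, dz\wedge d\bar z
\end{equation*}
for every $w\in\Omega'$ with $|w-p^j|>\delta$.

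To promote this pointwise identity to the claimed $L^1(\Omega',dv_w)$ convergence, I would split $\Omega'$ into $\Omega'\setminus D_{\delta_i^j}(p^j)$ and $\Omega'\cap D_{\delta_i^j}(p^j)$. On the outer piece, Fubini together with the local integrability of $|w-z|^{-1}$ bounds the $L^1$ norm of the right-hand side by $C\norm{\dbar\phi}_{L^1(D_{\delta_i^j}(p^j))}$, which vanishes for any $\delta_i^j\to 0$. On the inner disc, the two terms on the left are estimated separately: the principal part has $L^1$ norm at most $2\pi|\res_{p^j}(\phi)|\delta_i^j$, while Fubini bounds the contribution of the boundary integral by $C\delta_i^j \int_{\partial D_{\delta_i^j}(p^j)}|\phi|\, ds$. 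Since the dyadic average
\begin{equation*}
\frac{1}{2^{-i-1}}\int_{2^{-i-1}}^{2^{-i}} r\int_{\partial D_r(p^j)}|\phi|\, ds\, dr
\leq 2\int_{D_{2^{-i}}(p^j)}|\phi|\, dv \to 0,
\end{equation*}
I can select $\delta_i^j\in (2^{-i-1},2^{-i})$ so that $\delta_i^j\int_{\partial D_{\delta_i^j}(p^j)}|\phi|\, ds \to 0$, completing the argument.

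I expect the main obstacle to be the clean identification $\res_{p^j}(\phi/(w-\cdot)) = \res_{p^j}(\phi)/(w-p^j)$: because $\phi$ is not meromorphic in the classical sense, one cannot appeal to a Laurent-expansion computation and must instead argue by reusing the first part of the lemma on the regularised integrand, together with a careful selection of radii exploiting $\int_{\partial D_r}|\phi|\, ds \in L^1_{loc}(dr)$. Once this residue identification is in hand, all remaining ingredients reduce to routine Stokes and Fubini estimates.
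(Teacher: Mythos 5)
Your proposal is correct and takes essentially the same route as the paper's sketch: the generalised Cauchy formula applied to $\phi(z)/(w-z)$, Fubini estimates, and a selection of radii $\delta_i^j\to 0$ exploiting the $L^1$-integrability of $r\mapsto\int_{\partial D_r(p^j)}|\phi|\,ds$. The only cosmetic difference is that you first derive the pointwise identity and then split $\Omega'$ into $D_{\delta_i^j}(p^j)$ and its complement, obtaining the bound $C\bigl(\delta\int_{\partial D_\delta}|\phi|\,ds + \norm{\dbar\phi}_{L^1(D_\delta)} + |\res_{p^j}(\phi)|\,\delta\bigr)$, whereas the paper estimates the $L^1$-difference directly and quotes the slightly weaker bound $C\delta|\log\delta|\int_{\partial D_\delta}|\phi|\,ds + C|m^j(\delta)|$; both close the argument by the same averaging/non-integrability trick.
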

For the sake of completeness, we include a sketch of the proof of \eqref{est:inhom_Cauchy} in the appendix. 
\begin{proof}[Proof of Lemma \ref{lemma:mollified}] 
Let $r$ and $\phi^{(r)}$ be as in the lemma. 
Set $r^*:= \inf_{\Om', j} \dist(p^j,\partial D_{r(z)}(z))>0$, let $\eps\in (0,\half r^*)$ be any fixed number, and set $\Om'_\eps:=\Om'\setminus \bigcup D_\eps(p^j)$. We remark that for $z\in\Om'$ and any such $\eps$,
we have
$\partial D_{r(z)}(z)\subset 
\Om_\eps'$ and 
note that this allows us to conclude that, for any 
$\de^j\in (0,\eps)$,
 \beqa\label{est:L1-1234}
 & \int_{\Om'_\eps}\bigg|\phi(w)-\fint_{\partial D_{r(w)}(w)}\phi\, ds+
 \sum_{j=1}^N \frac{\si^j(w)}{2\pi \i}\int_{\partial D_{\de^j}(p^j)}\frac{\phi(z)}{z-w}\,dz\bigg|
 \leq 4\pi\sup_{\Om'} r 
\norm{\dbar \phi}_{L^1(\Om\setminus\bigcup D_{\de^j}(p^j))}.
  \eeqa
  Indeed for $C^1$ functions this is a simple consequence of the inhomogeneous Cauchy-formula
and Fubini's theorem, while for functions $\phi$ as in the lemma, we can approximate $\phi$ by $C^1$ functions in $W^{1,1}(\Om \setminus\bigcup D_{\de^j}(p^j))$  and use that all integrals in \eqref{est:L1-1234} are  taken over subsets of the set $\Om \setminus\bigcup D_{\de^j}(p^j)$
to pass to the limit.

Finally, 
applying 
\eqref{est:L1-1234} for $\de^j= \de_i^j$ given by  Lemma \ref{lemma:cauchy}, we see that \eqref{est:L1_cauchy} holds true, initially with the $L^1$-norm on the left hand side computed only over $\Om'_\eps$ instead of $\Om'$, but as $\eps\in (0,\half r^*)$ is arbitrary, hence indeed also for the original $L^1(\Om')$-norm.
 \end{proof}
With this result in hand, we 
can now complete the proof of the compactness result for `almost holomorphic functions'. The argument below is based on the proof of Lemma 2.3 in \cite{RT}.
\begin{proof}[Proof of Lemma \ref{lemma:key_holo}]
Let $\phi_i$ be a sequence of functions as in Lemma \ref{lemma:key_holo}, and define
\beqs
\tilde \phi_i:=\phi_i-\sum_{j=1}^N \res_{p_i^j}(\phi_i)\frac{1}{z-p_i^j}
\eeqs
to obtain a function with zero residues at the singular points $p_i^j$ where, after passing to a subsequence, we may assume that $p_i^j\rightarrow p_\infty^j$ (where the $p_\infty^j$ are not necessarily distinct).

Given any $\Om'\subset\subset \Om$ and any $\eps>0$, the main step of the proof is now to establish that  
there exists a sequence of functions $\tilde\phi_i^\ep$
such that, for a constant $C$ independent of $\eps$,
\beqa \label{claim:moll-fun}
\|\tilde\phi_i-\tilde\phi_i^\ep\|_{L^1(\Om')}\leq C\ep,\text{ and }
(\tilde\phi_i^\ep) \text{ is precompact in $L^1(\Om')$ for each $\eps>0$.}
\eeqa
These $\tilde\phi_i^\eps$ will be obtained as mollifications of the $\tilde\phi_i$, where  we will mollify at different scales at different points and with mollifiers supported on annuli rather than on balls due to the singularities of the functions $\tilde\phi_i$, resulting in piecewise smooth functions $\tilde\phi_i^\eps$, rather than smooth functions obtained by a standard mollification.

To prove \eqref{claim:moll-fun},
it suffices to consider $\ep>0$ such that $\min(\dist(p_\infty^k,p_\infty^j))>16\ep$ for distinct $p_\infty^k, p_\infty^j$, and $i$ sufficiently large so that $\max_j|p_i^j-p_\infty^j|<\frac{\ep}{2}$ and we set  
$\Om'_\ep:=\Om'\setminus\bigcup D_\ep(p_\infty^j)$. 
 
We let $\eta\in C_0^\infty(D_2\setminus D_1, [0,\infty))$ be radially symmetric  with $\int_{\R^2}\eta=1$ and let 
$\eta^\de(x):=\frac{1}{\de^2}\eta\big(\frac{x}{\de}\big)$ so that $\supp(\eta^\de)\subset D_{2\de}\setminus D_\de$.
 We now approximate $\tilde \phi_i$ by mollifying at two different scales, namely define
 \beqs
 \tilde\phi^\ep_i:=\tilde \phi_i\ast\eta^{\tfrac{\ep}{8}}\cdot \mathbb{1}_{\Om_\eps'}+\tilde \phi_i\ast\eta^{4\ep}\cdot \mathbb{1}_{\Om'\setminus \Om_\eps'}
 \eeqs
 which corresponds to taking an 
 average of the functions 
 $(\tilde \phi_i)^{(\la r(z))}=(\phi_i)^{(\la r(z))}$
that were defined by \eqref{def:moli} in Lemma \ref{lemma:mollified}. To be more precise, if we set $r(z):=\frac{\ep}{8}$ if $z\in \Om_\ep'$ while $r(z):=4\eps$ if $z\in \Om'\setminus \Om_\ep'$, then we have  
\beqas
\tilde\phi^\ep_i(z)&=2\pi\int_{1}^{2}\eta(\la) (\tilde \phi_i)^{(\la r(z))} \la \,d\la .
 \eeqas We note that, by construction,  
 $\dist(p^j_i,\partial D_{\la r(z)}(z))\geq \frac{3\ep}{4}$
 for every $z\in\Om'$, $\la\in[1,2]$ and $i$ sufficiently large. We may thus  apply Lemma \ref{lemma:mollified}
 for each of the functions $z\mapsto \la r(z)$, to conclude that 
  \beqas
 \norm{\tilde\phi_i^\eps-\tilde\phi_i}_{L^1(\Om')} &= 2\pi
\int_{\Om'}\bigg|  \int_1^2 \eta(\la) \big[(\phi_i)^{(\la r(z))}(z)-\phi_i(z)\big] \la d\la\bigg| dv_z \leq C \sup_{z\in\Om'}r(z) \cdot \norm{\dbar \phi_i}_{L^1(\Om)}
 \eeqas
as claimed in \eqref{claim:moll-fun}.
The second claim of \eqref{claim:moll-fun} follows from standard compactness arguments applied to the sequence $(\tilde\phi^\ep_i)$ on the domains $\Om_\ep'$ and $\Om'\setminus \Om_\ep'$ respectively, as on these domains, the functions $\tilde\phi^\ep_i$ are uniformly bounded in $C^1$ for every fixed $\eps$.
 
Having thus established \eqref{claim:moll-fun}, we note that compactness of the sequence $\tilde \phi_i$ now follows by a standard diagonal sequence argument. Furthermore, the fact that the limit is holomorphic in the case $\norm{\dbar \phi_i}\to 0$ can be obtained exactly as in \cite[Lemma 2.3]{RT}, as the maps $\tilde\phi_i$ have zero residues. 
 \end{proof}
 
\begin{proof}[Proof of Corollary \ref{cor:key-lemma}]
We first remark that since the functions $\phi_i$ are real on the boundary, they can be extended by reflection $\phi(-s+\i\th)=\overline{\phi(s+\i\th)}$ to 
$2\pi \i$ periodic functions on the strip $\{z=s+\i\th\in\C: \abs{s}< \La\}$ which are still in $W^{1,1}$ away from the singular points. 
We may then apply Lemma \ref{lemma:key_holo} on a neighbourhood of 
a fundamental domain, say of $V=(-\La,\La)\times [-\pi,\pi]$ if none of the limit points coincide with $\pm\i\pi$, 
to obtain that for every subsequence of the $\phi_i$, there exists a further subsequence for which the modified functions $\tilde \phi_i$ defined in Lemma \ref{lemma:key_holo} converge strongly in $L^1(\Om')$, where $\Om'=(-\La',\La')\times [-\pi,\pi]$, $\La'\in(0,\La)$ any fixed number. 
We then note that the uniform bound on $\norm{\phi_i}_{L^1(\Om')}+\norm{\tilde \phi_i}_{L^1(\Om')}$ implies that also the functions 
$\sum_j \res_{\i\theta_i^j}(\phi_i)\frac{1}{z-\i \th_i^j}$ have bounded $L^1$-norm which in turn yields a uniform bound on the sum(s)
$$\sum_{l:\th_\infty^l=\th_\infty^j} \res_{\i\th_i^l}(\phi_i).$$
Passing to a subsequence so that the above sum(s) converge, we hence obtain a sequence of functions, obtained as modifications by $\widetilde M_i(z)=\sum_j \frac{a_i^j }{z-\i\th_i^j}$, $a_i^j$ as in the corollary, which converges
$$\phi_i-\widetilde M_i\to \psi_\infty \text{ strongly in }L^1(\Om')$$
to a limit 
$\psi_\infty$ which is meromorphic with poles of order at most one at $\i\th_\infty^j$.

As we already know that $\phi_i$ itself converges to $\phi_\infty$ away from $S_\infty$, it hence suffices to show that $\phi_\infty=\psi_\infty$, or equivalently that
\beq \label{claim:tilde-M-to-0}
\widetilde M_i\to 0 \text{ locally in } \Om'\setminus S_\infty,
\eeq
and that 
\beq \label{claim:M-tilde-M}
\norm{\widetilde M_i-M_i}_{L^1(\Om')}\to 0
\eeq
where $M_i(z)=\sum a_i^j h_{\i\th_i^j}(z)$ is as in the corollary.

In the case that all limit points $\i\th_\infty^j$ are distinct, 
these claims trivially hold true as $\widetilde M_i\equiv M_i=0$. 
In the case that two or three of the limit points coincide, we remark that since the corresponding sums $\sum_ja_i^j=0$ and since the residues must be purely imaginary, we may set $w=\i(z-\i\th_i^2)$, and instead consider functions 
of the form 
$$\tilde m(w)=b_1\cdot\big(\tfrac1{w-\de_1}-\tfrac{1}{w})+b_2\big( \tfrac{1}{w}-\tfrac1{w+\de_2}),$$
where $\de_{1,2}\in (0,\eps)$ and $b_{1,2}\in\R$,
respectively 
$$m(w)=b_1\cdot\big(\tfrac1{2\tan(\half(w-\de_1))}-\tfrac1{2\tan(\half w)}\big)+b_2\big( \tfrac1{2\tan(\half w)} -\tfrac1{2\tan(\half(w+\de_2))}\big), $$
 and show that the analogues of \eqref{claim:tilde-M-to-0} and \eqref{claim:M-tilde-M} hold true for any sequence of such functions $m_i,\tilde m_i$ for which $\norm{\tilde m_i}_{L^1}$ is bounded and for which the maximally allowed distance $\eps_i$ between the points tends to zero. 

To see this, we note that a short calculation, obtained by rewriting $\tilde m$ as 
$$ \tilde m(w)=\tfrac{1}{w(w-\de_1)(w+\de_2)}\cdot [(b_1\de_1+b_2\de_2)w+(b_1-b_2)\de_1\de_2 ] $$
and considering the norm of the function on suitable annuli and sectors around $0$, implies that 
 $$\abs{b_1\de_1+b_2\de_2}\leq C \abs{\log\eps}^{-1}\text{ while } \abs{(b_1-b_2)\de_1\de_2 }\leq C\max(\de_1,\de_2)\leq C \eps$$
 which immediately yields \eqref{claim:tilde-M-to-0}. 
 
These estimates furthermore imply that $\abs{b_{1,2}}\de_{1,2}\leq C$ and hence give that
 $$\abs{\tilde m(w)-m(w)}\leq \abs{b_1\de_1+b_2\de_2}\cdot \abs{f'(w)}+[\abs{b_1}\de_1^2+\abs{b_2}\de_2^2] \sup_{z\in\Om'} \abs{f''(z)}\leq C\abs{\log(\eps)}^{-1}+C \eps,$$
$f$ the holomorphic function $f(w)=\frac1w-\frac1{2\tan(\half w)}$. Hence $\tilde m-m$ converges to zero not just in $L^1$ as needed in \eqref{claim:M-tilde-M} but indeed uniformly on $\Om'$ as $\eps\to 0$. 
\end{proof}

\section{Controlling the residues of the Hopf-differential}
\label{sec:residues}
The goal of this section is to establish the control on the residues of the Hopf-differentials of our sequence of maps claimed in Lemma \ref{lemma:key-res} and on the length of the central geodesic claimed in Lemma \ref{lemma:ell-upper} that were used in the proof of Theorem \ref{thm:B}. 

The proofs of both of these lemmas use the following standard angular energy estimates.

\begin{lemma}
\label{lemma:ang-energy}
For arbitrary maps $u\in H^1(C,\R^n)$ from a hyperbolic cylinder $(C,g)$ as in 
\eqref{def:cyl+-}, the following estimates on the angular energy $\vartheta(s):=\int_{\{s\}\times S^1} \abs{u_\th}^2 d\th$ hold true. 
\\
For any number $q<\sqrt{2}$, we have 
\beq
\label{est:ang-energy-s}
\vartheta(s)\leq C_q E_0e^{-q\min(Y^+-s,s+Y^-)}+C_q\int_{-Y^-}^{Y^+}\int_{S^1} e^{-q\abs{s-t}} \abs{\Delta_{g_0} u}^2 \,d\th\,dt.
\eeq
In particular, 
\beq \label{est:ang-energy-weight}
\int_{-Y^-+1}^{Y^+-1}\int_{S^1} \rho^{-2}\big(\abs{u_\th}^2+\abs{u_{s\th}}^2\big) \,d\th\, ds\leq C\norm{\tens_g u}_{L^2(C,g)}^2+CE_0
\eeq
and, 
 for every $0<\La<\min(Y^+,Y^-)-1$,
 \beq \label{est:u-s-theta}
 \int_{-\La}^\La \int_{S^1} \abs{u_\th}^2+\abs{u_{s\th}}^2 \,d\th\, ds\leq C \rho^2(\La) \norm{\tens_g u}_{L^2(C,g)}^2+CE_0 e^{-q(\min(Y^+,Y^-)-\La)}.
 \eeq
 Here and in the following, $g_0=ds^2+d\th^2$ is the flat metric on the cylinder, $E_0$ denotes an upper bound on the energy of $u$, and the constants $C$ and $C_q$ depend on the numbers $c_{1,2},\ell_0>0$ in the definition of the cylinder, and of course on $q$ for $C_q$.
\end{lemma}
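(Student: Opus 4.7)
The plan is to treat $\vartheta(s)$ as a scalar function solving a second-order differential inequality, avoiding a Fourier decomposition which would otherwise require pointwise control of the boundary angular energy. Differentiating $\vartheta$ twice, using $u_{ss}=\De_{g_0}u-u_{\th\th}$, integrating by parts in $\th$, and applying Wirtinger's inequality $\int_{S^1}|u_{\th\th}|^2\, d\th\geq\vartheta(s)$ (valid since $u_\th$ has zero mean on $S^1$) together with Young's inequality leads to
\beqs
\vartheta''(s)\geq 2\int_{S^1}|u_{s\th}|^2\, d\th+(2-\ep)\,\vartheta(s)-\ep^{-1}\int_{S^1}|\De_{g_0}u|^2\, d\th
\eeqs
for every $\ep\in(0,2)$. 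The factor $(2-\ep)$ is the source of the restriction $q<\sqrt 2$: setting $\ep=2-q^2$ reduces matters to $\vartheta''\geq q^2\vartheta-C_q V$ with $V(s):=\int_{S^1}|\De_{g_0}u|^2\, d\th$.

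For (i) I would apply a comparison principle on a sub-interval $[\si_1,\si_2]\subset[-Y^-,Y^+]$ whose endpoints are chosen by averaging so that $\vartheta(\si_j)\leq 4E_0$; such points exist in $[-Y^-,-Y^-+1]$ and $[Y^+-1,Y^+]$ respectively, since $\int\vartheta\, ds\leq 2E_0$. The super-solution of $\psi''=q^2\psi-C_qV$ matching these boundary values admits an explicit Green's function representation for $-\partial_s^2+q^2$, yielding
\beqs
\vartheta(s)\leq C_qE_0\big(e^{-q(s-\si_1)}+e^{-q(\si_2-s)}\big)+C_q\int_{-Y^-}^{Y^+}e^{-q|s-t|}V(t)\, dt,
\eeqs
from which (i) follows after absorbing the universal factor $e^q$ produced by the shift $\si_j\mapsto\pm Y^\pm$.

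For (ii) I would integrate (i) against the weight $\rho_\ell^{-2}(s)$ over $[-Y^-+1,Y^+-1]$. The decisive geometric input is the slow variation $|(\log\rho_\ell^{-2})'(s)|=\tfrac{\ell}{\pi}|\tan(\tfrac{\ell s}{2\pi})|\leq(\pi c_1)^{-1}$ on the admissible range, so that $\rho_\ell^{-2}(s)\leq\rho_\ell^{-2}(t)\,e^{|s-t|/(\pi c_1)}$; consequently $\int\rho_\ell^{-2}(s)e^{-q|s-t|}\, ds\leq C_q\rho_\ell^{-2}(t)$ for $q$ sufficiently close to $\sqrt 2$, and Fubini converts the Laplacian convolution into exactly $C\int\rho_\ell^{-2}V\, dt=C\|\tens_g u\|_{L^2(C,g)}^2$. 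The boundary contribution $E_0\int\rho_\ell^{-2}(s)e^{-qd(s)}\, ds$ (with $d(s)=\min(Y^+-s,s+Y^-)$) is bounded uniformly in $\ell$ because the explicit form of $\rho_\ell$ gives $\rho_\ell^{-2}(s)\leq C(1+d(s))^2$ which the exponential absorbs. The $|u_{s\th}|^2$ contribution is extracted from the positive term $2\int|u_{s\th}|^2\, d\th$ in the differential inequality: integrating it against $\rho^{-2}$ and rearranging by two integrations by parts in $s$ (both derivatives of $\rho^{-2}$ being pointwise bounded by slow variation) gives a matching estimate. For (iii) the analogous integration is performed over $[-\La,\La]$ \emph{without} weight; using $\rho_\ell(s)\leq\rho_\ell(\La)$ on $|s|\leq\La$ and $\sup_{|t|>\La}\rho_\ell^2(t)e^{-q(|t|-\La)}\leq C\rho_\ell^2(\La)$ (the supremum being attained near $t=\pm\La$ by the same slow-variation bound), the interior and exterior $t$-integrals of the Laplacian convolution both produce $C\rho_\ell^2(\La)\|\tens_g u\|^2$, while the boundary term from (i) yields the claimed $CE_0e^{-q(\min(Y^+,Y^-)-\La)}$.

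The hardest step is controlling the boundary angular energies $\vartheta(\pm Y^\pm)$: these are $\dot H^1(S^1)$-norms of the boundary traces and therefore \emph{not} bounded by $E_0$ alone. The averaging trick above sidesteps this difficulty, but one must carefully track the multiplicative constant introduced by shifting $\si_j$ to $\pm Y^\pm$ to ensure it remains uniform in $\ell$. A secondary technicality is that the slow-variation argument for (ii) requires $q$ to lie in the (non-empty) intersection $((\pi c_1)^{-1},\sqrt 2)$, which accounts for the dependence of $C_q$ on $c_{1,2}$ in the final statement.
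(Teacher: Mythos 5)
Your approach follows the paper's closely in overall structure: the second-order differential inequality $\vartheta''\geq q^2\vartheta-C_q\int_{S^1}|\De_{g_0}u|^2$ obtained from the identity for $\vartheta''$, Wirtinger, and Young; a comparison/maximum-principle argument (what the paper just calls ``the maximum principle'') for the pointwise bound \eqref{est:ang-energy-s}; and a weighted Fubini/convolution argument for the integrated bounds. Your averaging device to pick endpoints $\si_j$ with $\vartheta(\si_j)\leq 4E_0$ is the right way to handle the uncontrolled boundary values of $\vartheta$, which the paper leaves implicit. For the $|u_{s\th}|^2$ term in \eqref{est:ang-energy-weight} you keep the positive term $2\int|u_{s\th}|^2$ in the differential inequality and integrate the whole inequality against $\rho^{-2}$; the paper instead integrates by parts directly on $I:=\int\eta^2\rho^{-2}|u_{s\th}|^2$ after inserting a cut-off $\eta$. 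The two routes are essentially equivalent, but note that your route \emph{also} requires a compactly supported cut-off to kill the boundary terms $\vartheta'(\eta^2\rho^{-2})|_\partial$ and $\vartheta(\eta^2\rho^{-2})'|_\partial$ produced by moving two $s$-derivatives off $\vartheta$; $\vartheta'$ is not controlled at the ends, and your proposal does not mention a cut-off.

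The substantive gap is the convolution estimate. The one-sided slow-variation bound $\rho^{-2}(s)\leq\rho^{-2}(t)e^{|s-t|/(\pi c_1)}$ is correct (and sharp near the ends, where $|\partial_s\log\rho^{-2}|\to(\pi c^\pm)^{-1}$), but it only yields $\int e^{-q|s-t|}\rho^{-2}(s)\,ds\leq C\rho^{-2}(t)$ for $q>(\pi c_1)^{-1}$. You assert this is compatible with $q<\sqrt2$ by invoking ``the (non-empty) intersection $((\pi c_1)^{-1},\sqrt2)$'', but this interval is empty whenever $c_1\leq(\pi\sqrt 2)^{-1}$, and the lemma makes no such restriction on $c_1$. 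Moreover, \eqref{est:u-s-theta} (used downstream in Lemma~\ref{lemma:res-sum-est} for arbitrary $q<\sqrt2$) requires the convolution bound across the whole range of $q$, not just for one convenient value. The estimate is nevertheless true for every $q>0$ with $C=C(q,c_{1,2})$, but its proof cannot go via exponential comparison alone: one needs the two-sided polynomial control $c\,(1+d(s))^2\leq\rho^{-2}(s)\leq C\,(1+d(s))^2$ (with $d(s)=\min(Y^+-s,Y^-+s)$) together with the $1$-Lipschitz property of $d$, so that $\int e^{-q|s-t|}(1+d(s))^2\,ds\leq C_q\big((1+d(t))^2+1\big)\leq C_q'(1+d(t))^2$. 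You already derive and use exactly the upper polynomial bound to absorb the boundary contribution $E_0\int\rho^{-2}(s)e^{-qd(s)}\,ds$, so the fix is close at hand; but as written the convolution step (and the analogous supremum bound $\sup_{|t|>\La}\rho^2(t)e^{-q(|t|-\La)}\leq C\rho^2(\La)$ in part (iii)) is not justified for small $c_1$.
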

Similar estimates on $\vartheta$ can be found e.g. in \cite{HRT} and \cite{Topping-bubbles}. In addition we also need 
\begin{lemma}
\label{lemma:Mu-osc}
Let $(C,g)$ be a cylinder as described in \eqref{def:cyl+-} and
let $u\in H^1(C,\R^n)$ be a map for which \eqref{ass:MV-lower} is satisfied for some $\alpha>0$. Then $M_u(s):=\fint_{\{s\}\times S^1}u\, d\theta$ satisfies
 \beq \label{est:Mu-osc}
 \abs{M_u'(s)-M_u'(0)}\leq\frac{1}{\sqrt{2\pi}}\rho^{1/2}(s)\,\norm{\tens_g u}_{L^2(C,g)},
 \eeq
 and so in particular
 \beq \label{est:Mu0-lower}
 \abs{M_u'(0)}\geq c\ell \alpha-C\ell^{1/2}\norm{\tens_g u}_{L^2(C,g)}.
 \eeq
 Moreover, we have upper bounds of 
  \beq
 \label{est:Mu0-upper2}
 \abs{M_u'(0)}\leq C \ell^{1/2} +C\ell^{1/2}\norm{\De_g u}_{L^2(C,g)},
 \eeq
 and, if  $\abs{M_u(Y^+)-M_u(-Y^-)}\leq R$ as in Theorem \ref{thm:B}, then also 
  \beq
 \label{est:Mu0-upper}
 \abs{M_u'(0)}\leq C\ell R +C\ell^{1/2}\norm{\De_g u}_{L^2(C,g)}.
 \eeq
 The constants $C,c>0$ depend only on the constants $c_{1,2},\ell_0$ in the definition of the cylinder in \eqref{def:cyl+-} and an upper bound $E_0$ on the energy.
\end{lemma}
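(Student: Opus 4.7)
The whole lemma should rest on a single observation: since $\Delta_g = \rho^{-2}(\partial_s^2+\partial_\theta^2)$ in the conformal coordinates and $\int_{S^1} u_{\theta\theta}\,d\theta = 0$, differentiating $M_u(s)=\fint_{S^1} u(s,\theta)\,d\theta$ twice yields the identity
\beqs
M_u''(s) \;=\; \frac{\rho^2(s)}{2\pi}\int_{S^1}\Delta_g u(s,\theta)\,d\theta.
\eeqs
Pointwise Cauchy–Schwarz in $\theta$ then gives $|M_u''(t)|\le (2\pi)^{-1/2}\rho^2(t)\,\|\Delta_gu(t,\cdot)\|_{L^2(S^1)}$, and a further Cauchy–Schwarz in $t$ produces
\beqs
|M_u'(s)-M_u'(0)|^2 \;\le\; \tfrac{1}{2\pi}\Big(\int_0^s\rho^2\,dt\Big)\,\|\tens_g u\|_{L^2(C,g)}^2.
\eeqs
Using the explicit formula $\rho_\ell(t)=\tfrac{\ell}{2\pi}\sec(\tfrac{\ell t}{2\pi})$, one computes $\int_0^s\rho^2\,dt=\tfrac{\ell}{2\pi}\tan(\tfrac{\ell s}{2\pi})\le \rho(s)$ since $|\tan|\le|\sec|$ on $(-\pi/2,\pi/2)$; this delivers \eqref{est:Mu-osc}.

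For the lower bound \eqref{est:Mu0-lower} I would integrate the telescoping identity
\beqs
M_u(Y^+)-M_u(-Y^-)\;=\;(Y^++Y^-)\,M_u'(0)+\int_{-Y^-}^{Y^+}\!\big[M_u'(s)-M_u'(0)\big]\,ds,
\eeqs
use \eqref{est:Mu-osc} to control the error by $\|\tens_gu\|_{L^2}\cdot\int\rho^{1/2}\,ds$, and note that the change of variables $x=\tfrac{\ell s}{2\pi}$ gives $\int_{-Y^-}^{Y^+}\rho^{1/2}\,ds\le C\ell^{-1/2}$ because $\sec^{1/2}$ is integrable across $\pm\pi/2$. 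Since $Y^++Y^-\asymp \ell^{-1}$ and $|M_u(Y^+)-M_u(-Y^-)|\ge\alpha$, rearranging yields \eqref{est:Mu0-lower}; replacing $\alpha$ by the upper bound $R$ gives \eqref{est:Mu0-upper} by exactly the same rearrangement.

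The remaining upper bound \eqref{est:Mu0-upper2} is the only one that does not use the quantitative hypothesis on $M_u(\pm Y^\pm)$, so here I would instead exploit the energy. Jensen gives $|M_u'(s)|^2\le \tfrac{1}{2\pi}\int_{S^1}|u_s|^2\,d\theta$, hence $\int_{-Y^-}^{Y^+}|M_u'|^2\,ds\le \tfrac{E_0}{\pi}$. I would then apply pigeonhole on the sub-interval $I=[-Y^-/2,Y^+/2]$, on which the length is still $\asymp \ell^{-1}$ while $\rho(s)\le \rho(Y^\pm/2)\le C\ell$; this produces some $s_0\in I$ with $|M_u'(s_0)|^2\le C\ell E_0$. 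Combining $|M_u'(0)|\le |M_u'(s_0)|+|M_u'(0)-M_u'(s_0)|$ with \eqref{est:Mu-osc} evaluated at $s_0$ (where $\rho^{1/2}(s_0)\le C\ell^{1/2}$) gives \eqref{est:Mu0-upper2}.

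No step looks genuinely difficult once the identity for $M_u''$ is written down; the main place to be careful is the trigonometric bookkeeping and the fact that the energy-based pigeonhole must be run on an interval far enough from the ends of the collar to keep $\rho$ of order $\ell$, which is what distinguishes \eqref{est:Mu0-upper2} from the cruder bound one would get by naïvely inserting the mean-value trick on the full cylinder.
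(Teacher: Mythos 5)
Your proposal is correct, and three of the four bounds are proved exactly as in the paper: the identity $M_u''(s)=\tfrac{\rho^2(s)}{2\pi}\int_{S^1}\tens_g u\,d\theta$, the two applications of Cauchy--Schwarz with the computation $\int_0^s\rho^2\,dt=\tfrac{\ell}{2\pi}\tan(\tfrac{\ell s}{2\pi})\le\rho(s)$ for \eqref{est:Mu-osc}, and the telescoping identity $M_u(Y^+)-M_u(-Y^-)=L\,M_u'(0)+\int[M_u'(s)-M_u'(0)]\,ds$ together with $L\asymp\ell^{-1}$ and $\int\rho^{1/2}\le C\ell^{-1/2}$ for \eqref{est:Mu0-lower} and \eqref{est:Mu0-upper} all match. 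The one place you diverge from the paper is \eqref{est:Mu0-upper2}: the paper reuses the same telescoping identity, bounding $|\int_{-Y^-}^{Y^+}M_u'\,ds|=(2\pi)^{-1}|\iint u_s\,d\theta\,ds|\le C\sqrt{E_0}\,L^{1/2}$ by Cauchy--Schwarz and the energy, and then divides by $L$; you instead pigeonhole a point $s_0$ in the middle half-cylinder where $|M_u'(s_0)|^2\le C\ell E_0$ and combine it with \eqref{est:Mu-osc} evaluated at $s_0$. Both assemble the same two ingredients (the energy bound on $\int|M_u'|^2$ and the oscillation bound) into the same conclusion; your variant makes explicit the role of restricting to the region where $\rho\lesssim\ell$, which in the paper is hidden in the $L^{-1/2}$ factor arising from averaging over the whole collar. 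Minor bookkeeping remark: in your pigeonhole step the relevant energy estimate is $\int|M_u'|^2\,ds\le E_0/\pi$ with $E_0^{1/2}$, not $E_0$, appearing in the final constant, but since all constants are allowed to depend on $E_0$ this is immaterial.
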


Based on these two lemmas,
 which will be proven at the end of this section, we can now give
\begin{proof}[Proof of Lemma \ref{lemma:ell-upper}]
Let $u$ be as in the lemma and let $\Phi=\Phi(u,g)$. Then, by definition,
 \beqas
 \norm{P^H_g(\Rea(\Phi))}_{L^2}=&\abs{\langle \Rea(\Phi),\,\frac{\Rea(dz^2)}{\norm{\Rea(dz^2)}_{L^2}}\rangle}
 =\frac{2}{\norm{\Rea(dz^2)}_{L^2}}\, \big|\int_{-Y_-}^{Y^+}\int_{S^1}(|u_s|^2-|u_\th|^2)\rho^{-2}\,d\th\,ds\big|.
\eeqas 
where we continue to compute all norms and inner products over $(C,g)$ unless indicated otherwise.

To obtain a lower bound on this quantity, we estimate
\beqa\label{tortoise}
\abs{u_s}^2&=|M_u'(0)|^2+ (u_s-M_u'(0))(u_s-M_u'(0)+2M_u'(0))
\geq \tfrac12 |M_u'(0)|^2- C\abs{u_s-M_u'(0)}^2\\
&\geq \tfrac12 |M_u'(0)|^2- C\abs{u_s-M_u'(s)}^2-C\abs{M_u'(0)-M_u'(s)}^2
\eeqa
and recall that
 \beq
 \label{est:dz-squared-upper-lower}
 c \ell^{-3/2}\leq\norm{\Rea(dz^2)}_{L^2}\leq C \ell^{-3/2}
 \eeq
holds true for some constants $C,c>0$ that depend only on the fixed upper bound $ \ell_0$ on $\ell$ and the fixed numbers $c_{1,2}$ from the definition of the cylinder in \eqref{def:cyl+-}.

We may thus bound 
\beqa\label{est:initial-est-for-Yl}
\norm{P^H_g(\Rea(\Phi))}_{L^2}
&\geq \frac{c}{2}\ell^{-3/2} |M_u'(0)|^2
-I
 \eeqa
for a remainder term $I$
that we can bound, using Lemmas \ref{lemma:ang-energy} and \ref{lemma:Mu-osc},  
as 
\beqas 
I&:=
C \ell^{3/2}\int \big[\abs{u_\th}^2+\abs{u_s-M_u'(s)}^2 +\abs{M_u'(0)-M_u'(s)}^2\big]\rho^{-2} d\th\,ds\\
&\leq C \ell^{3/2}\int_{-Y^-+1}^{Y^+-1}\int_{S^1} (\abs{u_\th}^2+\abs{u_{s\th}}^2)\rho^{-2}\, d\th\, ds +C \ell^{3/2} E_0 
+ C \ell^{3/2} \norm{\Delta_g u}_{L^2}^2\cdot \int \rho^{-1}\,ds 
\\
&\leq  C \ell^{3/2}\cdot (E_0+(1+C\ell^{-2} )\norm{\Delta_g u}_{L^2}^2) \leq C\ell^{3/2}+ C\ell^{-1/2} \norm{\Delta_g u}_{L^2}^2,
 \eeqas
where we used that $\int\rho^{-1}\le C\ell^{-2}$ in the penultimate step.

Inserting this bound into \eqref{est:initial-est-for-Yl} and using 
the lower bound \eqref{est:Mu0-lower} on $\abs{M_u'(0)}$
thus yields
 \beqas
 \norm{P^H_g(\Rea(\Phi))}_{L^2}\geq C \alpha^2 \ell^{1/2}-C\ell^{3/2}-C\ell^{-1/2}\norm{\tens_g u}_{L^2}^2
 \eeqas
as claimed in \eqref{est:hor-lower}.
 So, provided $\bar \ell=\bar \ell(\al,c_1,c_2,E_0)>0$ is chosen sufficiently small, we have that for $\ell\leq \bar \ell$ 
 \beqs
\ell \leq C\big(\norm{P^H_g(\Rea(\Phi))}_{L^2}^2+\ell^{-1} \norm{\tens_g u}_{L^2}^4\big),
 \eeqs
 which easily implies the estimate \eqref{est:upper-ell} claimed in the lemma. Indeed, if 
$\ell^{-1}\norm{\tens_g u}_{L^2}^2\leq 1$, then 
\eqref{est:upper-ell} is a direct consequence of the above estimate, while for $\ell\leq \norm{\tens_g u}_{L^2}^2$ the claimed bound \eqref{est:upper-ell} is trivially satisfied.
\end{proof}
As a next step towards the proof of Lemma \ref{lemma:key-res}, we show
 \begin{lemma}\label{lemma:res-sum-est}
 In the setting of Lemma \ref{lemma:key-res}, the residues of the function $\phi$ describing the Hopf-differential $\Phi=\Phi(u,g)$ may be controlled by
 \beq
 \label{est:sum-res}
 \abs{\sum\res_{P^j}(\phi)}\leq C \big[ \norm{\tens_g u}_{L^2(C,g)}+\exp(-c/\norm{P^H_g(\Rea(\Phi))}_{L^2(C,g)}^2) +\ell^2\norm{\tens_g u}_{L^2(C,g)}^2\big],
 \eeq
while, for any $0<\Lambda\leq L:=Y^++Y^-$ and $q<\sqrt{2}$, furthermore
 \beq
 \label{est:weight-sum-res}
 \abs{\sum e^{\i\th^j}\res_{P^j}(\phi)}\leq C_q e^\Lambda\big(\norm{\tens_g u}_{L^2(C,g)}+ \norm{\tens_g u}_{L^2(C,g)}^2+\norm{P^H_g(\Rea(\Phi))}_{L^2(C,g)}^4+e^{-q\Lambda}\big)
 \eeq
and hence, for $\norm{\De_g u}_{L^2}+\norm{P_g^H(\Rea(\Phi))}_{L^2}\leq M$ for some $M<\infty$,
\beq
\label{est:main-step-res} 
 \abs{\sum e^{\i\th_j}\res_{P^j}(\phi)}\leq C_{q,M} \big[\norm{\tens_g u}_{L^2(C,g)}^{1-1/q}+\norm{P^H_g(\Rea(\Phi))}_{L^2(C,g)}^{4(1-1/q)}
 \big]\eeq 
where as in Lemma \ref{lemma:key-res} the constants $c,C,C_q>0$ depend only on $\alpha>0$, upper bounds $E_0$ and $R$ on the energy and on $\abs{M_u(Y^+)-M_u(-Y^-)}$ and as usual the constants $c_{1,2}$ in the definition of the cylinder, respectively additionally on $q$ (and $M$) for $C_q$ (respectively $C_{q,M}$).
\end{lemma}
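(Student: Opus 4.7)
The proof combines the Cauchy--Pompeiu formula applied to $\phi$ (or to $\phi\cdot g$ with a holomorphic weight $g$) on sub-cylinders of $C$ with the angular-energy decay from Lemma \ref{lemma:ang-energy} and the bound $\ell\le C(\|P_g^H(\Rea(\Phi))\|_{L^2}^2+\|\tens_g u\|_{L^2}^2)$ of Lemma \ref{lemma:ell-upper}. Two preliminary observations are used throughout: first, the stationarity condition forces the trace of $\phi$ on $\{-Y^-\}\times S^1\setminus\{P^j\}$ to be real, so that the residues $\res_{P^j}(\phi)$ are purely imaginary; second, in isothermal coordinates $\partial_{\bar z}\phi=2\partial_z u\cdot \De_{g_0}u$, which, combined with the bound $\rho\sim 1$ near $\{-Y^-\}$, gives $\|\bar\partial\phi\|_{L^1(C)}\le CE_0^{1/2}\|\tens_g u\|_{L^2}$.

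For \eqref{est:sum-res} I would apply Cauchy--Pompeiu with test function $\equiv 1$ on $\Om'=[-Y^-,s_0]\times S^1$, treating the boundary singularities via small half-circles around the $P^j$. Since the trace of $\phi$ is real on the boundary, the contour integral $\int_{\{-Y^-\}\times S^1}\phi\,dz$ is purely imaginary; and as $\sum_j\res_{P^j}(\phi)$ is also purely imaginary, taking the real part of the resulting identity isolates
\begin{equation*}
\bigl|\sum_j\res_{P^j}(\phi)\bigr|\le C\bigl|\int_{S^1}\Ima\phi(s_0,\theta)\,d\theta\bigr|+C\|\tens_g u\|_{L^2}.
\end{equation*}
The slice integral equals $2|\int_{S^1}\langle u_s,u_\theta\rangle\,d\theta|\le 2\|u_s(s_0)\|_{L^2(S^1)}\vartheta(s_0)^{1/2}$. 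One chooses $s_0$ via a Fubini/pigeonhole argument in a suitable sub-interval of $(-Y^-,Y^+)$ around the central geodesic so that $\|u_s(s_0)\|_{L^2(S^1)}\le CE_0^{1/2}$; then \eqref{est:ang-energy-s} gives $\vartheta(s_0)^{1/2}\le CE_0^{1/2}\exp(-qL/4)+C\|\tens\|_{L^2}$, and Lemma \ref{lemma:ell-upper} converts $\exp(-cL)$ into $\exp(-c/\|P_g^H(\Rea(\Phi))\|_{L^2}^2)$ up to the remainder of the form $\ell^2\|\tens\|^2$ arising from the bookkeeping of the mean-value step.

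For \eqref{est:weight-sum-res} I would use the holomorphic weight $g(z):=e^{z+Y^-}$, chosen so that $g(P^j)=e^{i\theta^j}$, and apply Cauchy--Pompeiu to $\phi\cdot g$ on the shallow strip $\Om_\La:=[-Y^-,-Y^-+\La]\times S^1$. The interior $\bar\partial$-term is bounded by $Ce^\La\|\tens\|$, while the PV boundary contour at $\{-Y^-\}$ is analysed via the local Laurent decomposition $\phi(-Y^-,\theta)\sim \sum_j b^j/(\theta-\theta^j)+\phi^{\mathrm{reg}}(\theta)$ (with $b^j=-i\res_{P^j}(\phi)\in\R$). The singular part produces a definite non-trivial multiple of $\sum_j e^{i\theta^j}\res_{P^j}(\phi)$ with a coefficient different from the Cauchy prefactor $\pi i$, so one may solve algebraically for the desired weighted sum; the remaining regular Fourier coefficient $R^{\mathrm{reg}}:=\int\phi^{\mathrm{reg}}(-Y^-,\theta)e^{i\theta}\,d\theta$ is controlled using the ODE
\begin{equation*}
\bigl(\tilde\phi_{-1}(s)e^s\bigr)'=\tfrac{e^s}{\pi}\int_{S^1}\partial_{\bar z}\phi\cdot e^{i\theta}\,d\theta,\qquad \tilde\phi_{-1}(s):=\tfrac{1}{2\pi}\int_{S^1}\phi(s,\theta)e^{i\theta}\,d\theta,
\end{equation*}
which propagates between the inner slice $\{-Y^-+\La\}$ and the boundary, giving $|R^{\mathrm{reg}}|\le Ce^\La(\|\tens\|+|\tilde\phi_{-1}(-Y^-+\La)|)$. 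On the inner slice, Cauchy--Schwarz together with \eqref{est:ang-energy-s} yields $|\tilde\phi_{-1}(-Y^-+\La)|\le C(e^{-q\La}+\|\tens\|)$ for any $q<\sqrt 2$, and a further application of Lemma \ref{lemma:ell-upper} to absorb residual $\rho^2$-prefactors in the middle slice estimate produces the $\|P^H\|^4$ contribution appearing in \eqref{est:weight-sum-res}. Finally \eqref{est:main-step-res} follows from \eqref{est:weight-sum-res} by optimising $\La$ to balance $e^\La\de$ against $e^{(1-q)\La}$, with $\de:=\|\tens\|_{L^2}+\|P_g^H(\Rea(\Phi))\|_{L^2}^4$, yielding the rate $\de^{1-1/q}$.

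The principal technical obstacle is the control of the regular Fourier coefficient $R^{\mathrm{reg}}$ of the smooth part of the boundary trace of $\phi$, which is not directly supplied by the hypotheses; the resolution is the Fourier-mode ODE above, whose driving term $\partial_{\bar z}\phi$ integrates to $\|\tens_g u\|_{L^2}$, together with the systematic use of Lemma \ref{lemma:ell-upper} to re-express residual $\ell$-powers in terms of $\|P_g^H(\Rea(\Phi))\|_{L^2}^4$ and thus obtain the precise form of the estimates.
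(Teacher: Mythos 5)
Your treatment of \eqref{est:sum-res} is essentially sound: applying the Cauchy--Pompeiu formula on $[-Y^-,s_0]\times S^1$ with principal value at the boundary poles, noting that the PV contour integral at $\{-Y^-\}$ is purely imaginary (so its real part vanishes) while $\pi\i\sum\res_{P^j}(\phi)$ is real, and then bounding $\abs{\int\Ima\phi(s_0)}\leq 2\abs{\int\langle u_s,u_\th\rangle}$ via pigeonhole and \eqref{est:ang-energy-s} recovers the paper's estimate by a slightly different contour choice (the paper instead extends $\phi$ by reflection to a meromorphic function on $(-2Y^-,Y^+)\times S^1$ and integrates over rectangles straddling $\{-Y^-\}$, which neatly produces $\phi-\bar\phi$ at an interior slice). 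The optimisation of $\Lambda$ at the end is as in the paper.

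However, your proof of \eqref{est:weight-sum-res} has a genuine gap. You want to control the boundary principal value $\text{PV}\int_{\{-Y^-\}}\phi\,e^z\,dz$ by splitting the boundary trace into a singular part carrying $\sum e^{\i\th^j}\res_{P^j}(\phi)$ and a regular part $R^{\mathrm{reg}}$, and to bound $R^{\mathrm{reg}}$ via the Fourier-mode ODE for $\hat\phi_{-1}(s):=\tfrac{1}{2\pi}\int\phi(s,\th)e^{\i\th}\,d\th$. This does not close, for two reasons. First, the ODE $(\hat\phi_{-1}e^s)'=2e^s(\widehat{\dbar\phi})_{-1}$ is nothing other than the Cauchy--Pompeiu identity in the $k=-1$ Fourier mode; integrating it from $-Y^-$ to $-Y^-+\Lambda$ produces exactly the identity you already obtained, so it is not an independent relation and cannot be used to estimate $R^{\mathrm{reg}}$ separately from $\sum e^{\i\th^j}\res_{P^j}(\phi)$. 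Concretely, the Plemelj jump shows that the interior limit $\lim_{s\to(-Y^-)^+}\hat\phi_{-1}(s)$ equals $\sum_j e^{\i\th^j}\res_{P^j}(\phi)+R^{\mathrm{reg}}$, so the ODE controls only that combination, leaving $R^{\mathrm{reg}}$ entirely uncontrolled (and it need not be small: on the boundary $\phi=|u_s|^2-|u_\th|^2$ is of order $E_0$). Second, the claimed ``coefficient different from $\pi\i$'' for the singular part of the PV integral is not correct with the correct periodic kernel: a direct computation gives $\text{PV}\int_{S^1}\tfrac{e^{\i u}}{2\tan(u/2)}\,du=\i\pi$, so the singular part contributes exactly $\pi\i\,e^{-Y^-}\sum e^{\i\th^j}\res_{P^j}(\phi)$, which combined with the Cauchy contribution doubles the prefactor but does not remove $R^{\mathrm{reg}}$ from the identity.

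The missing idea is precisely the reflection used in the paper: since the trace of $\phi$ on $\{-Y^-\}\times S^1$ is real, one may extend $\phi$ by $\phi(-Y^--s,\th):=\overline{\phi(-Y^-+s,\th)}$ to a meromorphic $2\pi$-periodic function on $(-Y^--L,Y^+)\times S^1$ with poles only at the $P^j$, and then apply Cauchy for $\phi\,e^z$ on the symmetric rectangles $[-Y^--\Lambda-\lambda,-Y^-+\Lambda+\lambda]\times[-\pi,\pi]$. Both contour slices are now strictly interior, the residue sum appears with the full $2\pi\i$ prefactor, and there is no boundary trace term (so no $R^{\mathrm{reg}}$) to control at all; the two interior slices are then estimated by $\int_{I_\Lambda\times S^1}e^s\abs{\phi}$ and treated via Lemmas~\ref{lemma:ang-energy}, \ref{lemma:Mu-osc} and \ref{lemma:ell-upper}, producing the $\norm{P^H_g(\Rea\Phi)}^4$ and $e^{-q\Lambda}$ terms. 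Without this device I do not see how to complete the estimate of \eqref{est:weight-sum-res} along the route you propose.
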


\begin{rmk} \label{rmk:no-upper-bound}
It is useful to observe that the first claim \eqref{est:sum-res} of this lemma still remains valid
if we drop the assumption of an upper bound $R$ on  $\abs{M_u(Y^+)-M_u(-Y^-)}$, while the only adjustment needed for the other claims \eqref{est:weight-sum-res} and \eqref{est:main-step-res} is to replace $\norm{P_g^H(\Rea(\Phi))}_{L^2}^4$ by $\norm{P_g^H(\Rea(\Phi))}_{L^2}^2$.
\end{rmk}

\begin{proof}[Proof of Lemma \ref{lemma:res-sum-est}]

As the function $u$ satisfies the stationarity condition on the boundary curve $\{-Y^-\}\times S^1$ (except at the points $P^j$), we know that the trace of the Hopf-differential on this curve is real so that we can extend 
$\phi$ by $\phi(-Y^--s,\th):=\overline{\phi(-Y^-+s,\th)}$ to a meromorphic function on the cylinder $(-Y^--L,Y^+)\times [-\pi,\pi]$  with poles only at $P^j$. 

We will prove the first two claims of the lemma by applying the Cauchy formula \eqref{eq:Cauchy}  for $\phi$ and for $e^{z}\phi$ on suitable domains and estimating the resulting terms using Lemmas \ref{lemma:ang-energy} and \ref{lemma:Mu-osc}.

To prove \eqref{est:sum-res} we apply \eqref{eq:Cauchy}
for $[-2Y^-+\lambda,-\lambda]\times[-\pi,\pi]$ and integrate over $\lambda \in [0,1]$ to get 
 \beqas
 2\pi\abs{\sum\res_{P_j}(\phi)}\leq& 2\int_{-Y^-}^0\int_{S^1}|\dbar\phi|\,d\th\,ds+\bigg|\int_{0}^1\int_{S^1}\phi(-\lambda,\th)-\bar\phi(-\lambda,\th)\,d\th\,d\la\bigg|\\
 \leq& C\norm{\tens_{g_0} u}_{L^2(C,g_0)}+C\bigg|
 \int_{[-1,0]\times S^1}\langle u_s,u_\th\rangle \,d\th\,ds\bigg|\\
 =& C\norm{\tens_{g_0} u}_{L^2(C,g_0)}+C\bigg| \int_{[-1,0]\times S^1}\langle u_s-M_u'(s),u_\th\rangle\, d\th\,ds\bigg|\\
 \leq & 
  C\norm{\tens_g u}_{L^2(C,g)}+C\bigg(\int_{-1}^0\vartheta(s)\,ds\bigg)^{1/2}\cdot\bigg( \int_{[-1,0]\times S^1} \abs{u_{s\th}}^2 \,d\th\, ds\bigg)^{1/2}
 \eeqas 
where the last step follows by using Wirtinger's inequality as well as the uniform upper bound $\rho\leq C(c_1,\ell_0)$ on the conformal factor.

Thanks to Lemma \ref{lemma:ang-energy},
we may thus bound for every $q<\sqrt{2}$,
 \beqas
\abs{\sum\res_{P_j}(\phi)}\leq&
  C\big(\norm{\tens_g u}_{L^2}+e^{-q Y^-}+\ell^2\norm{\tens_g u}_{L^2}^2\big)
 \eeqas 
 where here and in the following norms are computed over $(C,g)$ unless indicated otherwise. 
 If $\norm{P^H_g(\Rea(\Phi))}_{L^2}\geq \norm{\tens_g u}_{L^2}$, then Lemma \ref{lemma:ell-upper} tells us that $Y^-\geq c\ell^{-1}\geq c\norm{P^H_g(\Rea(\Phi))}_{L^2}^{-2}$.
 If instead $\norm{\tens_g u}_{L^2}\geq\norm{P^H_g(\Rea(\Phi))}_{L^2}$, the same lemma yields that $Y^-\geq c\norm{\tens_g u}_{L^2}^{-2}$ and so certainly $e^{-q Y^-}\leq C\norm{\tens_g u}_{L^2}$. 
 In either case, we have that 
 \beqs
 \abs{\sum\res_{P_j}(\phi)}\leq C\big(\norm{\tens_g u}_{L^2}+e^{-c/\norm{P^H_g(\Rea(\Phi))}_{L^2}^2}+\ell^2\norm{\tens_g u}_{L^2}^2\big). \eeqs
 This concludes the proof of \eqref{est:sum-res}.
 
 To prove \eqref{est:weight-sum-res}, we argue similarly, integrating now  $\phi \,e^z$, $z=s+\i\th$,   over the rectangles  $[-Y^--\Lambda-\lambda,-Y^-+\Lambda+\lambda]\times[-\pi,\pi]$, $\lambda\in[-1,0]$, where $\Lambda\in (0, Y_--1)$ is any fixed number.
Writing for short $I_\La:= [-Y^-+\La-1,-Y^-+\La]$, this yields
  \beqa \label{est:hallo-cookies}
 \abs{\sum\res_{P_j}(\phi)\cdot e^{\i\th_j}}\leq&C e^\Lambda\norm{\tens_g u}_{L^2}+2\int_{I_\La\times S^1}e^s \abs{\phi(s,\th)} \,d\th\,ds\\
\leq & C e^\Lambda \norm{\De_g u}_{L^2}
+Ce^\La \int_{I_\La} \vartheta(s)\,ds +
 Ce^\La \int_{I_\La\times S^1}\abs{u_s}^2 \,d\th\,ds\\
 \leq & C e^\Lambda\norm{\tens_g u}_{L^2}+Ce^{-(q-1)\La}+Ce^\La \norm{\De_g u}_{L^2 }^2+C e^\La J,
 \eeqa
 where we applied Lemma \ref{lemma:ang-energy} in the last step and where
 $J:=\int_{I_\La\times S^1} \abs{u_s}^2 \,d\th\,ds.$
 As in \eqref{tortoise}, we now split
 \beqs
\abs{u_s}^2\leq C |M_u'(0)|^2 + C\abs{u_s-M_u'(s)}^2+C\abs{M_u'(0)-M_u'(s)}^2
\eeqs
into terms that can be bounded using Lemmas \ref{lemma:ang-energy}, \ref{lemma:Mu-osc}. Combined with Lemma \ref{lemma:ell-upper} we thus get
 \beqa \label{est:squirrel}
J \leq& C\int_{I_\La\times S^1}|u_{s\th}|^2 +|M_u'(s)-M_u'(0)|^2\,d\th\,ds+C(R^2 \ell^2+\norm{\tens_gu}_{L^2}^2)\\
 \leq&C\big(\norm{\tens_g u}_{L^2}^2+\norm{P^H_g(\Rea(\Phi))}_{L^2}^4+e^{-q\Lambda}\big),
 \eeqa
where $C=C(E_0,\al, c_{1,2}, R)$. 
Inserted into \eqref{est:hallo-cookies} this gives the second claim \eqref{est:weight-sum-res} of the lemma.

To obtain the final claim, we first note that if $\de:=\norm{\tens_g u}_{L^2}+\norm{P^H_g(\Rea(\Phi))}_{L^2}^4$
is no less than some fixed  number $\de_0=\de_0(c_{1,2},\ell_0,E_0)>0$ chosen later, then we can choose $\La=1$ and obtain \eqref{est:main-step-res} immediately from \eqref{est:weight-sum-res}.
Conversely, for $\de<\de_0$ we may choose $\Lambda=-1/q\log\de$ so that 
$e^\Lambda\de=e^{\Lambda(1-q)}= \de^{1-\frac{1}{q}}$ as, for $\de_0=\de_0(c_{1,2},\ell_0,E_0)$ suitably small,
Lemma \ref{lemma:ell-upper} ensures that $L\geq c\sqrt{\de }\geq \La$. 
Hence, by \eqref{est:weight-sum-res}, 
$$
\babs{\sum e^{\i\th^j} \res_{P^j}(\phi)}\leq C_q  \de (1+\norm{\De_g u}_{L^2}) e^{\La}+ Ce^{
(1-q)\La}\leq C_{q,M}  \de^{1-\frac{1}{q}}$$
which gives the final claim of the lemma.
\end{proof}

We note that the only adjustment needed in the above argument in a situation where we drop the assumption of a uniform upper bound $R$ on $\abs{M_u(Y^+)-M_u(-Y^-)}$ as considered in Remark \ref{rmk:no-upper-bound}
is that we have a  weaker upper bound on $\abs{M_u'(0)}$ now given by \eqref{est:Mu0-upper2} instead of \eqref{est:Mu0-upper} so that in \eqref{est:squirrel} and the subsequent estimates we need to replace $\norm{P^H_g(\Rea(\Phi))}_{L^2}^4$ by $\norm{P^H_g(\Rea(\Phi))}_{L^2}^2$.

Based on this lemma, we can now finally establish Lemma \ref{lemma:key-res}
which played a key role in the proof of Theorem \ref{thm:B}.
\begin{proof}[Proof of Lemma \ref{lemma:key-res}]
We first remark that \eqref{claim:sum-res} was already proven in Lemma \ref{lemma:res-sum-est}. To prove the other claims, we  may assume without loss of generality that 
$\th^2=0$, so $\th^1=-\eps_1$, $\th^3=\eps_2$ for $0<\eps_1<\eps_2<2\pi-(\eps_1+\eps_2)$. 
We set
$$
 A_{\eps_1,\eps_2}
 =\begin{pmatrix} 
1 & 1 & 1 \\
\cos(-\eps_1) & 1 & \cos(\eps_2) \\
\sin(-\eps_1) & 0 & \sin(\eps_2)
\end{pmatrix} 
 $$ 
 and note that Lemma \ref{lemma:res-sum-est} gives bounds of $ C_{q,M}  \de^{1-\frac1q}$ for each of the components of the vector $A_{\eps_1,\eps_2}\cdot (\res_{P^1}(\phi), \res_{P^2}(\phi),\res_{P^3}(\phi))^T$ since the trace of $\phi$ on $\{-Y^-+\i\th\}\times S^1$ is real, and hence the residues
are purely imaginary. Here and in the following, $q<\sqrt{2}$ and $ \de:= \norm{\De_g u}_{L^2}+\norm{P^H_g(\Rea(\Phi))}_{L^2}^4$.

As 
 $\abs{\det(A_{\eps_1,\eps_2})}\geq c\eps_1\eps_2^{2}>0$ and 
 $$
 A_{\eps_1,\eps_2}^{-1}=\det(A_{\eps_1,\eps_2})^{-1} 
 \cdot \begin{pmatrix} 
\sin(\eps_2) & -\sin(\eps_2) & \cos(\eps_2)-1 \\
-\sin(\eps_1+\eps_2) & \sin(\eps_1)+\sin(\eps_2) & -\cos(\eps_2)+\cos(\eps_1) \\
\sin(\eps_1) & -\sin(\eps_1) & -\cos(\eps_1)+1
\end{pmatrix} ,
$$
we have 
$$\abs{(A_{\eps_1,\eps_2}^{-1})_{ij}}\leq C\eps_1^{-1} \eps_2^{-1} \text{ for } i=1,2 \text{ and } j=1,2,3$$
while 
$$\abs{(A_{\eps_1,\eps_2}^{-1})_{ij}}\leq C\eps_2^{-2} \text{ for } i=3 \text{ and } j=1,2,3$$
so the claimed estimate \eqref{claim:res} on the residues immediately follows. 

To obtain  \eqref{claim:eps-sum-res}
we finally note that, by 
Lemma \ref{lemma:res-sum-est},  
$\vert\sin(-\eps_1)\res_{P^1}(\phi) +\sin(\eps_2)\res_{P^3}(\phi)\vert\leq C \de^{1-\frac1q}$, so we obtain that 
 \beqas
 \abs{\eps_2\res_{P^3}(\phi)-\eps_1\res_{P^1}(\phi)}\leq & 
 \abs{\sin(\eps_2)\res_{P^3}(\phi)-\sin(\eps_1)\res_{P^1}(\phi)}+
 C\eps_1^3 \abs{\res_{P^1}(\phi)}+C \eps_2^3\abs{\res_{P^3}(\phi)}\\
 &\leq C\de^{1-\frac1q} 
 \eeqas
 as claimed in the lemma. 
 \end{proof}

For the sake of completeness, we finally include proofs of the auxiliary Lemmas \ref{lemma:ang-energy} and \ref{lemma:Mu-osc}.

\begin{proof}[Proof of Lemma \ref{lemma:ang-energy}]
We follow closely the arguments of \cite[Lemma 3.7]{RT3} and \cite[Lemma 2.5]{HRT}, which further simplify as our target manifold is $\mathbb{R}^n$. As in these papers, we first note that 
\beqs
 \vartheta''(s)=2\int_{\{s\}\times S^1}|u_{s\th}|^2+|u_{\th\th}|^2-u_{\th\th}\cdot\tens_{g_0} u. 
\eeqs
For any fixed $q<\sqrt{2}$, we thus have, applying  Wirtinger's inequality in the second step,
\beqas
 \vartheta''(s)\geq& q^2\int_{\{s\}\times S^1}|u_{\th\th}|^2-C_q\int_{\{s\}\times S^1}|\tens_{g_0} u|^2
 \geq q^2\,\vartheta(s)-C_q\int_{\{s\}\times S^1}|\tens_{g_0} u|^2
\eeqas
so that the first claim \eqref{est:ang-energy-s} follows from the  maximum principle.

To bound the weighted integral of $\vartheta$ considered in \eqref{est:ang-energy-weight} we observe that 
 $\int e^{-q\abs{s-t}} \rho^{-2}(s)\, ds\leq C \rho^{-2}(t)$ for any $s,t\in[-Y^-,Y^+]$. 
Integrating \eqref{est:ang-energy-s} and using Fubini thus allows us to bound  
\beqa \label{est:ice-cream}
 \int\rho^{-2}|u_\th|^2\,d\th\,ds\leq& CE_0 (\rho^{-2}(Y^+) +\rho^{-2}(-Y^-))+C\int \rho^{-2}\abs{\De_{g_0}u}^2\, dv_{g_0}
 \\
 \leq&CE_0+C\norm{\tens_gu}_{L^2(C,g)}^2
\eeqa
as claimed, 
where integrals are always computed over the whole of $C$ unless indicated otherwise
and where 
we used in the last step that the conformal factor is bounded away from zero uniformly at the ends of the cylinder. 
We note that the same argument, using now also that $\rho$ is increasing in $\abs{s}$, implies the bound on the integral of $\vartheta$ claimed in \eqref{est:u-s-theta}. 

To obtain the claimed estimates for $u_{s\th}$, we 
let $\eta=\eta(s)\in C_c^{\infty}([-\La-1,\La+1],[0,1])$ be a smooth cut-off function, chosen so that $\eta \equiv 1$ on $[-\La,\La]$ and $\norm{\eta'}_{L^{\infty}}\leq 2$. As $\abs{(\rho^{-2})'}\leq 2\rho^{-1}\leq C\rho^{-2}$, we can then bound 
 \beqas
 I&:=\int\eta^2\rho^{-2}|u_{s\th}|^2\,d\th\,ds
 =-\int u_\th\cdot\partial_s(\eta^2\rho^{-2}u_{s\th})\,d\th\,ds\\
  &\leq \half I +
  C \int\rho^{-2}|u_\th|^2d\th\,ds -\int u_\th u_{ss\th}\rho^{-2}\eta^2\,d\th\,ds,
 \eeqas
so that, by \eqref{est:ice-cream},
 \beqas
I&\leq CE_0+C\norm{\De_g u}^2_{L^2(C,g)}-2\int u_\th u_{ss\th}\rho^{-2}\eta^2\,d\th\, ds \\
&=CE_0+C\norm{\De_g u}^2_{L^2(C,g)} -2 \int u_\th\cdot \partial_\th (\De_{g_0} u-u_{\th\th}) \rho^{-2} \eta^2 \,d\th\,ds 
\\
&=CE_0+C\norm{\De_g u}^2_{L^2(C,g)} +2\int \big( u_{\th\th} \Delta_{g_0} u - \abs{u_{\th\th}}^2) \rho^{-2}\eta^2 \,d\th\,ds
\\
&\leq CE_0+C\norm{\De_g u}^2_{L^2(C,g)}+\int \abs{\De_{g_0} u}^2 \rho^{-2} \,d\th\,ds =C E_0+ C\norm{\De_g u}_{L^2(C,g)}^2
\eeqas
as claimed in 
 \eqref{est:ang-energy-weight}.
 Finally, the estimate for $\int \abs{u_{s\th}}^2$ 
 may be obtained by the same argument, which in this situation simply yields an additional factor of $\sup_{\abs{s}\leq \La+1}\rho^2(s)\leq C\rho^2(\La)$. 
\end{proof}

We finally include a brief proof of the auxiliary Lemma \ref{lemma:Mu-osc}.
\begin{proof}[Proof of Lemma \ref{lemma:Mu-osc}]
As $\int_0^s \rho^{2}=\frac{\ell}{2\pi} \tan(s)\leq \rho(s)$ and 
$M''_u(s)= (2\pi)^{-1}\int_{\{s\}\times S^1}\tens_{g_0} u$ 
 we have that for every $s_0\in[-Y^-,Y^+]$
 \beqas
 \abs{M_u'(s_0)-M_u'(0)}=
 (2\pi)^{-1}\bigg|\int_0^{s_0}\int_{S^1}\tens_{g_0} u\,d\th\,ds\bigg|
 \leq (2\pi)^{-1/2}\rho(s_0)^{1/2}
 \norm{\tens_g u}_{L^2(C,g)},
 \eeqas
 as claimed. 
  To show the second claim, we observe that the assumption \eqref{ass:MV-lower} ensures that
 \beqs
 \alpha\leq\bigg|\int_{-Y^-}^{Y^+}M_u'(s)\,ds\bigg|\leq L|M_u'(0)|+\int_{-Y^-}^{Y^+}|M_u'(s)-M_u'(0)|\,ds
 \eeqs
so that the claimed lower bound on $\abs{M_u'(0)}$ follows from the fact that 
$L=Y^++Y^-\leq \frac{c}{\ell}$, 
the above estimate, and the fact that 
 \beqas
 \int_{-Y^-}^{Y^+}\rho^{1/2}ds \leq 2 \bigg(\frac{\ell}{2\pi}\bigg)^{-1/2}
 \int_0^{\frac{\pi}{2}}\cos^{-1/2}(t)\,dt \leq
 C\ell^{-1/2}.
 \eeqas
 By the same argument, an upper bound on $\abs{M_u(Y^+)-M_u(-Y^-)}$ yields \eqref{est:Mu0-upper}, while more generally we can bound 
  \beqas
 L \abs{M_u(0)}\leq(2\pi)^{-1}\babs{ \int u_s \,d\th\, ds}+\int_{-Y^-}^{Y^+}|M_u'(s)-M_u'(0)|\,ds
 \leq CE_0 L^{1/2}+C\ell^{-1/2}\norm{\De_gu}_{L^2},
 \eeqas
 so that \eqref{est:Mu0-upper2} holds. 
\end{proof}

\appendix
\section{ }
\subsection{Remarks on the flow}
\label{subsect:app-flow}
$ $\\
In this appendix, we explain why the results of \cite{R-cyl} remain valid for general coupling functions as considered in the present paper. 

We first note  
as in \cite{R-cyl}, that also for general coupling functions $\eta_\pm$, the evolution equation for the metric reduces to a system of 7 ordinary differential equations (coupled with the equation \eqref{eq:fluss-map} for the map). Short-time existence of solutions can hence be obtained exactly as in \cite[Section 3]{R-cyl} and solutions exist for as long as $\ell$ remains bounded away from zero and $(b,\phi)$ remain in a compact region of the parameter domain.

We then recall that the variations of the metric induced by changes of the parameters $(b^\pm,\phi^\pm)$ are supported in 
fixed compact regions of $C^\pm$, to be more precise in $\{\pm s\in [\half,1]\}$. Hence the delicate argument of \cite[Lemma 4.4]{R-cyl} (which is based on \cite{RT3}) that prevents $\ell\to 0$ in finite time applies without change also for the flow \eqref{eq:fluss-metric}, \eqref{eq:fluss-map} with general coupling functions, 
 as this analysis is carried out only on the central part of the cylinder.

The argument of \cite[Lemma 4.1]{R-cyl} that $(b,\phi)$ remains in a compact set of $\Om^2$ also applies with only minor changes as $\eta$ is assumed to be bounded above. Indeed, we recall that the generating vector field $Y_{\abs{b^+}}$ is so that 
\beq\label{eq:wolf}
\norm{L_{Y_{\abs{b^+}}}g}_{L^2(C_0,g)}\geq C(1-|b^+|)^{-1}.
\eeq
We note that while we would need this only for $|b^+|$ close to 1, and that this is the range of $\abs{b^+}$ for which \eqref{eq:wolf} was proven in \cite{R-cyl}, a short calculation shows that \eqref{eq:wolf} is indeed true for all values of $|b^+|\in[0,1)$. So, with $\La_\pm:=\sup\eta_\pm$, we have that
\beqas
|\tfrac{d}{dt}|b^+||\cdot\norm{L_{Y_{\abs{b^+}}}g}_{L^2(C_0,g)}\leq& \norm{P^{\mathcal{V}^+}_g(\partial_t g)}_{L^2(C_0,g)}=\tfrac{1}{4}\eta_+^2\cdot\norm{P^{\mathcal{V}^+}_g(\Rea(\Phi(u,g)))}_{L^2(C_0,g)}\\
\leq& \tfrac{\La_+}{4} \eta_+\norm{P^{\mathcal{V}^+}_g(\Rea(\Phi(u,g)))}_{L^2(C_0,g)},
\eeqas
excluding the possibility that $|b^\pm|\to 1$ in finite time, compare also Section \ref{subsec:asympt}. Likewise, the bound on $\frac{d}{dt}\phi^\pm$ in \cite[Lemma 4.1]{R-cyl} changes only by a factor of $\La_{\pm}$.

Having thus explained why the arguments of \cite{R-cyl} yield long-time existence of solutions to \eqref{eq:fluss-metric}, \eqref{eq:fluss-map} for any fixed coupling functions $\eta_\pm$, we finally remark that the asymptotic analysis of \cite[Theorem 2.7]{R-cyl} in the case that the three-point condition does not degenerate is also unaffected by the choice of coupling function as, in this case, $|b^\pm(t_i)|$ is contained in a compact subset of $[0,1)$ so that $\eta_\pm(t_i)$ is bounded away from $0$. Along a sequence of times $t_i$ as considered in \cite[Theorem 2.7]{R-cyl}, we hence still obtain that $$\norm{\Delta_{g_i} u(t_i)}_{L^2}+\norm{P^H_{g_i}(\Rea(\Phi(t_i)))}_{L^2}+\norm{P_{g_i}^{\mathcal{V}^\pm}(\Rea(\Phi(t_i)))}_{L^2}\to0$$ so that the proof of \cite[Theorem 2.7]{R-cyl} applies without change.

\subsection{Remarks on the stationarity condition and on almost meromorphic functions}
\label{subsect:app-stat}
$ $\\
In this part of the appendix, we present some of the properties and uses of the stationarity condition \eqref{eq:stationar} that was used extensively throughout the paper. We first recall that the stationarity condition can be thought of as a weak formulation of the condition that the function $\phi$ defining the Hopf-differential $\Phi=\phi\, (ds+\i d\th)^2$ of a map $u:C_0\to \mathbb{R}^n$ is real on the boundary. Indeed, if $u\in H^2(C_0,g)$, then \eqref{eq:stationar} is equivalent to
$\int_{\partial C_0}\Rea(\Phi)(\frac{\partial}{\partial s},X)\,d\th =0$
by Stokes' Theorem. So, as
$\Rea(\Phi)=\Rea(\phi)(ds^2-d\th^2)-\Ima(\phi)(ds\otimes d\th+d\th\otimes ds),$
the stationarity condition reduces to
$\int_{\partial C_0}\Ima(\phi)X^\th\,d\th=0$.

As all limit maps we obtain are harmonic, it is useful to observe that in this case, the stationarity condition reduces to
\beq\label{eq:harm-stat}
\int L_Xg\cdot\Rea(\Phi) \, dv_g=0,
\eeq
and that we have the following.

\begin{rmk}
\label{lemma:no-poles}
Let $\Phi=\phi\,dz^2$ be a quadratic differential described by a function $\phi\in L^1(D_r^+)\cap W^{1,1}_{loc}(D_r^+\setminus\{0\})$ such that $\dbar\phi=0$. As observed above, if \eqref{eq:harm-stat} is satisfied for all $X\in\Gamma(TD_r^+)$ with $X(0)=0$, then $\phi$ is real on $D_r^+\cap\{s=0\}\setminus\{0\}$, so can be reflected to give a meromorphic function $\phi$ on $D_r$ with only a possible pole at $0$. Moreover, if \eqref{eq:harm-stat} is satisfied for all $X\in\Gamma(TD_r)$, then $\res_0(\phi)=0$, and hence $\phi$ is holomorphic.
\end{rmk}

\begin{proof}
To see that $\res_0(\phi)=0$, we apply \eqref{eq:harm-stat} for $X=\eta\cdot\frac{\partial}{\partial\th}$, where $\eta$ is a cut-off function such that $\eta\equiv1$ on $D_{r/2}^+$. Now Stokes' Theorem and \eqref{eq:harm-stat} imply
 \beqs
 \big|\int_{\{s=\eps\}}X^\th\Ima(\phi)\,d\th\big|=\big|\int_{\{s>\eps\}} L_X g\cdot \Rea(\Phi)\,dv_g\big|=\big|\int_{\{s\leq\eps\}} L_X g\cdot \Rea(\Phi)\,dv_g\big|\to 0 \text{ as $\eps\to 0$.}
 \eeqs
 Conversely, as $\phi$ is integrable, it cannot have a pole of order two or higher, and so we can write
 $\phi(z)=\frac{a\i}{z}+\psi(z)$, where $\psi(z)$ is holomorphic and so bounded. As $\phi$ is real on $\{s=0\}$, we must have $a\in\mathbb{R}$, so $\Ima(\phi)=\frac{as}{|s+\i\th|^2}+\Ima(\psi(z))$ and $\Ima(\psi)|_{\{s=0\}}=0$. Thus 
 \beqa
 \lim_{\eps\to 0}\big|&\int_{\{s=\eps\}}X^\th\Ima(\phi)\,d\th\big|= \lim_{\eps\to0}|a|\big|\int \eta\frac{\eps}{|\eps+\i\th|^2}\,d\th\big|\geq\lim_{\eps\to0} |a|\eps\int_{-\eps}^{\eps}\frac{1}{\th^2+\eps^2}\,d\th=\tfrac{\pi}{2}|a|,
 \eeqa
 hence $\abs{a}=\abs{\res_{0}(\phi)}=0$.
\end{proof}

Finally, for the sake of completeness, we provide 
 \begin{proof}[Sketch of proof of estimate \eqref{est:inhom_Cauchy}
 from Lemma \ref{lemma:cauchy}]

A short calculation, based on the generalised Cauchy formula, establishes that there exists a number $\de_0>0$ such that 
for every $0<\de<\de_0$,
  \beq\label{est:lion}
\int_{\Om'}\bigg|\frac{\res_{p^j}(\phi)}{w-p^j}-
\frac{1}{2\pi i}\int_{\partial D_{\de}(p^j)}\frac{\phi(z)}{w-z}\,dz
\bigg|\,dv_w \leq C\de|\log\de|\int_{\partial D_\de(p^j)}|\phi|\,ds +C|m^j(\de)|,
  \eeq
  where 
  $C$ is a 
  universal constant and where 
  $m^j(\de):=\res_{p^j}(\phi)-\frac{1}{2\pi i}\int_{\partial D_\de}\phi\, dz\to 0$ as $\de\to 0$. Estimate \eqref{est:inhom_Cauchy} is then an immediate consequence of this bound, the integrability of $\de\mapsto\int_{\partial D_\de(p^j)}|\phi|\,ds$ and the fact that $\de \mapsto (\de \abs{\log(\de)} )^{-1}$ is not integrable near zero.
\end{proof}

{\sc Melanie Rupflin and Matthew R. I. Schrecker,\\ 
Mathematical Institute, University of Oxford, Oxford, OX2 6GG, UK}

\end{document}